\documentclass{amsart}

\usepackage[T1]{fontenc}
\usepackage{enumerate, amsmath, amsfonts, amssymb, amsthm, thmtools, mathrsfs, wasysym, graphics, graphicx, xcolor, url, hyperref, hypcap, xargs, multicol, pdflscape, multirow, hvfloat, array, ae, aecompl, pifont, mathtools, a4wide, float, blkarray, overpic, nicefrac, etoolbox, stmaryrd, shuffle}
\usepackage[bb=boondox]{mathalfa}
\usepackage[shortlabels, inline]{enumitem}
\usepackage[noabbrev,capitalise]{cleveref}
\usepackage[normalem]{ulem}
\usepackage{marginnote}
\hypersetup{colorlinks=true, citecolor=darkblue, linkcolor=darkblue}
\usepackage[all]{xy}
\usepackage{tikz}
\usepackage{tikz-cd}
\usetikzlibrary{trees, decorations, decorations.pathmorphing, decorations.markings, decorations.shapes, shapes, arrows, matrix, calc, fit, intersections, patterns, angles}
\graphicspath{{figures/}{figures/nodes/}}
\makeatletter\def\input@path{{figures/}}\makeatother
\usepackage{caption}
\captionsetup{width=\textwidth}
\usepackage[export]{adjustbox}


\newtheorem{theorem}{Theorem}[section]
\newtheorem{corollary}[theorem]{Corollary}
\newtheorem{proposition}[theorem]{Proposition}
\newtheorem{lemma}[theorem]{Lemma}

\newtheorem*{theorem*}{Theorem}

\theoremstyle{definition}
\newtheorem{definition}[theorem]{Definition}
\newtheorem{example}[theorem]{Example}
\newtheorem{remark}[theorem]{Remark}

\crefname{equation}{Equation}{Equations}

\newcommand{\R}{\mathbb{R}} 
\newcommand{\f}[1]{{\mathfrak{#1}}} 
\renewcommand{\c}[1]{{\mathcal{#1}}} 
\renewcommand{\b}[1]{{\boldsymbol{#1}}} 

\newcommand{\set}[2]{\left\{ #1 \;\middle|\; #2 \right\}} 
\newcommand{\bigset}[2]{\big\{ #1 \;\big|\; #2 \big\}} 
\newcommand{\ssm}{\smallsetminus} 
\newcommand{\one}{{\mathbb{1}}} 
\newcommand{\shiftedShuffle}{\,\bar\shuffle\,} 

\newcommand{\eqdef}{\mbox{\,\raisebox{0.2ex}{\scriptsize\ensuremath{\mathrm:}}\ensuremath{=}\,}} 

\DeclareMathOperator{\inv}{inv} 
\DeclareMathOperator{\des}{des} 
\DeclareMathOperator{\tors}{\mathsf{tors}}

\newcommand{\ie}{\textit{i.e.}~} 
\newcommand{\apriori}{\textit{a~priori}~} 
\definecolor{darkblue}{rgb}{0,0,0.7} 
\definecolor{green}{RGB}{57,181,74} 
\definecolor{violet}{RGB}{147,39,143} 
\newcommand{\red}{\color{red}} 
\newcommand{\darkblue}{\color{darkblue}} 
\newcommand{\defn}[1]{\textsl{\darkblue #1}} 

\usepackage{todonotes}

\newcommandx{\polytope}[1][1=P]{\mathsf{#1}} 
\newcommandx{\Perm}[1][1=n]{\polytope[Perm]_{#1}} 
\newcommandx{\Asso}[1][1=n]{\polytope[Asso]_{#1}} 
\newcommandx{\Para}[1][1=n]{\polytope[Para]_{#1}} 
\newcommandx{\Zono}[1][1=\digraph]{\polytope[Zono]_{#1}} 
\newcommandx{\Quotientope}[1][1=\equiv]{\polytope[Quot]_{#1}} 
\newcommandx{\Fan}[1][1=F]{\mathcal{#1}} 
\newcommandx{\braidFan}[1][1=n]{\Fan_{#1}} 
\newcommandx{\quotientFan}[1][1=\equiv]{\c{F}_{#1}} 
\newcommand{\HA}{\mathcal{H}} 
\newcommand{\hyp}{\mathbb{H}} 
\newcommandx{\ray}[1][1=r]{\boldsymbol{#1}} 
\newcommandx{\Cone}[1][1=C]{\polytope[#1]} 

\newcommand{\arc}{\alpha} 
\newcommand{\arcs}{{\mathcal{A}}} 
\newcommand{\meet}{\wedge} 
\newcommand{\join}{\vee} 
\newcommand{\decoration}{{\b{\delta}}} 
\newcommand{\includeSymbol}[1]{\ensuremath{%
	\mathchoice
		{\raisebox{-.7mm}{\includegraphics[height=2.2ex]{#1}}}	
		{\raisebox{-.7mm}{\includegraphics[height=2.2ex]{#1}}}
		{\raisebox{-.6mm}{\includegraphics[height=1.6ex]{#1}}}
		{\raisebox{-.5mm}{\includegraphics[height=1ex]{#1}}}
}}
\robustify{\includeSymbol}
\newcommand{\noneCirc}{\includeSymbol{none}}
\newcommand{\upCirc}{\includeSymbol{up}}
\newcommand{\downCirc}{\includeSymbol{down}}
\newcommand{\upDownCirc}{\includeSymbol{updown}}
\newcommand{\Decorations}{\{\noneCirc{}, \downCirc{}, \upCirc{}, \upDownCirc{}\}} 
\newcommand{\projDown}{\smash{\pi^{\equiv}_{\!\downarrow\!}}} 
\newcommand{\projUp}{\smash{\pi_{\equiv}^{\!\uparrow\!}}} 
\newcommand{\arcDown}{\smash{\alpha_{\downarrow\!}}} 
\newcommand{\arcUp}{\smash{\alpha^{\uparrow\!}}} 
\newcommandx{\arcDiagramDown}[1][1=\equiv]{\smash{\delta^{#1}_{\downarrow\!}}} 
\newcommandx{\arcDiagramUp}[1][1=\equiv]{\smash{\delta_{#1}^{\uparrow\!}}} 

\newcommand{\RA}{RA} 


\makeatletter
\def\part{\@startsection{part}{1}%
\z@{.7\linespacing\@plus\linespacing}{.8\linespacing}%
{\LARGE\sffamily\centering}}
\makeatother


\setcounter{tocdepth}{3}
\makeatletter
\def\l@section{\@tocline{1}{5pt}{0pc}{}{}}
\makeatother
\let\oldtocpart=\tocpart
\renewcommand{\tocpart}[2]{\sc\large\oldtocpart{#1}{#2}}
\let\oldtocsection=\tocsection
\renewcommand{\tocsection}[2]{\bf\oldtocsection{#1}{#2}}
\let\oldtocsubsubsection=\tocsubsubsection
\renewcommand{\tocsubsubsection}[2]{\quad\oldtocsubsubsection{#1}{#2}}


\title{Separating trees and simple congruences of the weak order}

\thanks{
JCN was partially supported by the project CARPLO of the Agence Nationale de la recherche (ANR-20-CE40-0007).
VP was partially supported by the Spanish project PID2022-137283NB-C21 of MCIN/AEI/10.13039/501100011033 / FEDER, UE, by the Spanish--German project COMPOTE (AEI PCI2024-155081-2 \& DFG 541393733), by the Severo Ochoa and María de Maeztu Program for Centers and Units of Excellence in R\&D (CEX2020-001084-M), by the Departament de Recerca i Universitats de la Generalitat de Catalunya (2021 SGR 00697), and by the French--Austrian project PAGCAP (ANR-21-CE48-0020 \& FWF I 5788).
}

\author{Emily Barnard}
\address[Emily Barnard]{DePaul University, Chicago, U.S.A.}
\email{e.barnard@depaul.edu}
\urladdr{\url{https://emilybarnard.github.io}}

\author{Jean-Christophe Novelli}
\address[Jean-Christophe Novelli]{LIGM, Universit\'e Gustave-Eiffel, CNRS,
ENPC, ESIEE-Paris \\
5 Boulevard Descartes \\Champs-sur-Marne \\77454 Marne-la-Vall\'ee cedex 2 \\
FRANCE}
\email{jean-christophe.novelli@univ-eiffel.fr}
\urladdr{\url{https://igm.univ-mlv.fr/~novelli/}}

\author{Vincent Pilaud}
\address[Vincent Pilaud]{Universitat de Barcelona \& Centre de Recerca Matemàtica, Barcelona, Spain}
\email{vincent.pilaud@ub.edu}
\urladdr{\url{https://www.ub.edu/comb/vincentpilaud/}}


\begin{document}

\begin{abstract}
A congruence of the weak order is simple if its quotientope is a simple polytope.
We provide an alternative elementary proof of the characterization of the simple congruences in terms of forbidden up and down arcs.
For this, we provide a combinatorial description of the vertices of the corresponding quotientopes in terms of separating trees.
This also yields a combinatorial description of all faces of the corresponding quotientopes.
We finally explore algebraic aspects of separating trees, in particular their connections with quiver representation theory.
\end{abstract}

\maketitle

\tableofcontents

\pagebreak


\section{Introduction}
\label{sec:introduction}

Lattice congruences of the weak order of a Coxeter group were pioneered by N.~Reading~\cite{Reading-CambrianLattices} in connection to the combinatorics of finite type cluster algebras~\cite{FominZelevinsky-ClusterAlgebrasI,FominZelevinsky-ClusterAlgebrasII}.
For the symmetric group, N.~Reading described the combinatorics of a lattice congruence~$\equiv$ of the weak order in terms of noncrossing arc diagrams~\cite{Reading-arcDiagrams}.
He also studied in~\cite{Reading-HopfAlgebras} the quotient fan~$\quotientFan$ obtained by coarsening the braid arrangement according to the congruence classes of~$\equiv$.
Moreover, V.~Pilaud and F.~Santos~\cite{PilaudSantos-quotientopes, PadrolPilaudRitter} constructed the quotientope~$\Quotientope$, whose normal fan is the quotient fan~$\quotientFan$, and whose oriented skeleton is the quotient~$\f{S}_n/{\equiv}$.
The most classical congruence of the weak order is certainly the sylvester congruence~\cite{Tonks, HivertNovelliThibon-algebraBinarySearchTrees}, whose quotient is the Tamari lattice~\cite{Tamari}, and whose quotientope is the associahedron of~\cite{ShniderSternberg, Loday, PilaudSantosZiegler}.

A congruence~$\equiv$ is called simple when the Hasse diagram of the quotient~$\f{S}_n/{\equiv}$ is regular, or equivalently when the quotient fan~$\quotientFan$ is simplicial, or equivalently when the quotientope~$\Quotientope$ is a simple polytope.
Important examples include the sylvester congruence~\cite{Tonks, HivertNovelliThibon-algebraBinarySearchTrees}, the recoil congruence, the Cambrian and permutree congruences~\cite{Reading-CambrianLattices, PilaudPons-permutrees}, and the sashes congruences~\cite{Law,LaniniNovelli}.
These congruences naturally appear in algebraic contexts, in particular in the construction of certain combinatorial Hopf algebras~\cite{LodayRonco, HivertNovelliThibon-algebraBinarySearchTrees, ChatelPilaud, PilaudPons-permutrees}, and in quiver representation theory~\cite{DemonetIyamaReadingReitenThomas}.

The simple congruences of the weak order are precisely those whose minimal contracted arcs do not cross the horizontal axis.
The forward direction of this statement is easy to prove.
The backward direction was originally proved by H.~Hoang and T.~Mütze in~\cite[Sect.~4.4]{HoangMutze} with a purely combinatorial (but slightly involved) method.
In their paper on the lattice theory of torsion classes, L.~Demonet, O.~Iyama, N.~Reading, I.~Reiten, and H.~Thomas provided an alternative proof~\cite[Sect.~6.3]{DemonetIyamaReadingReitenThomas} in terms of quiver representation theory.

Our first contribution is an alternative elementary combinatorial proof of this characterization based on the description of the Hasse diagrams of the posets of a simple congruence~$\equiv$.
Namely, we show that they are \defn{separating trees}, that is, directed trees on~$[n]$ where each node has at most two parents (\ie outgoing neighbors) and at most two children (\ie incoming neighbors), and where any node with two parents (resp.~children) separates its ancestor (resp.~descendant) subtrees, \ie the node is larger than all nodes of the left ancestor (resp.~descendant) subtree and smaller than all nodes of the right ancestor (resp.~descendant) subtree.
Our proof is based on the insertion map from permutations to posets of~$\equiv$ already described in~\cite{Pilaud-arcDiagramAlgebra}.

In fact, we actually completely characterize which separating trees appear for a given simple congruence~$\equiv$.
Namely, given a separating tree~$T$, each edge of~$T$ defines a mandatory arc while each pair of parents or children of a node of~$T$ defines a forbidden arc.
We prove that the separating tree~$T$ appears for the congruence~$\equiv$ if and only if~$\equiv$ contracts the forbidden arcs of~$T$ but not the mandatory arcs of~$T$.
In particular, the congruences in which a given separating tree appears form an interval in the lattice of simple congruences.

We then consider Schröder separating trees, that is, directed trees on a partition of~$[n]$ such that any two ancestor (resp.~descendant) subtrees of a node~$J$ are separated by at least one element of~$J$. 
They can also be obtained by edge contractions in separating trees, so that the Hasse diagrams of the preposets of a simple congruence all are Schröder separating trees.
Again, we characterize which Schröder separating trees appear for a given simple congruence~$\equiv$ in terms of mandatory and forbidden arcs contracted by~$\equiv$.
Let us underline that this provides the first combinatorial model for all faces of the quotientope~$\Quotientope$ when~$\equiv$ is a simple congruence.
Note that for an arbitrary congruence~$\equiv$, there are combinatorial models for the vertices of the quotientope~$\Quotientope$ by~\cite{Reading-arcDiagrams}, for the facets of~$\Quotientope$ by~\cite{AlbertinPilaudRitter}, and for the intervals in the quotient~$\f{S}_n/{\equiv}$ by~\cite{AlbertinPilaud}, but there is no combinatorial model for all faces of the quotientope~$\Quotientope$.

Finally, we discuss some algebraic aspects of separating trees.
In \cref{sec:algebraicStructure}, we define a natural algebra structure on decorated separating trees similar to that of~\cite{LodayRonco, ChatelPilaud, PilaudPons-permutrees}, but which unfortunately fails to define a Hopf algebra structure on decorated separating trees.
In \cref{sec:representationTheory}, we rephrase our result in terms of algebraic quotients of the lattice of torsion classes of the preprojective algebra of type~$A_n$, already largely explored in~\cite{DemonetIyamaReadingReitenThomas}.


\section{Lattice congruences of the weak order}
\label{sec:latticeCongruences}

We first recall the combinatorial and geometric toolbox to deal with lattice congruences of the weak order on~$\f{S}_n$.
The presentation and pictures are borrowed from~\cite{PilaudSantos-quotientopes, PadrolPilaudRitter}.
Throughout the paper, we let~$[n] \eqdef \{1, \dots, n\}$, $[i,j] \eqdef \{i, \dots, j\}$ and~${{]i,j[} \eqdef \{i+1, \dots, j-1\}}$ for~$i < j$.


\subsection{Weak order, braid fan, permutahedron}
\label{subsec:weakOrder}

The \defn{weak order} is the lattice on permutations of~$[n]$ defined by ${\sigma \le \tau \iff \inv(\sigma) \subseteq \inv(\tau)}$ where~$\inv(\sigma) \eqdef \set{(\sigma_p, \sigma_q)}{1 \le p < q \le n \text{ and } \sigma_p > \sigma_q}$ is the \defn{inversion set} of~$\sigma$.
The cover relations correspond to exchanges of two entries in adjacent positions in a permutation: a permutation~$\sigma$ covers (resp.~is covered by) the permutations obtained from~$\sigma$ by reversing a \defn{descent}~$\sigma_p > \sigma_{p+1}$ (resp.~an \defn{ascent}~$\sigma_p < \sigma_{p+1}$) of~$\sigma$.
See \cref{fig:weakOrder4}\,(left).

The \defn{braid arrangement} is the set~$\HA_n$ of hyperplanes~$\set{\b{x} \in \R^n}{\b{x}_i = \b{x}_j}$ for ${1 \le i < j \le n}$.
They divide~$\R^n$ into chambers, which are the maximal cones of a complete simplicial fan~$\braidFan$, called \defn{braid fan}.
This fan has a cone~$\Cone(\mu) \eqdef \set{\b{x} \in \R^n}{x_i \le x_j \text{ for all } i \preccurlyeq_\mu j}$ for each ordered set partition~$\mu$ of~$[n]$.
Here~$\preccurlyeq_\mu$ denotes the \defn{preposet} (\ie reflexive and transitive binary relation) on~$[n]$ defined by~$i \preccurlyeq_\mu j$ if the part of~$\mu$ containing~$i$ is before or equal to the part of~$\mu$ containing~$j$.
In particular, $\braidFan$ has a maximal cone for each permutation of~$[n]$.
See \cref{fig:weakOrder4}\,(middle).

The \defn{permutahedron} is the polytope~$\Perm$ defined as the convex hull of the points~$\sum_{i \in [n]} i \, \b{e}_{\sigma_i}$ for all permutations~$\sigma \in \f{S}_n$.
See \cref{fig:weakOrder4}\,(right).
The normal fan of the permutahedron~$\Perm$ is the braid fan~$\braidFan$.
The Hasse diagram of the weak order on~$\f{S}_n$ can be seen geometrically as the dual graph of the braid fan~$\braidFan$, or the graph of the permutahedron~$\Perm$, oriented in the linear direction~$\b{\alpha} \eqdef \sum_{i \in [n]} (2i-n-1) \, \b{e}_i = (-n+1, -n+3, \dots, n-3, n-1)$.
See \cref{fig:weakOrder4}.

\begin{figure}
	\capstart
	\centerline{\includegraphics[scale=.6]{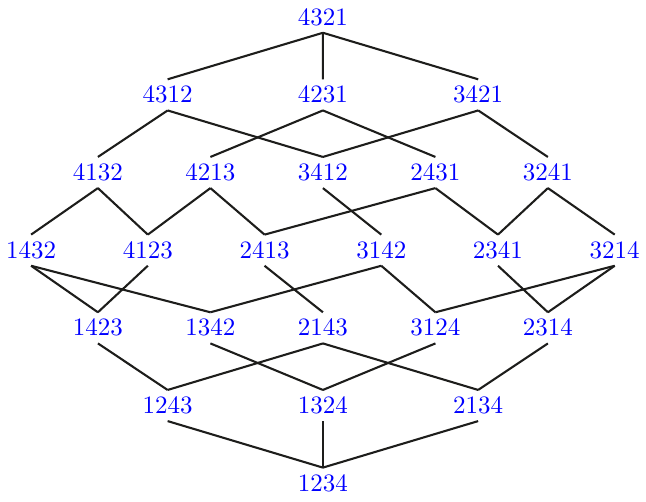} \; \includegraphics[scale=.6]{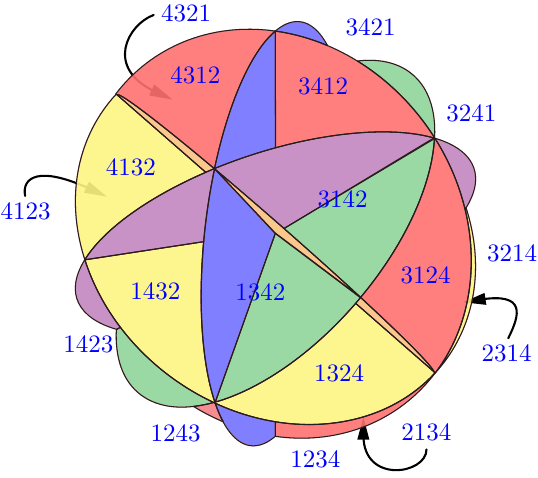} \; \includegraphics[scale=.6]{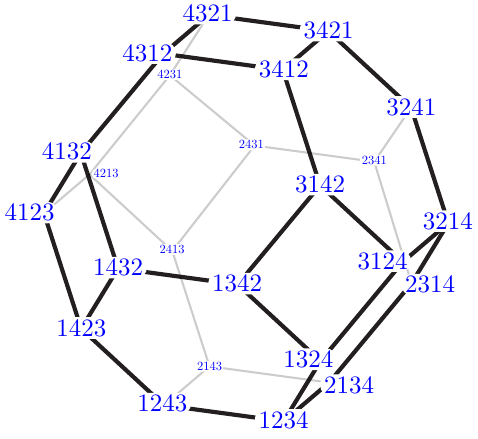}}
	\caption{The Hasse diagram of the weak order on~$\f{S}_4$ (left) can be seen as the dual graph of the braid fan~$\braidFan[4]$ (middle) or as the graph of the permutahedron~$\Perm[4]$ (right). \cite[Fig.~1]{PilaudSantos-quotientopes}}
	\label{fig:weakOrder4}
\end{figure}


\subsection{Noncrossing arc diagrams}
\label{subsec:noncrossingArcDiagrams}

We recall from~\cite{Reading-arcDiagrams} the bijection between permutations and noncrossing arc diagrams.
An \defn{arc} is a quadruple~$(i, j, A, B)$ formed by two integers $1 \le i < j \le n$ and a partition~${A \sqcup B = {]i,j[}}$.
We represent it by a curve joining~$i$ to~$j$, wiggling around the horizontal axis, passing above the points of~$A$ and below the points of~$B$.
For instance, \smash{\raisebox{-.13cm}{\includegraphics[scale=.8]{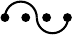}}} represents the arc~$(1,4,\{2\},\{3\})$.
We let~$\arcs_n \eqdef \set{(i, j, A, B)}{1 \le i < j \le n \text{ and } A \sqcup B = {]i,j[}}$ denote the set of all arcs.

Two arcs~$(i, j, A, B)$ and~$(i', j', A', B')$ \defn{cross} if the two corresponding curves intersect in their interior, that is, if both~$(A \cap B') \cup (\{i,j\} \cap B') \cup (A \cap \{i',j'\})$ and~$(B \cap A') \cup (\{i,j\} \cap A') \cup (B \cap \{i',j'\})$ are non-empty.
A \defn{noncrossing arc diagram} is a collection of arcs of~$\arcs_n$ where any two arcs do not cross and have distinct left endpoints and distinct right endpoints (but the right endpoint of an arc can be the left endpoint of another arc).
See \cref{fig:noncrossingArcDiagrams} for illustrations.

Consider now a permutation~$\sigma$ of~$[n]$, and its \defn{permutation table} containing the points~$(\sigma_p, p)$ for~$p \in [n]$.
(This unusual choice of orientation is necessary to fit later with the existing constructions ~\cite{LodayRonco, HivertNovelliThibon-algebraBinarySearchTrees, ChatelPilaud, PilaudPons-permutrees}.)
We draw segments between any two consecutive dots~$(\sigma_p, p)$ and~$(\sigma_{p+1}, p+1)$, colored blue if~$\sigma_p < \sigma_{p+1}$ is an \defn{ascent} and red if~$\sigma_p > \sigma_{p+1}$ is a \defn{descent}.
We then let all dots fall down to the horizontal axis, allowing the segments to curve, but not to cross each other nor to pass through any dot, as illustrated in \cref{fig:noncrossingArcDiagrams}.
The resulting set of blue (resp.~red) arcs is a noncrossing arc diagram~$\arcDiagramUp[](\sigma)$ (resp.~$\arcDiagramDown[](\sigma)$).

\begin{theorem}[{\cite[Thm.~3.1]{Reading-arcDiagrams}}]
The maps~$\arcDiagramUp[](\sigma)$ and~$\arcDiagramDown[](\sigma)$ are bijections from the permutations of~$\f{S}_n$ to the noncrossing arc diagrams on~$\arcs_n$.
\end{theorem}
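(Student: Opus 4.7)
The plan is to prove the bijection for $\arcDiagramUp[]$; the case of $\arcDiagramDown[]$ is symmetric (via the value-reversal $\sigma \mapsto w_0 \sigma$, which interchanges ascents with descents and blue with red arcs). I would proceed in three stages: well-definedness, injectivity, and surjectivity.

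\textbf{Well-definedness.} Two distinct ascents of $\sigma$ at positions $p \ne p'$ yield blue arcs with distinct left endpoints $\sigma_p \ne \sigma_{p'}$ and distinct right endpoints $\sigma_{p+1} \ne \sigma_{p'+1}$, since $\sigma$ is a bijection on $[n]$. In the permutation table, two consecutive-pair segments share at most one dot, which occurs precisely when $|p - p'| = 1$; after the falling-down process this becomes the shared right-left endpoint of two successive arcs, as allowed. The simultaneous falling of the dots is a continuous deformation of the plane relative to the (disjoint) dot trajectories, so no new crossings can appear.

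\textbf{Injectivity.} The diagram $\arcDiagramUp[](\sigma)$ records, for each ascent of $\sigma$ at position $p$, an arc $(i, j, A, B)$ with $i = \sigma_p$, $j = \sigma_{p+1}$, and $A \sqcup B$ the partition of $]i, j[$ according to whether each intermediate value sits at position $< p$ (in $A$) or $> p+1$ (in $B$). Chains of arcs sharing endpoints reconstruct the maximal ascending runs of $\sigma$, and the partitions $A, B$ at the chain boundaries determine the relative order of the remaining values in the descending gaps. A case analysis then verifies that $\arcDiagramUp[](\sigma) = \arcDiagramUp[](\tau)$ implies $\sigma = \tau$.

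\textbf{Surjectivity via counting.} I would show that the number of noncrossing arc diagrams on $\arcs_n$ equals $n!$, which together with injectivity yields the bijection. The count is obtained inductively: given a noncrossing diagram on $\arcs_{n-1}$, there are exactly $n$ noncrossing extensions to a diagram on $\arcs_n$, matching the $n$ positions available for the new value $n$ in the enlarged permutation. Each extension either adjoins a single new arc ending at $n$ with a suitable $(A, B)$ partition of $]i, n[$, or adjoins no arc involving $n$ (placing $n$ at the final position).

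The main obstacle is the inductive step for surjectivity: establishing that valid attachments of $n$ to a noncrossing diagram on $\arcs_{n-1}$ are in bijection with the insertion slots $\{1, \ldots, n\}$. This requires showing that the noncrossing constraint with existing arcs leaves exactly one valid partition $(A, B)$ per slot, which ultimately traces back to the ``separating'' behaviour of noncrossing curves, a theme picked up in the paper's later sections on separating trees.
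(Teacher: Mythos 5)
The paper does not prove this statement: it is quoted verbatim from \cite{Reading-arcDiagrams} and used as a black box, so there is no in-paper proof to compare against. Judged on its own terms, your outline has a genuine gap in the surjectivity step.

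The claim that every noncrossing arc diagram on $\arcs_{n-1}$ admits exactly $n$ noncrossing extensions to $\arcs_n$ is false, so the induction does not close. Already for $n=3$: the empty diagram on $\arcs_2$ has $4$ extensions (adjoin nothing, $(2,3,\varnothing,\varnothing)$, $(1,3,\{2\},\varnothing)$, or $(1,3,\varnothing,\{2\})$), while the diagram $\{(1,2,\varnothing,\varnothing)\}$ has only $2$ (adjoin nothing or $(2,3,\varnothing,\varnothing)$, since a new arc $(1,3,\cdot,\cdot)$ would repeat the left endpoint $1$). The total is $4+2=6=3!$, but the count per diagram is not uniform. The structural reason your bijection with insertion slots breaks is that inserting the value $n$ at position $p$ into $\sigma\in\f{S}_{n-1}$ does not merely adjoin an arc ending at $n$: if $\sigma_{p-1}<\sigma_p$ was an ascent, that arc is \emph{destroyed} and replaced by the arc from $\sigma_{p-1}$ to $n$. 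Concretely, $132$ is obtained by inserting $3$ into $12$, yet $\arcDiagramUp[](132)=\{(1,3,\varnothing,\{2\})\}$ restricts to the empty diagram on $\arcs_2$, not to $\arcDiagramUp[](12)=\{(1,2,\varnothing,\varnothing)\}$. So ``restrict the diagram to $\arcs_{n-1}$'' and ``delete the value $n$ from the permutation'' are incompatible recursions, and the inductive step as you set it up cannot be repaired by a local argument. The injectivity step is also only a sketch (``a case analysis then verifies''), but the reconstruction idea there — read off maximal ascending runs from chains of arcs and interleave them using the $A/B$ data — is sound and is essentially how Reading builds the explicit inverse map; carrying that reconstruction out for an \emph{arbitrary} noncrossing diagram would give you injectivity and surjectivity simultaneously, and is the cleaner route to take.
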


The map~$\arcDiagramUp[]$ (resp.~$\arcDiagramDown[]$) is particularly adapted to understand canonical meet (resp.~join) representations in the weak order.
Namely, each single arc corresponds to a meet (resp.~join) irreducible permutation, \ie with a single ascent (resp.~descent), and the arc diagram~$\arcDiagramUp[](\sigma)$ (resp.~$\arcDiagramDown[](\sigma)$) of a permutation corresponds to the canonical meet (resp.~join) representation of~$\sigma$.
See \cite{Reading-arcDiagrams} for more details.
While the description of lattice quotients of the weak order below is based on this fact, we do not strictly need the precise details in this paper.

\begin{figure}
	\capstart
	\centerline{\includegraphics[scale=.85]{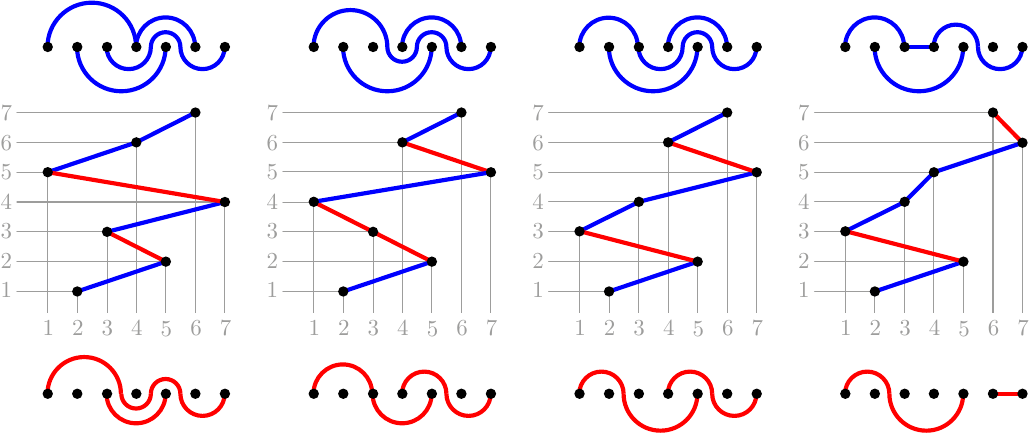}}
	\caption{The noncrossing arc diagrams~$\arcDiagramDown[](\sigma)$ (bottom) and~$\arcDiagramUp[](\sigma)$ (top) for the permutations~$\sigma = 2537146$, $2531746$, $2513746$, and $2513476$. \cite[Fig.~2]{Pilaud-arcDiagramAlgebra}}
	\label{fig:noncrossingArcDiagrams}
\end{figure}


\subsection{Lattice congruences and arc ideals}
\label{subsec:latticeCongruencesArcIdeals}

A \defn{lattice congruence} of the weak order is an equivalence relation~$\equiv$ on~$\f{S}_n$ that respects the meet and join operations, \ie such that $x \equiv x'$ and~$y \equiv y'$ implies $x \meet y \, \equiv \, x' \meet y'$ and~$x \join y \, \equiv \, x' \join y'$.
The \defn{lattice quotient}~$\f{S}_n/{\equiv}$ is the lattice on the congruence classes of~$\equiv$ where~$X \le Y$ if and only if there exist~$x \in X$ and~$y \in Y$ such that~$x \le y$, and~$X \meet Y$ (resp.~$X \join Y$) is the congruence class of~$x \meet y$ (resp.~$x \join y$) for any~$x \in X$~and~$y \in Y$.

The compatibility with the meet and join operations in fact forces all congruence classes of~$\equiv$ to be intervals (indeed, $x \equiv y$ implies that~$x \meet y \equiv x \meet x = x = x \join x \equiv x \join y$ so that congruence classes are stable by meet and join).
For a permutation~$\sigma$, we denote by~$\projDown(\sigma)$ and~$\projUp(\sigma)$ the top and bottom permutations of the $\equiv$-congruence class of~$\sigma$, and we define the noncrossing arc diagrams~$\arcDiagramDown(\sigma) \eqdef \arcDiagramDown[](\projDown(\sigma))$ and~$\arcDiagramUp(\sigma) \eqdef \arcDiagramUp[](\projUp(\sigma))$.
See \cref{fig:arcDiagramsQuotients}.
Note that we will describe later in \cref{subsec:insertionMap} the noncrossing arc diagrams ~$\arcDiagramDown(\sigma)$ and~$\arcDiagramUp(\sigma)$ directly from the permutation~$\sigma$, without passing through the minimal and maximal permutations~$\projDown(\sigma)$ and~$\projUp(\sigma)$ of the congruence class of~$\sigma$.
The interval~$[\projDown(\sigma), \projUp(\sigma)]$ is the set of all linear extensions of a poset~$\preccurlyeq^\equiv_\sigma$ on~$[n]$ whose cover relations are~$j \preccurlyeq^\equiv_\sigma i$ for each descent~$ji$ of~$\projDown(\sigma)$ and~$i \preccurlyeq^\equiv_\sigma j$ for each ascent~$ij$ of~$\projUp(\sigma)$.
Hence, the Hasse diagram of the poset~$\preccurlyeq^\equiv_\sigma$ is obtained by superimposing the arcs of~$\arcDiagramDown(\sigma)$ oriented from right to left with the arcs of~$\arcDiagramUp(\sigma)$ oriented from left to right.

We now denote by~$\arcs_\equiv$ the set of arcs corresponding via~$\arcDiagramDown[]$ to permutations with a single descent and minimal in their class, or equivalently, via~$\arcDiagramUp[]$ to permutations with a single ascent and maximal in their class.
This set~$\arcs_\equiv$ enables us to completely describe the lattice quotient~$\f{S}_n / {\equiv}$.

\pagebreak
\begin{theorem}[{\cite[Thm.~4.1]{Reading-arcDiagrams}}]
The quotient~$\f{S}_n/{\equiv}$ is isomorphic (as a poset) to the subposet of the weak order induced by the permutations which are minimal (resp.~maximal) in their class, which are precisely the permutations~$\sigma$ for which all arcs in the noncrossing arc diagram~$\arcDiagramDown[](\sigma)$ (resp.~$\arcDiagramUp[](\sigma)$) belong~to~$\arcs_\equiv$.
\end{theorem}

This proposition actually translates the fact that canonical representations behave properly under lattice quotients.
Namely, the noncrossing arc diagrams of~$\arcs_\equiv$ precisely encode the canonical join (resp.~meet) representations in the quotient~$\f{S}_n/{\equiv}$.
See~\cite{Reading-arcDiagrams} for more details.

Understanding the lattice congruences of the weak order thus boils down to understand the sets of arcs~$\arcs_\equiv$.
An arc~$\arc \eqdef (i, j, A, B) \in \arcs_n$ is a \defn{subarc} of an arc~$\arc' \eqdef (i', j', A', B') \in \arcs_n$ if~$i' \le i < j \le j'$ and~${A \subseteq A'}$ and~${B \subseteq B'}$.
Visually, $\arc$ is a subarc of~$\arc'$ if the endpoints of~$\arc$ are located in between those of~$\arc'$ and~$\arc$ agrees with~$\arc'$ in between its endpoints.
The subarc poset on~$\arcs_4$ is illustrated in \cref{fig:subarcOrder}.
Note that contrarily to~\cite{Reading-arcDiagrams, PilaudSantos-quotientopes, PadrolPilaudRitter}, we do not reverse the subarc poset to avoid confusion when speaking about subarc minimal arcs (hence, our subarc minimal arcs have minimal length).
An \defn{arc ideal} is a lower ideal of the subarc poset (meaning that if~$\alpha$ is a subarc of~$\beta$, then~$\beta \in \c{I}$ implies~$\alpha \in \c{I}$).

\begin{figure}
	\capstart
	\centerline{\includegraphics[scale=.7,valign=c]{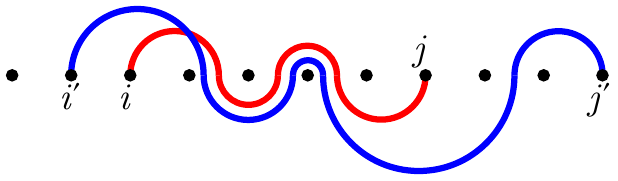} \hspace{1cm} \includegraphics[scale=.6,valign=c]{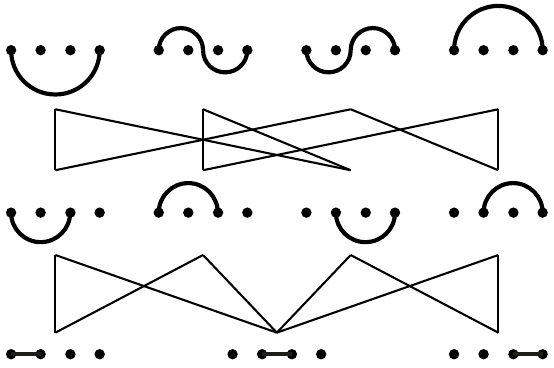}}
	\caption{The subarc relation among arcs (left) and the subarc poset for~$n = 4$ (right). The red arc~$(i,j,A,B)$ is a subarc of the blue arc~$(i',j',A',B')$. \mbox{\cite[Fig.~5]{PilaudSantos-quotientopes}}}
	\label{fig:subarcOrder}
\end{figure}

\begin{theorem}[{\cite[Thm.~4.4 \& Coro.~4.5]{Reading-arcDiagrams}}]
The map~${\equiv} \mapsto \arcs_\equiv$ is a bijection between the lattice congruences of the weak order on~$\f{S}_n$ and the arc ideals of~$\arcs_n$.
\end{theorem}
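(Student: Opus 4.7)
The plan is to reduce the statement to the general fact that a congruence of a finite lattice is determined by which join-irreducibles it contracts, and then verify that, under the identification of single arcs with join-irreducibles, the resulting forcing order matches the subarc order.

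First, I would note that a permutation is join-irreducible in the weak order precisely when it has a single descent, and hence via $\arcDiagramDown[]$ the join-irreducibles of $\f{S}_n$ are in natural bijection with the single arcs in $\arcs_n$. Since the canonical join representation of a permutation $\sigma$ is read off from $\arcDiagramDown[](\sigma)$, a permutation $\sigma$ is the minimum of its $\equiv$-class if and only if every arc of $\arcDiagramDown[](\sigma)$ corresponds to a non-contracted join-irreducible. Thus $\arcs_\equiv$ is exactly the set of non-contracted join-irreducibles for $\equiv$, and the map ${\equiv} \mapsto \arcs_\equiv$ is injective by the general theory of congruences on finite lattices.

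Second, the classical theory (going back to Funayama--Nakayama, and refined by Day and Reading) says that the sets of contracted join-irreducibles arising from congruences are precisely the filters in the \emph{forcing} preorder, where $j$ forces $j'$ whenever every congruence contracting $j$ also contracts $j'$; equivalently, the admissible sets $\arcs_\equiv$ are the lower sets in the opposite order. The theorem thus reduces to showing that the forcing preorder on single-arc join-irreducibles coincides with the subarc order.

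For the direction \emph{subarc implies forcing}, I would decompose an inclusion of $\arc$ as a subarc of $\arc'$ into a sequence of elementary enlargements (shifting one endpoint by one step, or attaching one additional separated point above or below the axis), and for each elementary step exhibit a concrete forcing square: four permutations with covers $x \lessdot y$ and $x' \lessdot y'$ such that $x = x' \wedge y$ and $y' = x' \vee y$, so that contracting one cover of the square automatically forces contracting the other. For the direction \emph{forcing implies subarc}, I would construct, for each arc $\arc$, an explicit congruence $\equiv_\arc$ that contracts exactly $\arc$ and its superarcs; a natural candidate is the smallest congruence contracting $\arc$, whose contracted join-irreducibles can be computed by iterating the elementary forcing squares from the first direction.

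The main obstacle is the case-by-case combinatorial verification of these elementary forcing squares and the check that iterating them produces no unintended contractions; this bookkeeping is the core of Reading's original argument in~\cite{Reading-arcDiagrams}. Once it is in place, surjectivity of ${\equiv} \mapsto \arcs_\equiv$ follows by starting from any arc ideal $I$, intersecting the congruences $\equiv_\arc$ for $\arc \in \arcs_n \setminus I$, and verifying that the resulting congruence has $\arcs_\equiv = I$.
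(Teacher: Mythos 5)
This statement is quoted in the paper from \cite[Thm.~4.4 \& Coro.~4.5]{Reading-arcDiagrams} and is not proved there, so there is no internal proof to compare against; what you have written is an outline of Reading's original argument. Your architecture is the standard and correct one: join-irreducibles of the weak order are the permutations with a single descent, hence correspond to single arcs; a finite-lattice congruence is determined by its set of contracted join-irreducibles (the join-irreducible congruences of $\mathrm{Con}(\f{S}_n)$ are the $\mathrm{con}(j_*,j)$, so injectivity of ${\equiv}\mapsto\arcs_\equiv$ is indeed general nonsense); and the whole content of the theorem is that the forcing preorder on these join-irreducibles is exactly the subarc order. But that last point is precisely what you defer: both directions (``subarc implies forcing'' via elementary forcing squares, and ``forcing implies subarc'' via computing the smallest congruence contracting a given arc) are described but not executed, and you say yourself that this ``bookkeeping is the core of Reading's original argument.'' As a proof, then, the proposal reduces the theorem to its only nontrivial part and stops there.

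One concrete error in the final step: to realize a given arc ideal $I$ you propose \emph{intersecting} the congruences $\equiv_\arc$ for $\arc\in\arcs_n\setminus I$. The meet of congruences contracts only the join-irreducibles contracted by \emph{all} of them, i.e.\ the common superarcs of all $\arc\notin I$, which is far too small. You need the \emph{join} (transitive closure of the union) of the $\equiv_\arc$, which contracts the union of the upper sets generated by the $\arc\notin I$; this equals $\arcs_n\setminus I$ exactly because $I$ is a lower set. (Equivalently, one can take the meet of the \emph{largest} congruences not contracting $\arc$, over $\arc\in I$ — but that is a different family from the one you defined.) With that fixed and the forcing-square verification actually carried out, the outline would become Reading's proof.
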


We denote by~$\equiv_\arcs$ the lattice congruence of the weak order on~$\f{S}_n$ corresponding to an arc ideal~$\arcs \subseteq \arcs_n$.
We make no distinction between arc ideals and lattice congruences.

Finally, note that the set of all congruences of the weak order is itself a lattice, whose meet is the intersection of congruences and join is the transitive closure of union of congruences.
This lattice is distributive, and we have~$\arcs_{{\equiv} \join {\equiv'}} = \arcs_\equiv \cup \arcs_{\equiv'}$ and~$\arcs_{{\equiv} \meet {\equiv'}} = \arcs_\equiv \cap \arcs_{\equiv'}$.

Throughout the paper, we focus on \defn{essential} lattice congruences~$\equiv$, whose arc ideal~$\arcs_\equiv$ contains all basic arcs~$(i, i+1, \varnothing, \varnothing)$ for~$i \in [n-1]$.
The non-essential lattice congruences can be understood as Cartesian products of essential congruences of weak orders of smaller rank.

\begin{example}
\label{exm:sylvesterCongruence}
\begin{figure}
	\capstart
	\centerline{\includegraphics[scale=.6]{weakOrderLeft4} \; \includegraphics[scale=.6]{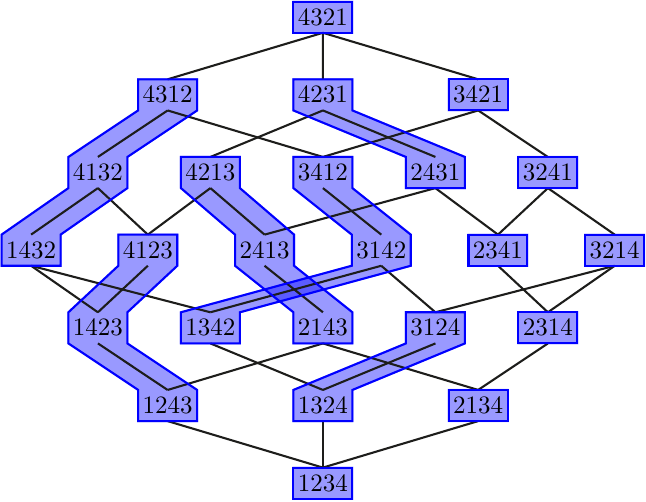} \; \includegraphics[scale=.48]{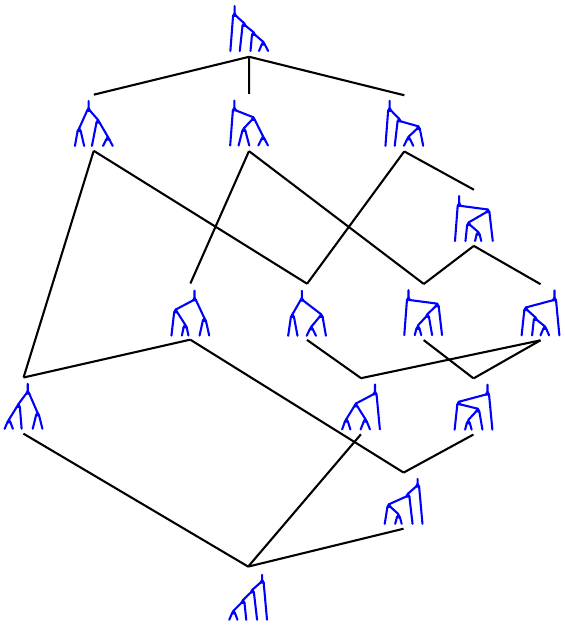}}
	\caption{The weak order on~$\f{S}_4$ (left), the sylvester congruence~$\equiv_\textrm{sylv}$~(middle), and the Tamari lattice (right). \cite[Fig.~1 \& 2]{PilaudSantos-quotientopes}}
	\label{fig:sylvesterCongruence}
\end{figure}
The most classical lattice congruence of the weak order on~$\f{S}_n$ is the \defn{sylvester congruence}~$\equiv_\textrm{sylv}$ \cite{LodayRonco, HivertNovelliThibon-algebraBinarySearchTrees}.
See \cref{fig:sylvesterCongruence}\,(middle).
Its congruence classes are the fibers of the binary search tree insertion algorithm, or equivalently the sets of linear extensions of binary trees, labeled in inorder (\ie a node is labeled after its left subtree and before its right subtree) and considered as posets oriented toward their roots.
It can also be seen as the transitive closure of the rewriting rule~$U i k V j W \equiv_\textrm{sylv} U k i V j W$ where~$i < j < k$ are letters and~$U,V,W$ are words on~$[n]$.
In other words, the permutations minimal in their sylvester congruence class are those avoiding the pattern~$312$.
The quotient of the weak order by the sylvester congruence is (isomorphic to) the classical \defn{Tamari lattice}~\cite{Tamari}, whose elements are the binary trees on~$n$ nodes and whose cover relations are right rotations in binary trees.
See \cref{fig:sylvesterCongruence}\,(right).
Its arc ideal is given by the set~$\arcs_\textrm{sylv} \eqdef \set{(i, j, {]i,j[}, \varnothing)}{1 \le i < j \le n}$ of up arcs, and the corresponding noncrossing arc diagrams are known as \defn{noncrossing partitions} of~$[n]$.
\end{example}


\pagebreak
\subsection{Poset insertion map}
\label{subsec:insertionMap}

\begin{figure}
	\capstart
	\centerline{\includegraphics[scale=.85]{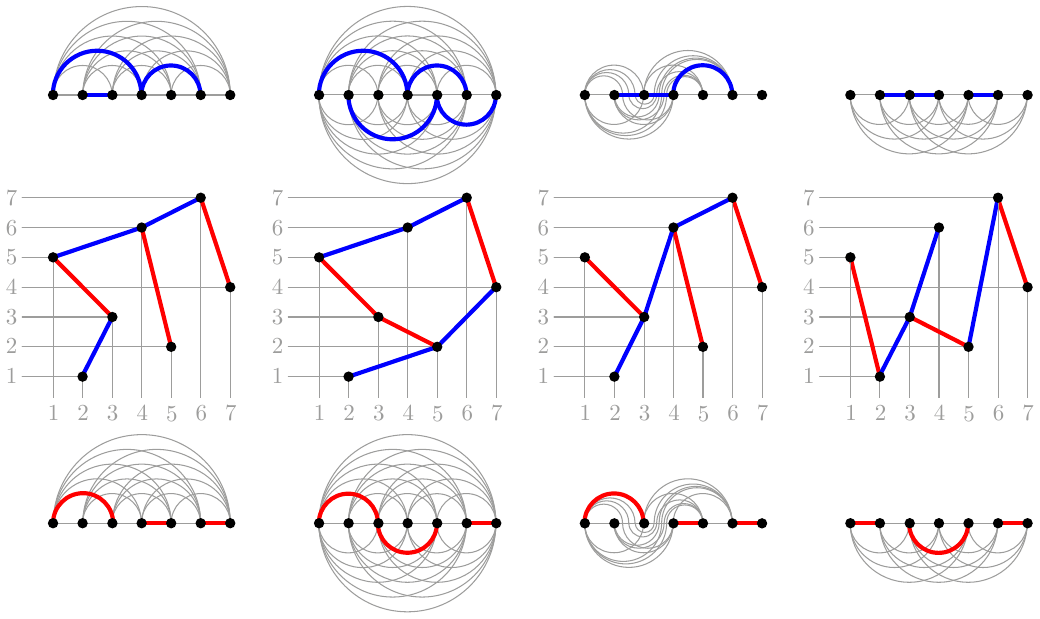}}
	\caption{The poset~$\preccurlyeq_\sigma^\equiv$ (middle row) and the noncrossing arc diagrams~$\arcDiagramUp(\sigma)$ (top row) and~$\arcDiagramDown(\sigma)$ (bottom row) corresponding to the $\equiv_\arcs$-congruence class of the permutation~$\sigma = 2537146$ for different arc ideals~$\arcs$ (represented in grey on the top and bottom rows).}
	\label{fig:arcDiagramsQuotients}
\end{figure}

For a fixed lattice congruence~$\equiv$ of the weak order, we describe in this section the lattice morphism from the weak order to its quotient~$\f{S}/{\equiv}$, sending a permutation~$\sigma$ of~$\f{S}_n$ to its $\equiv$-poset~$\preccurlyeq_\sigma^\equiv$.
We still represent a permutation~$\sigma$ by its table~$\set{(\sigma_p,p)}{p \in [n]} = \set{(i, \sigma^{-1}_i)}{i \in [n]}$.
We consider the set~$\boxslash(\sigma)$ of pairs of values~$(i,j)$ such that the rectangle with bottom left corner~$(i, \sigma^{-1}_i)$ and top right corner~$(j, \sigma^{-1}_j)$ contains no other point~$(k, \sigma^{-1}_k)$ of the permutation table of~$\sigma$, that is,
\[
\boxslash(\sigma) \eqdef \set{(i, j)}{1 \le i < j \le n, \, \sigma^{-1}_i < \sigma^{-1}_j \text{ and } \sigma^{-1}({]i,j[}) \cap {]\sigma^{-1}_i, \sigma^{-1}_j[} = \varnothing}.
\]
We order~$\boxslash(\sigma)$ by the slope of the diagonal joining~$(i, \sigma^{-1}_i)$ to~$(j, \sigma^{-1}_j)$, that is, $(i, j) \trianglelefteq (k,\ell)$ if~$i \le k < \ell \le j$ and~$\sigma^{-1}_k \le \sigma^{-1}_i < \sigma^{-1}_j \le \sigma^{-1}_\ell$.
For~$(i, j) \in \boxslash(\sigma)$, we define an arc~$\alpha(i, j, \sigma)$ by
\[
\arcUp(i, j, \sigma) \eqdef \bigl( i, j, \set{k \in {]i,j[}}{\sigma^{-1}_k < \sigma^{-1}_i}, \set{k \in {]i,j[}}{\sigma^{-1}_k > \sigma^{-1}_j} \bigr).
\]
Finally, we define~$\boxslash_\equiv(\sigma)$ to be the subset of $\trianglelefteq$-minimal elements in~$\set{(i, j) \in \boxslash(\sigma)}{\arcUp(i, j, \sigma) \in \arcs_\equiv}$.
In other words, we want the flattest diagonals of~$\boxslash(\sigma)$ with~$\arcUp(i, j, \sigma) \in \arcs_\equiv$.

Symmetrically, let~$\boxbslash(\sigma) \eqdef \set{(i, j)}{1 \le i < j \le n, \, \sigma^{-1}_i > \sigma^{-1}_j \text{ and } \sigma^{-1}({]i,j[}) \cap {]\sigma^{-1}_j, \sigma^{-1}_i[} = \varnothing}$.
Order~$\boxbslash(\sigma)$ by~$(i,j) \trianglelefteq (k,\ell)$ if~$k \le i < j \le \ell$ and~$\sigma^{-1}_j \le \sigma^{-1}_\ell < \sigma^{-1}_k \le \sigma^{-1}_i$.
Associate~to each~${(i,j) \in \boxbslash(\sigma)}$ an arc~$\arcDown(i, j, \sigma) \eqdef \bigl( i, j, \set{k \in {]i,j[}}{\sigma^{-1}_k < \sigma^{-1}_j}, \set{k \in {]i,j[}}{\sigma^{-1}_k > \sigma^{-1}_i} \bigr)$.
Finally, let~$\boxbslash_\equiv(\sigma)$ be the $\trianglelefteq$-maximal elements in~$\set{(i, j) \in \boxbslash(\sigma)}{\arcDown(i, j, \sigma) \in \arcs_\equiv}$.
In other words, we want the flattest diagonals of~$\boxbslash(\sigma)$ with~$\arcDown(i, j, \sigma) \in \arcs_\equiv$.

The following statements are illustrated in \cref{fig:arcDiagramsQuotients}.

\begin{proposition}[{\cite[Sect.~2.4]{Pilaud-arcDiagramAlgebra}}]
\label{prop:insertionMap}
The noncrossing arc diagram~$\arcDiagramUp(\sigma)$ (resp.~$\arcDiagramDown(\sigma)$) contains exactly the arcs~$\arcUp(i,j,\sigma)$ for all~$(i,j) \in \boxslash_\equiv(\sigma)$ (resp.~the arcs~$\arcDown(i,j,\sigma)$ for all~$(i,j) \in \boxbslash_\equiv(\sigma)$).
\end{proposition}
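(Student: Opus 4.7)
The strategy is to prove the claim for $\arcDiagramUp(\sigma)$; the claim for $\arcDiagramDown(\sigma)$ then follows by the symmetric argument, reversing the roles of ascents and descents and of $\boxslash$ and $\boxbslash$. The plan proceeds in two steps: a base case for the trivial congruence, followed by an inductive reduction to it.

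For the trivial congruence $\arcs_\equiv = \arcs_n$, we have $\projUp(\sigma) = \sigma$, and it suffices to verify that the $\prec$-maximal elements of $\boxslash(\sigma)$ are precisely the adjacent ascents $(i, i+1)$ with $\sigma_i < \sigma_{i+1}$, and that the associated arcs $\arcUp(i, i+1, \sigma)$ coincide with those produced by the fall-down construction of \cref{subsec:noncrossingArcDiagrams}. Each adjacent ascent is automatically $\prec$-maximal because its position interval is minimal. Conversely, any $(i,j) \in \boxslash(\sigma)$ with $j - i \ge 2$ has $\sigma_{i+1} \notin {]\sigma_i, \sigma_j[}$ by the $\boxslash$ condition, and a short case analysis on whether $\sigma_{i+1} < \sigma_i$ or $\sigma_{i+1} > \sigma_j$ exhibits a nearby pair in $\boxslash(\sigma)$ that strictly $\prec$-dominates $(i,j)$. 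The explicit formula for $\arcUp(i, i+1, \sigma)$ then matches the arc obtained from the ascent $\sigma_i\sigma_{i+1}$ under the fall-down procedure.

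For a general congruence, we proceed by induction on the weak-order distance $|\inv(\projUp(\sigma))| - |\inv(\sigma)|$. If $\sigma = \projUp(\sigma)$, every adjacent ascent arc of $\sigma$ belongs to $\arcs_\equiv$ by the characterization of maximal-in-class elements recalled in \cref{subsec:latticeCongruencesArcIdeals}, and the base case applies. Otherwise, there is an ascent $\sigma_i\sigma_{i+1}$ such that swapping positions $i$ and $i+1$ yields a permutation $\sigma'$ still in the $\equiv$-class of $\sigma$, which by the same characterization forces $\arcUp(i, i+1, \sigma) \notin \arcs_\equiv$. By induction the claim holds for $\sigma'$, and it remains to prove the invariance
\[
\set{\arcUp(i,j,\sigma)}{(i,j) \in \boxslash_\equiv(\sigma)} = \set{\arcUp(i,j,\sigma')}{(i,j) \in \boxslash_\equiv(\sigma')}.
\]

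The main obstacle is establishing this invariance under a single up-swap. The pair $(i, i+1)$ itself leaves $\boxslash$ after the swap, but was excluded from $\boxslash_\equiv(\sigma)$ by hypothesis, so it contributes no arc. The delicate part is tracking pairs of the form $(k,i)$, $(k,i+1)$, $(i,\ell)$, $(i+1,\ell)$, and $(k,\ell)$ with $k < i$ and $\ell > i+1$, which may transfer between $\boxslash$ and $\boxbslash$ across the swap and whose $\prec$-maximality status may shift. The crucial input is that $\arcs_\equiv$ is an arc ideal: any arc subarc-below the forbidden arc $\arcUp(i, i+1, \sigma)$ is also absent from $\arcs_\equiv$, which rules out the would-be new contributors to $\boxslash_\equiv(\sigma')$ and ensures that the family of arcs attached to $\prec$-maximal pairs is stable across the swap. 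Once this local invariance is verified, the induction closes.
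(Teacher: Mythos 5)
First, a caveat on the comparison: the paper does not prove \cref{prop:insertionMap} at all --- it is imported verbatim from the cited reference --- so there is no in-paper argument to measure yours against. Your overall strategy (evaluate the right-hand side at the class-maximum, where it reduces to reading off the adjacent ascents, then propagate through the class by showing that $\set{\arcUp(p,q,\sigma)}{(p,q) \in \boxslash_\equiv(\sigma)}$ is invariant under a contracted up-swap) is the natural one, and the base case is essentially fine once made precise: every $(p,q) \in \boxslash(\sigma)$ is $\prec$-dominated by an adjacent ascent $(k,k+1)$ with $p \le k < k+1 \le q$ and $\sigma_k \le \sigma_p < \sigma_q \le \sigma_{k+1}$ (take $k$ maximal in $[p,q-1]$ with $\sigma_k \le \sigma_p$), and when $\sigma = \projUp(\sigma)$ all adjacent ascent arcs lie in $\arcs_\equiv$.

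The gap is in the inductive step, which is where all the content lives. You correctly identify the swap invariance as the crux, but you do not prove it, and the one mechanism you offer for it is stated backwards: $\arcs_\equiv$ is a \emph{lower} ideal of the subarc poset, so arcs ``subarc-below'' the forbidden arc $\arcUp(i,i+1,\sigma)$ may perfectly well belong to $\arcs_\equiv$; what is true, and what is actually needed, is that every arc \emph{containing} $\arcUp(i,i+1,\sigma)$ as a subarc is absent. Concretely, the genuinely new elements of $\boxslash(\sigma')$ are the pairs $(k,i)$ with $\sigma_k < \sigma_i$ and $(i+1,\ell)$ with $\sigma_\ell > \sigma_{i+1}$, and one must check that their arcs admit $\arcUp(i,i+1,\sigma)$ as a subarc, hence lie in $\arcs_n \ssm \arcs_\equiv$ by the upper-ideal property of that complement --- not by the property you invoke. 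Beyond that, two further pieces are missing: (a) the fact that the cover from $\sigma$ to $\sigma'$ is contracted by $\equiv$ if and only if $\arcUp(i,i+1,\sigma) \notin \arcs_\equiv$, which you use in both directions but which does not follow formally from the characterization of class-maximal permutations recalled in \cref{subsec:latticeCongruencesArcIdeals}; and (b) the verification that the endpoint-shifting correspondence (e.g.\ $(k,i) \leftrightarrow (k,i+1)$, $(i,\ell) \leftrightarrow (i+1,\ell)$) preserves both the associated arcs and the relation $\prec$, so that $\prec$-maximality transfers. Each of these is a finite but genuinely nontrivial case analysis; as written, the proposal names the obstacle rather than clearing it.
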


\begin{corollary}
\label{coro:insertionMap}
The $\equiv$-poset~$\preccurlyeq_\sigma^\equiv$ corresponding to the $\equiv$-congruence class of a permutation~$\sigma$ has cover relations~$i \preccurlyeq_\sigma^\equiv j$ for~$(i,j) \in \boxslash_\equiv(\sigma)$ and~$j \preccurlyeq_\sigma^\equiv i$ for~$(i,j) \in \cup \boxbslash_\equiv(\sigma)$.
\end{corollary}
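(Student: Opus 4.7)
The plan is to derive \cref{coro:insertionMap} as a direct translation of \cref{prop:insertionMap} via the correspondence between noncrossing arc diagrams and Hasse diagrams of $\equiv$-posets recalled in \cref{subsec:latticeCongruencesArcIdeals}. Recall from there that the Hasse diagram of $\preccurlyeq^\equiv_\sigma$ is the superposition of the arcs of $\arcDiagramUp(\sigma)$ (oriented left-to-right, one per ascent of $\projUp(\sigma)$) with the arcs of $\arcDiagramDown(\sigma)$ (oriented right-to-left, one per descent of $\projDown(\sigma)$). So my task reduces to reading off the endpoints of the arcs produced by \cref{prop:insertionMap} and checking that each indexed pair $(i,j)$ contributes the claimed cover $\sigma_i \preccurlyeq^\equiv_\sigma \sigma_j$.

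The first step handles up arcs. For $(i,j) \in \boxslash_\equiv(\sigma)$ we have $\sigma_i < \sigma_j$ by the very definition of $\boxslash(\sigma)$, and \cref{prop:insertionMap} places the arc $\arcUp(i,j,\sigma) = (\sigma_i,\sigma_j,\ldots)$ in $\arcDiagramUp(\sigma)$. Since such an arc, oriented from its left to its right endpoint, contributes precisely the cover $\sigma_i \preccurlyeq^\equiv_\sigma \sigma_j$ to the Hasse diagram of $\preccurlyeq^\equiv_\sigma$, this matches the claim.

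The second step is symmetric for down arcs. For $(i,j) \in \boxbslash_\equiv(\sigma)$ we have $\sigma_j < \sigma_i$, and \cref{prop:insertionMap} places the arc $\arcDown(i,j,\sigma) = (\sigma_j,\sigma_i,\ldots)$ in $\arcDiagramDown(\sigma)$. This arc, oriented right-to-left, corresponds to the descent $\sigma_i\sigma_j$ of $\projDown(\sigma)$; by the convention of \cref{subsec:latticeCongruencesArcIdeals} (a descent $ji$ produces the cover $j \preccurlyeq^\equiv_\sigma i$), its contribution to the Hasse diagram is exactly $\sigma_i \preccurlyeq^\equiv_\sigma \sigma_j$, the larger value being covered by the smaller. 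Since \cref{prop:insertionMap} gives a bijective description of the two arc diagrams, these two families exhaust the edges of the Hasse diagram and the corollary follows.

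There is no real obstacle here: the only point requiring attention is the bookkeeping around orientations, specifically the observation that down arcs encode covers in which the larger endpoint sits below the smaller one. Once this is reconciled with the fact that $\boxbslash(\sigma)$ is indexed by position-pairs $(i,j)$ with $\sigma_i > \sigma_j$, the uniform formula $\sigma_i \preccurlyeq^\equiv_\sigma \sigma_j$ in the statement emerges automatically from both cases.
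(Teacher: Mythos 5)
Your proposal is correct and matches the paper's (implicit) argument: the paper gives no separate proof, treating the corollary as an immediate translation of \cref{prop:insertionMap} through the superposition description of the Hasse diagram of $\preccurlyeq_\sigma^\equiv$ given in \cref{subsec:latticeCongruencesArcIdeals}, which is exactly what you do. The orientation bookkeeping for the down arcs is also handled correctly.
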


As already mentioned in~\cite[Sect.~2.4]{Pilaud-arcDiagramAlgebra}, the $\equiv$-poset~$\preccurlyeq_\sigma^\equiv$ can also be obtained by an insertion procedure.
We again skip the precise description as it is not needed in this paper.

\begin{example}
For the sylvester congruence~$\equiv_\textrm{sylv}$ of \cref{exm:sylvesterCongruence}, the $\equiv$-poset~$\preccurlyeq_\sigma^\equiv$ is the binary search tree obtained by insertion of the permutation~$\sigma$, as described in~\cite{HivertNovelliThibon-algebraBinarySearchTrees}.
\end{example}


\subsection{Quotient fans and quotientopes}
\label{subsec:quotientFansQuotientopes}

Fix a lattice congruence~$\equiv$ of the weak order.
Geometric realizations of the quotient~$\f{S}_n / {\equiv}$ were studied in~\cite{Reading-HopfAlgebras, PilaudSantos-quotientopes, PadrolPilaudRitter}.

\begin{theorem}[\cite{Reading-HopfAlgebras}]
The cones obtained by gluing together the maximal cones of the braid fan~$\braidFan$ corresponding to permutations in the same congruence class of~$\equiv$ define the \defn{quotient~fan}~$\quotientFan$.
\end{theorem}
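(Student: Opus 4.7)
The plan is to identify each glued union as the cone associated with the $\equiv$-poset from \cref{subsec:latticeCongruencesArcIdeals}, and then to verify the fan axioms.

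\emph{Step 1: convexity of each glued piece.} Fix a congruence class $C$ of $\equiv$ and pick any $\sigma \in C$. Since $C$ equals the weak order interval $[\projDown(\sigma), \projUp(\sigma)]$, its elements are precisely the linear extensions of the poset $\preccurlyeq^\equiv_\sigma$. I claim that
\[
\bigcup_{\tau \in C} \Cone(\tau) \; = \; \set{x \in \R^n}{x_i \le x_j \text{ whenever } i \preccurlyeq^\equiv_\sigma j}.
\]
The inclusion $\subseteq$ holds because every $\tau \in C$ is a linear extension of $\preccurlyeq^\equiv_\sigma$. For $\supseteq$, given $x$ in the right-hand side, pick any total order on $[n]$ refining $\preccurlyeq^\equiv_\sigma$ and compatible with the ordering of the coordinates of $x$ (breaking ties arbitrarily); the resulting permutation $\tau$ lies in $C$, and $x \in \Cone(\tau)$ by construction. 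Hence each glued union is a convex polyhedral cone.

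\emph{Step 2: fan axioms.} Covering $\R^n$ is inherited immediately from the braid fan together with the fact that congruence classes partition $\f{S}_n$. What remains is to show that faces of glued cones are themselves glued cones, and that intersections of glued cones are common faces. To establish both simultaneously, I would extend $\equiv$ to an equivalence $\tilde\equiv$ on ordered set partitions of $[n]$ (equivalently, on braid cones of all dimensions) by declaring $\mu \tilde\equiv \nu$ when the linear extensions of $\preccurlyeq_\mu$ and of $\preccurlyeq_\nu$ meet the same $\equiv$-classes. The compatibility of $\equiv$ with meets and joins forces each $\tilde\equiv$-class to be an interval in the face lattice of the permutahedron, so the argument of Step~1 (applied to preposets rather than permutations) identifies each $\tilde\equiv$-class glued union with a convex cone. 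The faces of the maximal glued cone over $C$ then correspond exactly to these lower-dimensional $\tilde\equiv$-class cones, and the intersection of any two maximal glued cones is a union of braid cones that manifestly forms a $\tilde\equiv$-class.

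The main obstacle is the face-stability of $\tilde\equiv$: whenever a face of the glued cone over $C$ contains a single braid cone $\Cone(\mu)$, it must contain every braid cone $\Cone(\nu)$ with $\mu \tilde\equiv \nu$. This is precisely where the lattice structure of $\equiv$ is decisive: passing from $\mu$ to a $\tilde\equiv$-equivalent $\nu$ corresponds to a chain of meet/join moves among permutations in $C$, each of which can only tighten, never break, the defining inequalities $x_i \le x_j$ of the face. Once this face-stability is secured, the fan axioms follow by induction on codimension.
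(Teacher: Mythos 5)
This theorem is not proved in the paper at all: it is imported verbatim from \cite{Reading-HopfAlgebras}, so there is no in-paper argument to compare against. Judged on its own terms, your Step~1 is correct and is indeed the easy half of Reading's theorem: a congruence class is the set of linear extensions of the poset~$\preccurlyeq^\equiv_\sigma$, and the union of the chambers indexed by the linear extensions of a poset is exactly the preposet cone, so each glued maximal piece is convex. Your tie-breaking argument for the reverse inclusion is fine.

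The genuine gap is in Step~2, which is precisely where all the content of the theorem lives. Convexity of each glued piece is \emph{not} sufficient for the collection to be a fan: two convex unions of braid cones can intersect in a set that is not a face of either, and Reading's work contains examples of equivalence relations whose classes are all order-convex (hence glue into convex cones) but whose glued cones do not form a fan. So the lattice-congruence hypothesis must be used in an essential, quantitative way to prove face-stability and the common-face property, and your proposal never actually does this. Concretely: (a) the assertion that ``the compatibility of $\equiv$ with meets and joins forces each $\tilde\equiv$-class to be an interval in the face lattice of the permutahedron'' is unsubstantiated, and it is not even clear what it would mean or why it would imply what you need (the relevant structure on the set of chambers containing a fixed face $\Cone(\mu)$ is the facial weak-order interval and the congruence it inherits, not an interval of the face lattice); (b) the claim that the intersection of two maximal glued cones ``manifestly forms a $\tilde\equiv$-class'' is exactly the statement to be proved, not an observation; and (c) the sentence about meet/join moves that ``can only tighten, never break, the defining inequalities'' is a heuristic with no precise content --- it is not defined what a meet/join move between ordered set partitions is, nor why such a move preserves membership in a given face of a glued cone. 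As you yourself note, ``once this face-stability is secured'' the rest follows; but securing it is the theorem. A complete argument (as in \cite{Reading-HopfAlgebras}) has to analyze, for each face $F$ of the braid fan, the restriction of $\equiv$ to the interval of chambers containing $F$ via the projections $\projDown$ and $\projUp$, and prove that the resulting equivalence on faces is compatible with taking faces of the glued cones; none of that machinery is present here.
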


Each cone of the quotient fan~$\quotientFan$ is the \defn{preposet cone} $\Cone({\preccurlyeq}) \eqdef \set{\b{x} \in \R^n}{x_i \le x_j \text{ for all } i \preccurlyeq j}$ of some preposet~$\preccurlyeq$ on~$[n]$.
Recall that a preposet~$\preccurlyeq$ (\ie a reflexive and transitive binary relation) on~$[n]$ defines an equivalence relation~${\simeq} \eqdef \bigset{\{i,j\} \in \binom{[n]}{2}}{i \preccurlyeq j \text{ and } j \preccurlyeq i}$ and a poset~${\preccurlyeq} / {\simeq}$ on the equivalence classes of~$\simeq$.
The Hasse diagram of~$\preccurlyeq$ is the Hasse diagram (\ie the oriented graph of cover relations) of the poset~${\preccurlyeq} / {\simeq}$, and we draw it as a graph where the equivalence classes of~$\simeq$ are bubbles.
Note that the dimension of~$\Cone({\preccurlyeq})$ is the number of nodes of the Hasse diagram of~$\preccurlyeq$.
Observe also that a preposet cone $\Cone({\preccurlyeq})$ is simplicial if and only if the Hasse diagram of~$\preccurlyeq$ is a forest, and in this case its faces are the preposet cones of the preposets whose Hasse diagrams are obtained by edge contraction in the Hasse diagram of~$\preccurlyeq$ (merging the two equivalence classes at the endpoints of the edge, see also \cref{def:contraction}).
We call \defn{$\equiv$-preposets} (resp.~\defn{$\equiv$-posets}) the preposets (resp.~posets) whose cone is a cone of~$\quotientFan$.
The $\equiv$-posets are precisely the posets~$\preccurlyeq^\equiv_\sigma$ described in~\cref{subsec:latticeCongruencesArcIdeals} for any choice of representatives~$\sigma$ in~$\f{S}_n/{\equiv}$.

Finally, we recall the following result of~\cite{PilaudSantos-quotientopes, PadrolPilaudRitter}.
In this paper, we will not use the precise construction, only the fact that these polytopes exist.

\begin{theorem}[\cite{PilaudSantos-quotientopes, PadrolPilaudRitter}]
The quotient fan~$\quotientFan$ is the normal fan of a \defn{quotientope}~$\Quotientope$.
\end{theorem}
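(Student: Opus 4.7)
The plan is to construct the quotientope explicitly as a Minkowski sum of polytopal building blocks indexed by the arcs of~$\arcs_\equiv$, following the strategy of~\cite{PilaudSantos-quotientopes}. The starting point is Reading's notion of \emph{shards}: each wall of the braid fan~$\braidFan$ (a codimension-one face separating two adjacent maximal chambers) is cut into pieces called shards, and these shards are in natural bijection with the arcs of~$\arcs_n$ via the bijection between arcs and join-irreducible permutations. Under this correspondence, the quotient fan~$\quotientFan$ is exactly the coarsening of~$\braidFan$ obtained by erasing every wall whose associated arc does not belong to~$\arcs_\equiv$, and it can therefore be expressed as the common refinement, over all~$\arc \in \arcs_\equiv$, of the coarsenings of~$\braidFan$ that retain only the shards of~$\arc$.

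The key technical step is to construct, for each arc~$\arc \in \arcs_n$, a polytope~$P_\arc$ (a \emph{shard polytope}) whose normal fan is exactly the coarsening of~$\braidFan$ retaining only the walls associated with shards of~$\arc$. Concretely, for~$\arc = (i,j,A,B)$ one would take~$P_\arc$ to be the convex hull of a carefully chosen family of $0/1$ vectors in~$\R^n$, namely indicator vectors of sets of the form~$\{j\} \cup A \cup S$ where~$S$ ranges over appropriate subsets of~$B$. One then verifies, by computing the vertex-facet incidences directly, that each pair of maximal chambers of~$\braidFan$ corresponding to permutations differing by swapping an ascent or a descent is glued in the normal fan of~$P_\arc$ if and only if the arc created by this swap is not~$\arc$. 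Assuming this has been established, the quotientope can be defined as
\[
\Quotientope \eqdef \Perm + \sum_{\arc \in \arcs_\equiv} P_\arc.
\]
Since the normal fan of a Minkowski sum is the common refinement of the normal fans of the summands, the normal fan of~$\Quotientope$ retains precisely those walls of~$\braidFan$ whose arc lies in~$\arcs_\equiv$, and equals~$\quotientFan$. The Minkowski summand~$\Perm$ contributes all basic walls, which keeps the construction well-defined for essential congruences.

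The main obstacle is the first step, namely producing the shard polytopes~$P_\arc$ and proving that their normal fans collapse exactly the prescribed walls. This requires a fine combinatorial analysis: for each maximal chamber~$\Cone(\sigma)$ of~$\braidFan$, one has to identify the unique vertex of~$P_\arc$ maximizing the corresponding linear functional, and then check that two adjacent chambers have the same maximizer precisely when the wall between them is not a shard of~$\arc$. An alternative route, following~\cite{PadrolPilaudRitter}, is to replace the explicit shard polytopes by a family of convex piecewise-linear functions on~$\R^n$ indexed by~$\arcs_\equiv$ whose sum has~$\quotientFan$ as its domain of linearity; the quotientope is then recovered as the Newton polytope of this sum. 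In either approach, the heart of the argument is the verification that the chosen summands induce exactly the shard structure prescribed by the arc ideal, after which the Minkowski-sum formalism handles the rest.
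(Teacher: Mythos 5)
The paper does not prove this statement: it explicitly recalls it from \cite{PilaudSantos-quotientopes, PadrolPilaudRitter} and notes that only the existence of the polytope is used later, so there is no in-paper argument to compare against. Your sketch follows the shard-polytope route, which is the approach of \cite{PadrolPilaudRitter}; note that you have the two attributions reversed, since \cite{PilaudSantos-quotientopes} is the paper that realizes the quotient fan via an explicit height/facet-inequality function summed over shards, while \cite{PadrolPilaudRitter} is the one that introduces shard polytopes and Minkowski sums. More importantly, your proposal defers the entire mathematical content of the theorem: the construction of the polytopes~$P_\arc$ and the verification that the walls of the normal fan of~$P_\arc$ are exactly the shard of~$\arc$ is precisely what has to be proved, and ``assuming this has been established'' leaves the theorem unproved.

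There is also a concrete error in the assembling step. The normal fan of a Minkowski sum is the common refinement of the normal fans of the summands, and the normal fan of~$\Perm$ is the full braid fan~$\braidFan$, which refines every candidate coarsening. Hence your formula $\Quotientope \eqdef \Perm + \sum_{\arc \in \arcs_\equiv} P_\arc$ always has normal fan equal to~$\braidFan$, i.e.\ it realizes the trivial congruence no matter what~$\arcs_\equiv$ is. The correct construction takes the Minkowski sum over~$\arcs_\equiv$ alone; essentialness is already guaranteed because every essential arc ideal contains the basic arcs~$(i,i+1,\varnothing,\varnothing)$, whose shard polytopes are the segments~$[\b{e}_i, \b{e}_{i+1}]$ and contribute the walls~$x_i = x_{i+1}$ needed for the fan to be complete and essential.
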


\begin{proposition}[\cite{Reading-HopfAlgebras, PilaudSantos-quotientopes, PadrolPilaudRitter}]
The Hasse diagram of the quotient lattice~$\f{S}_n / {\equiv}$ is the dual graph of the quotient fan~$\quotientFan$, or the graph of the quotientope~$\Quotientope$, oriented in the linear direction~$\b{\alpha} \eqdef \sum_{i \in [n]} (2i-n-1) \, \b{e}_i$.
\end{proposition}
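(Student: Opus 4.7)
The plan is to argue this in three steps: first transfer the analogous known statement for the weak order / braid fan / permutahedron to the quotient setting, then verify that fan adjacencies match quotient cover relations, and finally check the orientation.

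First, I would use the already stated fact (recalled in \cref{subsec:weakOrder}) that the Hasse diagram of the weak order on~$\f{S}_n$ is the dual graph of the braid fan~$\braidFan$ oriented by~$\b{\alpha}$, together with the theorem (just above) that~$\quotientFan$ is obtained by gluing the maximal cones of~$\braidFan$ along the congruence classes of~${\equiv}$, and that~$\quotientFan$ is the normal fan of~$\Quotientope$. In particular, the maximal cones of~$\quotientFan$ are in bijection with the $\equiv$-classes, i.e.\ with the vertices of~$\f{S}_n/{\equiv}$. Since a polytope and its normal fan have isomorphic face lattices (in reverse), it is equivalent to show that the (oriented) adjacency graph of the maximal cones of~$\quotientFan$ coincides with the (oriented) Hasse diagram of~$\f{S}_n/{\equiv}$.

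Next, I would analyze when two distinct maximal cones~$\Cone(X)$ and~$\Cone(Y)$ of~$\quotientFan$ share a facet. Because each such cone is the union of the braid chambers~$\Cone(\sigma)$ for $\sigma$ in the corresponding $\equiv$-class, sharing a facet in~$\quotientFan$ is equivalent to the existence of~$\sigma \in X$ and~$\tau \in Y$ whose chambers share a facet in~$\braidFan$ (the relevant braid wall is not killed by the gluing precisely because~$\sigma \not\equiv \tau$). Sharing a facet in~$\braidFan$ in turn means~$\sigma$ and~$\tau$ differ by swapping two adjacent entries, i.e., one covers the other in the weak order. By definition of the quotient lattice, this weak order cover descends to a comparison between~$X$ and~$Y$ in~$\f{S}_n/{\equiv}$; moreover it is a cover relation, since any intermediate class~$Z$ with $X < Z < Y$ would have a representative strictly between~$\sigma$ and~$\tau$, contradicting that $\sigma \lessdot \tau$. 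Conversely, every cover~$X \lessdot Y$ of~$\f{S}_n/{\equiv}$ lifts to a weak order cover $\sigma \lessdot \tau$ with $\sigma \in X$ and $\tau \in Y$ (one can take $\sigma = \projUp(x)$ for any $x \in X$ and its image in~$Y$), producing the required shared facet.

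Finally, for the orientation, I would observe that~$\b{\alpha}$ is generic with respect to~$\quotientFan$ (since it is generic for the finer fan~$\braidFan$), so it induces a well-defined orientation on every edge of the dual graph of~$\quotientFan$. If $X \lessdot Y$ in the quotient is realized by a weak order cover $\sigma \lessdot \tau$, then $\b{\alpha}$ points from $\Cone(\sigma)$ to $\Cone(\tau)$ in the braid fan, hence from $\Cone(X)$ to $\Cone(Y)$ in~$\quotientFan$. I expect the main delicate point to be verifying that every wall between two maximal cones~$\Cone(X)$ and~$\Cone(Y)$ of~$\quotientFan$ is supported by a braid hyperplane (so that it really corresponds to a single adjacent transposition and thus to a weak order cover relation, rather than to a more complicated identification produced by the gluing); this follows because the common facet of $\Cone(X)$ and $\Cone(Y)$ is the union of facets of some braid chambers, all of which lie on braid hyperplanes, and any chamber adjacency across one of these facets is exactly an adjacent transposition. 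Once this is settled, the bijection between cover relations of~$\f{S}_n/{\equiv}$ and dual-graph edges of~$\quotientFan$, together with the matching of orientations, yields the claim.
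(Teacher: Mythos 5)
This proposition is recalled from \cite{Reading-HopfAlgebras, PilaudSantos-quotientopes, PadrolPilaudRitter}; the paper gives no proof of it, so there is nothing internal to compare against. Your argument is a correct reconstruction of the standard proof from those references, and the step you flag as delicate (that every wall of~$\quotientFan$ between two maximal cones is supported on a single braid hyperplane, hence corresponds to a surviving adjacent-transposition wall of~$\braidFan$) is handled correctly: a facet of a convex cone spans one hyperplane, and it is covered by facets of braid chambers. The two places where you are loose are purely lattice-theoretic: (a) your justification that $\sigma \lessdot \tau$ with $\sigma \not\equiv \tau$ descends to a \emph{cover} $X \lessdot Y$ in the quotient (``any intermediate class~$Z$ would have a representative strictly between~$\sigma$ and~$\tau$'' does not follow directly from the definition of the quotient order), and (b) the lifting of a quotient cover to a weak order cover, where ``$\projUp(x)$ and its image in~$Y$'' is not a construction. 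Both facts are true, but they rest on the congruence classes being intervals and the projections $\projDown,\projUp$ being order-preserving, which together give Reading's description of the quotient Hasse diagram as the contraction of the weak order Hasse diagram along congruence classes; you should either cite that or prove it, rather than treat it as obvious. With that fact in hand, the rest of your argument (genericity of~$\b{\alpha}$ and the matching of orientations) goes through.
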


\begin{example}
\label{exm:sylvesterFanAssociahedron}
\begin{figure}
	\capstart
	\centerline{\includegraphics[scale=.48]{TamariLattice4} \; \includegraphics[scale=.6]{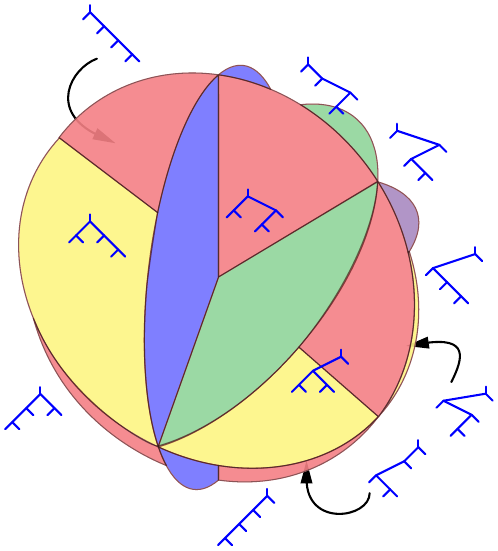} \hspace{-.3cm} \includegraphics[scale=.6]{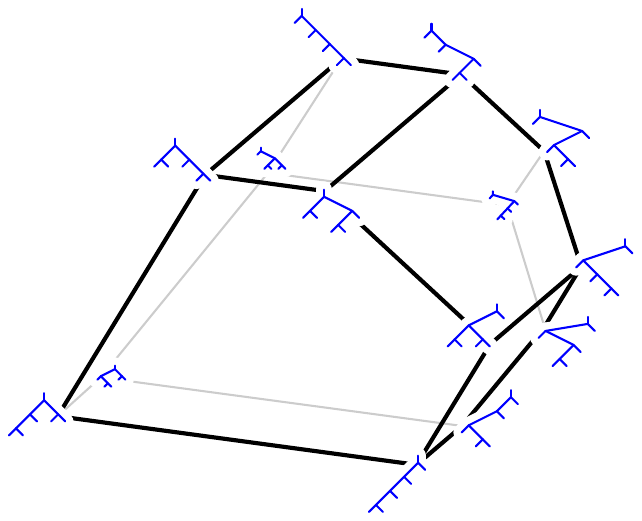}}
	\caption{The cover graph of the Tamari lattice (left) can be seen as the dual graph of the sylvester fan~$\quotientFan[\textrm{sylv}]$ (middle) or as the graph of J.-L.~Loday's \mbox{associahedron (right).~\cite[Fig.~5]{PadrolPilaudRitter}}}
	\label{fig:Tamari4}
\end{figure}
For the sylvester congruence~$\equiv_\textrm{sylv}$ of \cref{exm:sylvesterCongruence}, the quotient fan~$\quotientFan[\textrm{sylv}]$ has a cone~$\Cone(S) \eqdef \set{\b{x} \in \hyp}{\b{x}_a \le \b{x}_b \text{ if there is a directed path from } a \text{ to } b \text{ in } S}$ for each Schröder tree~$S$ with $n+1$ leaves.
In particular, $\quotientFan[\textrm{sylv}]$ has a chamber for each binary tree with $n+1$ leaves (or equivalently $n$ nodes).
See \cref{fig:Tamari4}\,(middle).
The quotient fan~$\quotientFan[\textrm{sylv}]$ is the normal fan of the classical associahedron~$\Asso$ of~\cite{ShniderSternberg, Loday}, defined as the convex hull of the points~$\sum_{j \in [n]} \ell(T,j) \, r(T,j) \, \b{e}_j$ for all binary trees~$T$ on~$n$ nodes, where $\ell(T,j)$ and~$r(T,j)$ respectively denote the numbers of leaves in the left and right subtrees of the node~$j$ of~$T$ (labeled in inorder).
See \cref{fig:Tamari4}\,(right).
\end{example}


\subsection{Relevant congruences}
\label{subsec:relevantCongruences}

\enlargethispage{.5cm}
We briefly gather some relevant congruences of the weak order:

\begin{enumerate}
\item \label{item:trivialCongruence}
The \defn{trivial congruence}~$\equiv_\textrm{triv}$ is defined by the ideal~$\arcs_n$ of all arcs.
It has a congruence class for each permutation.
The quotient~$\f{S}_n/{\equiv_\textrm{triv}}$ is the weak order, the quotient fan~$\quotientFan[\textrm{triv}]$ is the braid fan, and the quotientope~$\Quotientope[\textrm{triv}]$ is the permutahedron~$\Perm$.

\item \label{item:sylvesterCongruence}
The \defn{sylvester congruence} is defined by the ideal~$\arcs_\textrm{sylv} \eqdef \set{(i, j, {]i,j[}, \varnothing)}{1 \le i < j \le n}$ of up arcs.
It has a congruence class for each binary tree, given by the linear extensions of this tree (when labeled in inorder and oriented towards its root)~\cite{Tonks, HivertNovelliThibon-algebraBinarySearchTrees}.
The quotient~$\f{S}_n/{\equiv_\textrm{sylv}}$ is the Tamari lattice, the quotient fan~$\quotientFan[\textrm{sylv}]$ is the sylvester fan, and the quotientope~$\Quotientope[\textrm{sylv}]$ is the associahedron~$\Asso$ of~\cite{ShniderSternberg, Loday}.

\item \label{item:recoilCongruence}
The \defn{recoil congruence}~$\equiv_\textrm{rec}$ is defined by the ideal~$\arcs_\textrm{rec} = \set{(i, i+1, \varnothing, \varnothing)}{i \in [n-1]}$ of basic arcs.
It has a congruence class for each subset~$I \subseteq [n-1]$ given by the permutations whose recoils (descents of the inverse) are at positions in~$I$.
The quotient~$\f{S}_n/{\equiv_\textrm{rec}}$ is the boolean lattice, the quotient fan~$\quotientFan[\textrm{rec}]$ is defined by the hyperplanes~$x_i = x_{i+1}$ for~$i \in [n-1]$, and the quotientope~$\Quotientope[\textrm{rec}]$ is the parallelotope~$\sum_{i \in [n-1]} [\b{e}_i, \b{e}_{i+1}]$.

\item \label{item:CambrianCongruence}
For an arc~$\arc \eqdef (i, j, A, B) \in \arcs_n$, the \defn{$\arc$-Cambrian congruence}~$\equiv_\arc$ is defined by the ideal~$\arcs_\arc$ of the arc poset generated by~$\arc$.
It has a congruence class for each  $\arc$-Cambrian tree~\cite{LangePilaud, ChatelPilaud, PilaudPons-permutrees}.
The quotient~$\f{S}_n / {\equiv_\arc}$ is the \defn{$\arc$-Cambrian lattice}~\cite{Reading-CambrianLattices}, the quotient fan~$\quotientFan[\arc]$ is the \defn{$\arc$-Cambrian fan} of~\cite{ReadingSpeyer} and the quotientope~$\Quotientope[\arc]$ is the \defn{$\arc$-associahedron} of~\cite{HohlwegLange}.

\item \label{item:permutreeCongruence}
For~$\decoration \in \Decorations^n$, the \defn{$\decoration$-permutree congruence}~$\equiv_\decoration$ is defined by the ideal~$\arcs_\decoration$ of arcs which do not pass above the points~$j$ with~$\decoration_j \in \{\upCirc, \upDownCirc\}$ nor below the points~$j$ with~${\decoration_j \in \{\downCirc, \upDownCirc\}}$.
Its congruence classes are the sets of linear extensions of the \linebreak $\decoration$-permutrees~\cite{PilaudPons-permutrees}.
The quotient~$\f{S}_n / {\equiv_\decoration}$ is the \defn{$\decoration$-permutree lattice}~\cite{PilaudPons-permutrees}, the quotient fan~$\quotientFan[\decoration]$ is the \defn{$\decoration$-permutree fan} of~\cite{PilaudPons-permutrees} and the quotientope~$\Quotientope[\decoration]$ is the \defn{$\decoration$-permutreehedron} of~\cite{PilaudPons-permutrees}.

\item \label{item:abSashesCongruence}
For~$a, b \ge 1$ define the \defn{$(a,b)$-sashes congruence}~$\equiv_{\textrm{sh}(a,b)}$ by the ideal~$\arcs_{\textrm{sh}(a,b)}$ of arcs which do not pass above $a$ consecutive points, nor below $b$ consecutive points.
Note that~${{\equiv_{\textrm{sh}(n,n)}} = {\equiv_{\textrm{triv}}}}$, that~${\equiv_{\textrm{sh}(1,n)}} = {\equiv_{\textrm{sylv}}}$, that~${\equiv_{\textrm{sh}(1,1)}} = {\equiv_{\textrm{rec}}}$.
The $(1,2)$-sashes congruence was studied in~\cite{Law} and its equivalence classes are counted by the Pell numbers.
The $(2,2)$-sashes congruence was studied in~\cite{ChatelPilaud} as a particular Baxter-Cambrian congruence, and its equivalence classes are counted by central binomial coefficients.
The ${(1,n-1)}$-sashes congruence is studied in~\cite{LaniniNovelli}, and the generating function of its number of equivalence classes is the inverse of the truncation of~${1 + x/(1-x-\sum_{i=0}^{n-1} C_i x^{i+1})}$, where~$C_i \eqdef \frac{1}{i+1} \binom{2i}{i}$ is the $i$th Catalan number.

\item \label{item:BaxterCongruence}
The \defn{diagonal rectangulation congruence}~$\equiv_\textrm{DR}$ (or \defn{Baxter congruence}) is defined by the ideal of arcs that do not cross the horizontal axis, \ie~$\arcs_\textrm{DR} \!=\! \set{(i, j, A, B) \in \arcs_n}{A \!=\! \varnothing \text{ or } B \!=\! \varnothing}$.
Its congruence classes correspond to diagonal rectangulations~\cite{LawReading} or equivalently pairs of twin binary trees~\cite{Giraudo}, which are counted by the Baxter numbers.
The quotient $\f{S}_n / {\equiv_\textrm{DR}}$ is the \defn{diagonal rectangulation lattice}~\cite{LawReading}, the quotientope~$\Quotientope[\textrm{DR}]$ is the \defn{weak rectangulotope} defined as the Minkowski sum of two opposite Loday associahedra~\cite{LawReading,CardinalPilaud}.

\item \label{item:GenericRectangulationCongruence}
The \defn{generic rectangulation congruence}~$\equiv_\textrm{GR}$ is defined by the ideal of arcs that cross the horizontal axis at most once, \ie~$\arcs_\textrm{GR} = \set{(i, j, A, B) \in \arcs_n}{A \text{ and } B \text{ are both intervals}}$.
Its congruence classes correspond to generic rectangulations~\cite{Reading-rectangulations}, \ie any rectangulation of the square with no $4$ incident rectangles, considered equivalent under wall slides.
The quotient~$\f{S}_n / {\equiv_\textrm{GR}}$ is the \defn{generic rectangulation lattice}~\cite{Reading-rectangulations}, and the quotientope~$\Quotientope[\textrm{GR}]$  is the strong rectangulotope studied in~\cite{CardinalPilaud}.

\item 
For~$p \ge 1$, the \defn{$p$-recoil congruence}~$\equiv_{p\textrm{-rec}}$ is defined by the ideal of arcs of length at most~$p$, \ie~$\arcs_{p\textrm{-rec}} = \set{(a, b, A, B) \in \arcs_n}{b-a \le p}$.
Its congruence classes correspond to acyclic orientations of the graph on~$[n]$ with edges~$(a,b)$ for~$|a-b| \le p$.
See \cite{NovelliReutenauerThibon, Reading-HopfAlgebras, Pilaud-brickAlgebra}.
The quotientope~$\Quotientope[p\textrm{-rec}]$ is the graphical zonotope of the graph on~$[n]$ with edges~$\{i,j\}$ for~$|i-j| \le p$.

\item
For~$p \ge 1$, the \defn{$p$-twist congruence}~$\equiv_{p\textrm{-twist}}$ is defined by the ideal of arcs passing below at most~$p$ points, \ie~$\arcs_{p\textrm{-twist}} = \set{(a, b, A, B) \in \arcs_n}{|B| \le p}$.
Its congruence classes correspond to certain acyclic pipe dreams~\cite{Pilaud-brickAlgebra}.
The quotientope~$\Quotientope[p\textrm{-twist}]$ is a brick polytope of~\cite{PilaudSantos-brickPolytope,PilaudStump-brickPolytope}.

\item
The graph associahedra congruences of~\cite{BarnardMcConville} were defined by the normal fan of the associahedra of filled graphs. Equivalently they are the congruences~$\equiv$ whose subarc minimal non arcs are up arcs.
\end{enumerate}


\subsection{Simple congruences}
\label{subsec:simpleCongruences}

For any~$1 \le i < j \le n$, we always denote by~$u(i,j) \eqdef (i, j, {]i,j[}, \varnothing)$ (resp.~by~$d(i,j) \eqdef (i, j, \varnothing, {]i,j[})$) the \defn{up} (resp.~\defn{down}) \defn{arc} joining~$i$ to~$j$.
We say that $\arc$ is a \defn{subarc minimal arc} in a collection of arcs~$\arcs$ if no strict subarc of~$\arc$ belongs to~$\arcs$.
This paper focuses on the following congruences of the weak order.

\begin{definition}[{\cite[Sect.~4.4]{HoangMutze}}]
\label{def:simpleCongruence}
A lattice congruence~$\equiv$ of the weak order is \defn{simple} if all subarc minimal arcs in~$\arcs_n \ssm \arcs_\equiv$ are up arcs or down arcs.
\end{definition}

\begin{example}
Among the examples of \cref{subsec:relevantCongruences},
\begin{itemize}
\item the trivial congruence,
\item the sylvester congruence~\cite{Tonks, HivertNovelliThibon-algebraBinarySearchTrees},
\item the recoil congruence,
\item the Cambrian congruences~\cite{Reading-CambrianLattices, ChatelPilaud},
\item the permutree congruences~\cite{PilaudPons-permutrees},
\item the $(a,b)$-sashes congruences~\cite{Law,LaniniNovelli},
\item the graph associahedra congruences of~\cite{BarnardMcConville},
\end{itemize}
are all simple congruences of the weak order.
\end{example}

\begin{remark}
\label{rem:permutreeCongruences}
Note that the permutree congruences of~\cite{PilaudPons-permutrees} are by definition the simple congruences for which all subarc minimal arcs in~$\arcs_n \ssm \arcs_\equiv$ have length~$2$, meaning that they are of the form~$u(i-1, i+1)$ or~$u(i-1, i+1)$ for some~$2 \le i \le n-1$.
\end{remark}

\begin{example}
In contrast, among the examples of \cref{subsec:relevantCongruences},
\begin{itemize}
\item the diagonal rectangulation congruence~\cite{LawReading,Giraudo,CardinalPilaud},
\item the generic rectangulation congruence~\cite{Reading-rectangulations,CardinalPilaud},
\item the $p$-recoil congruence~\cite{NovelliReutenauerThibon, Reading-HopfAlgebras, Pilaud-brickAlgebra},
\item the $p$-twist congruence~\cite{Pilaud-brickAlgebra},
\end{itemize}
are non-simple congruences of the weak order (except when~$n \le 3$ or~$p = 1$).
\end{example}

As all congruences, the simple congruences of the weak order have a natural lattice structure.

\begin{proposition}
\label{prop:simpleCongruenceMeetSemilattice}
The set of simple congruences induces a meet subsemilattice of the congruence lattice of the weak order.
Hence, they form a lattice under refinement.
\end{proposition}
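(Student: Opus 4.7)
The plan is to translate the statement into the language of arc ideals and then exploit the characterization of simple congruences from \cref{def:simpleCongruence} by a short minimality argument.

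Recall from \cref{subsec:latticeCongruencesArcIdeals} that the meet of two congruences corresponds to the intersection of their arc ideals, that is, $\arcs_{{\equiv_1}\meet{\equiv_2}} = \arcs_{\equiv_1} \cap \arcs_{\equiv_2}$. So, writing $\arcs_i \eqdef \arcs_{\equiv_i}$ for brevity, the statement reduces to the following: if $\arcs_1$ and $\arcs_2$ are arc ideals of $\arcs_n$ whose subarc minimal non-arcs (in $\arcs_n \ssm \arcs_i$) are all up or down arcs, then the subarc minimal elements of $\arcs_n \ssm (\arcs_1 \cap \arcs_2)$ are also all up or down arcs.

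To prove this, let $\arc$ be subarc minimal in $\arcs_n \ssm (\arcs_1 \cap \arcs_2)$. Then every proper subarc of $\arc$ lies in $\arcs_1 \cap \arcs_2$, and in particular in both $\arcs_1$ and $\arcs_2$. Moreover $\arc \notin \arcs_1 \cap \arcs_2$, so we may assume (up to exchanging the indices) that $\arc \notin \arcs_1$. But then $\arc$ is a subarc minimal element of $\arcs_n \ssm \arcs_1$, since no proper subarc of $\arc$ lies outside $\arcs_1$. By the simplicity of $\equiv_1$, the arc $\arc$ is therefore an up arc or a down arc, as required. This shows that $\arcs_1 \cap \arcs_2$ defines a simple congruence, so the simple congruences form a meet subsemilattice of the congruence lattice of the weak order.

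To conclude that the simple congruences form a lattice under refinement, it suffices to note that the trivial congruence $\equiv_\textrm{triv}$ from \cref{subsec:relevantCongruences}\,\eqref{item:trivialCongruence} is simple (its arc ideal is all of $\arcs_n$, so $\arcs_n \ssm \arcs_{\equiv_\textrm{triv}} = \varnothing$). Hence the simple congruences form a finite meet subsemilattice of the congruence lattice with a common top element, and for any two simple congruences ${\equiv_1}, {\equiv_2}$ the join in the induced lattice of simple congruences is the meet of all simple congruences above both.

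The argument is genuinely short and contains no real obstacle: the only subtle point is to remember that the arc ideal of a meet is the intersection of the arc ideals, after which the subarc minimality argument is immediate. Note that the analogous statement for joins is not claimed (and in general is false): if $\arc$ is a subarc minimal element of $\arcs_n \ssm (\arcs_1 \cup \arcs_2)$, then $\arc$ avoids both $\arcs_1$ and $\arcs_2$, but it need not be subarc minimal in either $\arcs_n \ssm \arcs_1$ or $\arcs_n \ssm \arcs_2$ (some proper subarc of $\arc$ could lie in $\arcs_1 \ssm \arcs_2$ and another in $\arcs_2 \ssm \arcs_1$), so the above argument breaks down. This is why we obtain only a meet subsemilattice of the congruence lattice, not a sublattice.
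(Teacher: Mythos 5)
Your argument for meet-closure is exactly the paper's: the identity $\arcs_n \ssm \arcs_{{\equiv_1}\meet{\equiv_2}} = (\arcs_n \ssm \arcs_{\equiv_1}) \cup (\arcs_n \ssm \arcs_{\equiv_2})$ together with the observation that a subarc minimal element of this union must be subarc minimal in one of the two pieces; you merely spell out the minimality step that the paper leaves implicit, and you do so correctly.

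There is, however, a slip in your justification of the final ``hence a lattice'' step. Under the order used here (where the meet of congruences is their intersection), the trivial congruence is the \emph{minimum} of the congruence lattice, not a common top element: it is the finest congruence, contained as a relation in every other one. A finite meet-semilattice automatically possesses a minimum (the meet of all its elements), and that alone does not force it to be a lattice --- what is needed is a \emph{maximum}, so that any two elements have at least one common upper bound and their join can be taken as the meet of all such. The maximum of the congruence lattice is the coarsest congruence, the one with a single class and arc ideal $\varnothing$; this congruence is also simple, since the subarc minimal elements of $\arcs_n \ssm \varnothing = \arcs_n$ are the basic arcs $(i,i+1,\varnothing,\varnothing)$, which are simultaneously up and down arcs. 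With the top element identified correctly, your ``join equals meet of all common simple upper bounds'' argument goes through, and your closing remark explaining why the same reasoning fails for joins is a worthwhile observation that the paper makes only by example (the Baxter congruence as the join of the sylvester and anti-sylvester congruences).
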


\begin{proof}
For two congruences~$\equiv$ and~$\equiv'$, the non arcs of~${\equiv} \meet {\equiv'}$ are
\[
\arcs_n \ssm \arcs_{{\equiv} \meet {\equiv'}} = \arcs_n \ssm (\arcs_{\equiv} \cap \arcs_{\equiv'}) = (\arcs_n \ssm \arcs_{\equiv}) \cup (\arcs_n \ssm \arcs_{\equiv'})
\]
so that any subarc minimal non-arc of~${\equiv} \meet {\equiv'}$ is a subarc minimal non-arc of~${\equiv}$ or a subarc minimal non-arc of~${\equiv'}$.
Thus, the simple congruences form a finite meet semilattice, hence a lattice under refinement.
\end{proof}

\begin{remark}
Note that the simple congruences do not induce a sublattice of the congruence lattice.
For instance, the Baxter congruence is not simple, while it is the join (in the congruence lattice) of the sylvester and anti-sylvester congruence (where the allowed arcs are exactly the down arcs), which are both simple congruences.
\end{remark}

\begin{remark}
As already observed in~\cite[Sect.~4.4]{HoangMutze}, the number of simple essential congruences of the weak order on~$\f{S}_n$ is~$C_{n-1}^2$, where~$C_n \eqdef \frac{1}{n+1} \binom{2n}{n}$ is the $n$th Catalan number.
Indeed, the subarc minimal arcs in~$\arcs_n \ssm \arcs_\equiv$ form two (above and below) independent pairwise nonnesting collections of intervals.
The number of all (non-necessarily essential) simple congruences of the weak order on~$\f{S}_n$ is~$\sum_{k \ge 1} \sum_{0 = i_0 < i_1 < \dots < i_k = n} \prod_{i \in [k]} C_{i_j - i_{j-1}}$, and the first few numbers are
\[
1, 2, 7, 38, 274, 2350, 22531, 233292, 2555658, 29232554, 346013450, 4211121946, 52446977292, \dots
\]
\end{remark}

One of the contributions of the present paper is a simple combinatorial proof of the following statement.
This statement was originally proved by H.~Hoang and T.~Mütze in~\cite[Sect.~4.4]{HoangMutze} with a purely combinatorial (but slightly involved) proof.
In their paper on the lattice theory of torsion classes, L.~Demonet, O.~Iyama, N.~Reading, I.~Reiten, and H.~Thomas provided an alternative proof~\cite[Sect.~6.3]{DemonetIyamaReadingReitenThomas} in terms of quiver representation theory.
In this paper, we propose an alternative simple combinatorial proof based on the description of the poset insertion map of \cref{subsec:insertionMap}.

\begin{theorem}[{\cite[Sect.~4.4]{HoangMutze} \& \cite[Sect.~6.3]{DemonetIyamaReadingReitenThomas}}]
\label{thm:simpleCongruences}
The following assertions are equivalent for an essential lattice congruence~$\equiv$ of the weak order:
\begin{enumerate}[(i)]
\item $\equiv$ is a simple congruence,
\item the Hasse diagram of any $\equiv$-poset is a tree,
\item the Hasse diagram of the lattice quotient~$\f{S}_n/{\equiv}$ is regular,
\item the quotient fan~$\quotientFan$ is a simplicial fan,
\item the quotientope~$\Quotientope$ is a simple polytope.
\end{enumerate}
\end{theorem}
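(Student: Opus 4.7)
The equivalences (iii) $\Leftrightarrow$ (iv) $\Leftrightarrow$ (v) are standard polytope and fan dualities: $\Quotientope$ is a simple polytope iff its normal fan $\quotientFan$ is simplicial, and the Hasse diagram of $\f{S}_n/{\equiv}$ being $(n-1)$-regular is equivalent to the dual graph of $\quotientFan$ being $(n-1)$-regular. I would then bridge to (ii) by noting that each maximal cone of $\quotientFan$ has the form $\Cone(\preccurlyeq_\sigma^\equiv)$, with facets in bijection with the cover relations of $\preccurlyeq_\sigma^\equiv$; essentiality of $\equiv$ restricts the lineality of this cone to $\mathbb{R}(1,\dots,1)$, forcing the Hasse diagram to be connected, so simpliciality of the cone is equivalent to the Hasse diagram being a tree.

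The implication (ii) $\Rightarrow$ (i) is the easy direction. Contrapositively, assume some subarc-minimal non-arc $\arc = (i, j, A, B)$ has both $A \neq \emptyset$ and $B \neq \emptyset$. Choose $a \in A$ and $b \in B$, and take the meet-irreducible permutation $\sigma$ whose unique ascent arc is $\arc$ (explicitly, the elements of $A \cup \{i\}$ written in decreasing order followed by those of $\{j\} \cup B$ in decreasing order, suitably bordered by the elements outside $[i, j]$). Since $\arc \notin \arcs_\equiv$, the $\equiv$-class of $\sigma$ contains the permutation obtained by swapping the adjacent values $i$ and $j$, and a direct check shows that the resulting $\equiv$-poset contains the diamond $i \prec a, b \prec j$ with $a$ and $b$ incomparable, yielding a $4$-cycle in the Hasse diagram.

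The main content is the implication (i) $\Rightarrow$ (ii). Fix a simple essential congruence $\equiv$ and a permutation $\sigma$. By \cref{coro:insertionMap}, the cover relations of $\preccurlyeq_\sigma^\equiv$ are indexed by $\boxslash_\equiv(\sigma) \cup \boxbslash_\equiv(\sigma)$. Combined with the connectedness granted above, it suffices to establish the counting identity $|\boxslash_\equiv(\sigma)| + |\boxbslash_\equiv(\sigma)| = n - 1$. My plan is to exhibit a bijection with the adjacent position pairs $\{(p, p+1) : p \in [n-1]\}$. Each adjacent pair is trivially $\prec$-maximal in $\boxslash(\sigma) \cup \boxbslash(\sigma)$, since no proper positional extension exists. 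If $\alpha(p, p+1, \sigma) \in \arcs_\equiv$, then $(p, p+1)$ itself contributes. Otherwise, simplicity of $\equiv$ forces any subarc-minimal obstruction inside $\alpha(p, p+1, \sigma)$ to be a purely one-sided up or down arc, and I would associate to $(p, p+1)$ the unique $\prec$-maximal pair in the restricted set obtained by extending into the side opposite the obstruction.

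The main obstacle is proving this map is a genuine bijection. Both injectivity and surjectivity rely on the non-crossing structure of the arcs and on the key feature of simplicity: obstructions are always one-sided, ruling out ``interleaving'' obstructions that could let a single adjacent pair be associated to two different $\prec$-maximal pairs, or conversely let a single $\prec$-maximal pair be witnessed by no position. Once the counting identity is established, the Hasse diagram is a connected graph on $n$ vertices with $n - 1$ edges, hence a tree. A refinement of the same analysis further constrains each vertex to have at most two parents and at most two children in the Hasse diagram, yielding the stronger separating tree structure that drives the rest of the paper.
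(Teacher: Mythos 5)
Your handling of the standard equivalences and of (ii)$\Rightarrow$(i) follows the paper's route, but the ``direct check'' in (ii)$\Rightarrow$(i) identifies the wrong configuration. The paper takes $a \eqdef \max(A)$ and $b \eqdef \min(B)$ (not arbitrary $a \in A$, $b \in B$ --- the extremal choice is what makes the four subarcs $(i,a,\cdot,\cdot)$, $(a,j,\varnothing,\cdot)$, $(i,b,\cdot,\varnothing)$, $(b,j,\cdot,\cdot)$ land in $\arcs_\equiv$ and produce \emph{cover} relations via \cref{prop:insertionMap}), and the resulting diamond is $a \preccurlyeq i$, $a \preccurlyeq j$, $i \preccurlyeq b$, $j \preccurlyeq b$: it is $i$ and $j$ that are the incomparable middle elements, while $a$ and $b$ are the bottom and top. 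For instance, for $\alpha = (1,4,\{2\},\{3\})$ and $\sigma = 2413$ one gets the cover relations $2 \preccurlyeq 1$, $2 \preccurlyeq 4$, $1 \preccurlyeq 3$, $4 \preccurlyeq 3$, and here $a=2$ and $b=3$ are comparable. Your claimed diamond $i \prec a,b \prec j$ with $a,b$ incomparable is false as stated; the argument is repairable but the check you assert would not go through.

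The genuine gap is in (i)$\Rightarrow$(ii), where your proof is a plan rather than an argument. The proposed bijection between $\boxslash_\equiv(\sigma) \cup \boxbslash_\equiv(\sigma)$ and the adjacent position pairs is never constructed: the assignment ``the unique $\prec$-maximal pair in the restricted set obtained by extending into the side opposite the obstruction'' is not well defined, since a non-arc $\arcUp(p,p+1,\sigma)$ may simultaneously contain a subarc-minimal up-arc obstruction and a subarc-minimal down-arc obstruction (so ``the side opposite the obstruction'' is ambiguous), and neither the meaning of ``the restricted set'' nor the uniqueness of its $\prec$-maximal element is established. Moreover, the inequality $|\boxslash_\equiv(\sigma)| + |\boxbslash_\equiv(\sigma)| \ge n-1$ is automatic (the cone of a connected $\equiv$-poset is full dimensional with lineality $\R(1,\dots,1)$, hence has at least $n-1$ facets), so injectivity buys nothing and the surjectivity you defer \emph{is} the theorem. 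The paper takes a different and complete route: it proves the stronger \cref{prop:simpleImpliesSeparatingTrees} that the Hasse diagram is a separating tree, by verifying the two planar ``wall'' conditions of \cref{lem:separatingTree2} directly from the insertion map --- simplicity forces any obstruction between two children $p < j < q$ of a node $j$ to be a down arc all of whose interior points come after $j$ in $\sigma$, whence it is a subarc of both $\arcUp(i,k,\sigma)$ and $\arcDown(i,k,\sigma)$ and no edge $i$--$k$ can cross the wall at $j$ --- and acyclicity then follows from \cref{lem:separatingTree2} with no edge count at all. To salvage your approach you would have to carry out the full injectivity/surjectivity analysis, which is essentially the involved combinatorial argument the paper is designed to avoid.
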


\begin{proof}
The equivalences (ii) $\iff$ (iii) $\iff$ (iv) $\iff$ (v) are standard.

(i)~$\Longrightarrow$~(ii): See \cref{subsec:posetsSimpleCongruences}.

(ii)~$\Longrightarrow$~(i): Assume that~$\equiv$ has a subarc minimal non arc~$\alpha \eqdef (i, j, A, B)$ with~$A \ne \varnothing \ne B$, let~$a \eqdef \max(A)$ and~$b \eqdef \min(B)$, and let~$\sigma$ be the permutation with arc diagram~$\arcDiagramDown[](\sigma) = \{\alpha\}$. Then the four arcs
\[
(i, a, A \ssm \{a\}, {]i,a[} \cap B)
\qquad
(a, j, \varnothing, {]a,j[} \cap B)
\qquad
(i, b, {]i,b[} \cap A, \varnothing)
\qquad\text{and}\qquad
(b, j, {]b,j[} \cap A, B \ssm \{b\})
\]
all belong to~$\arcs_\equiv$ (otherwise, $\alpha$ would not be a subarc minimal non arc).
It thus immediately follows from~\cref{prop:insertionMap} that the preposet~$\preccurlyeq_\sigma^\equiv$ has cover relations~$a \preccurlyeq_\sigma^\equiv i$, $a \preccurlyeq_\sigma^\equiv j$, $i \preccurlyeq_\sigma^\equiv b$, and $j \preccurlyeq_\sigma^\equiv b$.
Hence, the Hasse diagram of the $\equiv$-poset~$\preccurlyeq_\sigma^\equiv$ is not a tree.
\end{proof}

\begin{remark}
If we considered the non-essential congruences, the $\equiv$-posets would be forests instead of trees.
As the non-essential quotients of the weak order can be understood as Cartesian products of essential quotients of weak orders of smaller rank, we focus on essential congruences in \cref{thm:simpleCongruences} and throughout the paper.
\end{remark}

\begin{remark}
\enlargethispage{.3cm}
The arcs which are neither up nor down arcs correspond to doubly join irreducibles of the weak order on permutations.
One can naturally ask whether a statement similar to \cref{thm:simpleCongruences} could hold for other finite Coxeter groups.
The congruences with a simplicial quotient fan are indeed generated by contracting doubly join irreducibles~\cite[Thm.~1.13]{DemonetIyamaReadingReitenThomas}, but
\begin{itemize}
\item in type~$D_4$, there is a doubly join irreducible whose contraction yields a congruence whose quotient fan is not simplicial~\cite[Exm.~6.4]{DemonetIyamaReadingReitenThomas} (in contrast to \cref{thm:simpleCongruences}),
\item in type~$B_3$, the meet of two simple congruences is not necessarily simple (in contrast to \cref{prop:simpleCongruenceMeetSemilattice}).
\end{itemize}
\end{remark}


\section{Separating trees and posets of simple congruences}
\label{sec:separatingTreesPosetsSimpleCongruences}

In this section, we define separating trees (\cref{subsec:separatingTrees}) and prove that they provide a combinatorial description of the vertices of the simple quotientopes (\cref{subsec:posetsSimpleCongruences}).


\subsection{Separating trees}
\label{subsec:separatingTrees}

For a node~$j$ in a directed tree~$T$, we call \defn{parents} (resp.~\defn{children}) of~$j$ the targets of the outgoing arcs (resp.~the  sources of the incoming arcs) at~$j$ and \defn{ancestor} (resp.~\defn{descendant}) \defn{subtrees} of~$j$ the subtrees attached to them.
This paper focuses on the following family of trees.

\begin{definition}
\label{def:separatingTree}
A \defn{separating tree} is a directed tree on~$[n]$ such that
\begin{itemize}
\item each node has at most two parents and at most two children,
\item a node~$j$ with two parents (resp.~children) \defn{separates} its two ancestor (resp.~descendant) subtrees, meaning that all nodes in the left subtree~$L$ are smaller than~$j$ while all nodes in the right subtree~$R$ are larger than~$j$ (and we then write~$L < j < R$ and~$L < R$).
\end{itemize}
\end{definition}

\begin{example}
\label{exm:separatingTrees}
\cref{fig:separatingTrees} represents some separating trees. We use the following conventions:
\begin{itemize}
\item all edges are oriented from bottom to top, so that we omit to add arrows to specify the direction of the edges,
\item the vertices are ordered increasingly from left to right.
\end{itemize}
\begin{figure}[h]
	\capstart
	\centerline{\includegraphics[scale=.85]{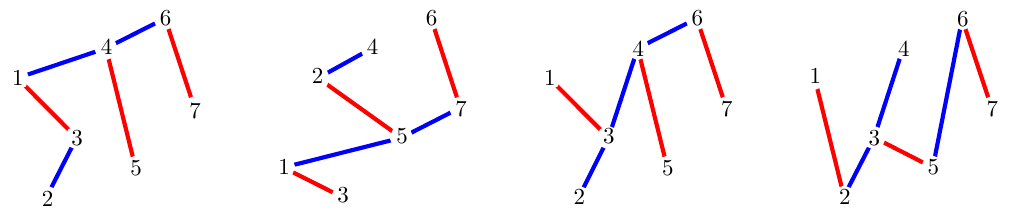}}
	\caption{Some separating trees on~$[7]$.}
	\label{fig:separatingTrees}
\end{figure}
\end{example}

\begin{remark}
\label{rem:separatingTreesVSPermutrees}
Readers familiar with~\cite{PilaudPons-permutrees} might wonder what the difference between permutrees and separating trees is.
The subtle distinction is that empty ancestor or descendant subtrees are allowed in permutrees and not in separating trees.
For instance, in a separating tree, it is impossible to say that a node has two children, one of which is empty.
We will see in \cref{rem:separatingTreesArePermutrees} that any separating tree appears in at least one permutree congruence.
However, simple congruences are much richer than permutree congruences.
\end{remark}

\begin{remark}
The number of separating trees with~$n$ nodes is given by the following table:
\[
\begin{array}{c|ccccccccc}
n & 1 & 2 & 3 & 4 & 5 & 6 & 7 & 8 & \dots \\
\hline
\#\text{ separating trees} & 1 & 2 & 8 & 42 & 264 & 1898 & 15266 & 135668 & \dots
\end{array}
\]
\end{remark}

The end of this section gathers some structural lemmas about separating trees.
We start with two lemmas ensuring the existence of certain paths in a separating tree.

\begin{lemma}
\label{lem:separatingTree1}
Consider three nodes~$1 \le i < j < k \le n$ in a separating tree~$T$. Then
\begin{itemize}
\item if~$i \leftarrow k$ is an edge of~$T$, then there is a directed path in~$T$ joining either~$i$ to~$j$, or~$j$ to~$k$,
\item if~$i \to k$ is an edge of~$T$, then there is a directed path in~$T$ joining either~$j$ to~$i$, or~$k$ to~$j$.
\end{itemize}
\end{lemma}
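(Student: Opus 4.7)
The plan is to prove the first bullet by analyzing the unique undirected path from $i$ to $j$ in $T$; the second bullet then follows by parent/child symmetry, since \cref{def:separatingTree} is preserved under reversing every edge, and applying the first bullet to the reversed tree swaps each directed path from $a$ to $b$ with one from $b$ to $a$. Removing the edge $ki$ splits $T$ into two components $A \ni i$ and $B \ni k$, and I split according to whether $j \in A$ or $j \in B$.

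\emph{Case $j \in A$.} Let $i = v_0, v_1, \dots, v_\ell = j$ be the path in $A$. The goal is to show every edge is oriented $v_s \to v_{s+1}$, yielding the directed path $i \to \cdots \to j$. For the first edge: if $v_1$ were a child of $i$, then $i$ would have two children $v_1$ and $k$; since $k > i$, the separating property at $i$ would force $v_1 < i$ and confine the whole $v_1$-subtree to values $< i$, contradicting $j > i$. For a later anomaly, let $t \ge 1$ be the smallest index with $v_{t-1} \to v_t$ and $v_t \leftarrow v_{t+1}$, so that $v_{t-1}$ and $v_{t+1}$ are both children of $v_t$. By separating, the two descendant subtrees of $v_t$ lie on opposite sides of $v_t$. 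The $v_{t-1}$-side contains $v_{t-1}, \dots, v_0 = i$ (via the all-forward subchain from $v_t$) and also $k$, since the edge $ki$ attaches $k$ to $i$ without crossing $v_t$; the $v_{t+1}$-side contains $j$. Thus $\{i,k\}$ and $\{j\}$ lie on opposite sides of $v_t$, which is incompatible with $i < j < k$ in either orientation.

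\emph{Case $j \in B$.} The argument is symmetric, applied to the path from $j$ to $k$ in $B$, with ``two children'' replaced by ``two parents''. The last edge $v_{m-1} k$ cannot be oriented $v_{m-1} \leftarrow k$, because $v_{m-1}$ would then be a second parent of $k$ besides $i$; the separating property at $k$ (using $i < k$) would force $v_{m-1} > k$ and hence $j > k$, contradicting $j < k$. A first ``valley'' inner vertex with two parents is ruled out by the same trichotomy on $\{i,k\}$ and $\{j\}$. The main obstacle I expect is precisely this bookkeeping at the switch vertex: one must verify that $k$ really lies on the same side of $v_t$ as $i$, which works because the all-forward subpath places $v_t$ as an ancestor of $i$, so that removing $v_t$ leaves the edge $ki$ intact and $k$ attached to the ``$i$-side''.
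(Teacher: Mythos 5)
Your proof is correct and follows essentially the same route as the paper: you take the unique tree path between $j$ and the edge $i\leftarrow k$, rule out any change of direction because a peak/valley vertex would have to separate $j$ from $\{i,k\}$ (impossible since $i<j<k$), and fix the orientation at the endpoint via the separating property at $i$ or $k$. Your version merely makes explicit the case split on which component of $T$ minus the edge contains $j$, and the parent/child symmetry for the second bullet, both of which the paper leaves implicit.
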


\begin{proof}
Assume that~$1 \le i < j < k \le n$ and~$i \leftarrow k$ is an edge of~$T$.
As~$T$ is a tree, there is a (\apriori undirected) path~$\pi$ joining~$j$ and the edge~$i \leftarrow k$.
If~$\pi$ is not a directed path, then there exists some node~$\ell$ along~$\pi$ such that~$\pi$ has either two incoming or two outgoing edges at~$\ell$.
This contradicts the separating property, as $i < j < k$ so that~$\ell$ cannot separate~$j$ from~$\{i,k\}$.
Hence, the path~$\pi$ is directed, either from~$i \leftarrow k$ to~$j$, or from~$j$ to~$i \leftarrow k$.
Again using~$i < j < k$, the path~$\pi$ cannot be incoming at~$i$, nor outgoing from~$k$.
We conclude that $T$ contains a directed path either from~$i$ to~$j$, or from~$j$ to~$k$.
The second part of the statement is symmetric.
\end{proof}

\begin{lemma}
\label{lem:separatingTree3}
Consider a node~$j$ in a separating tree~$T$ with two ancestor (resp.~descendant) subtrees~$L$ and~$R$ with~$L < R$, and let~$M \eqdef \max(L)$ and~$m \eqdef \min(R)$. Then for each~${M < k < m}$, there is a directed path in~$T$ from~$k$ to~$j$ (resp.~from~$j$ to~$k$).
\end{lemma}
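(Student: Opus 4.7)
The plan is to show that for $k \in (M, m) \cap [n]$ with $k \neq j$, the unique undirected path from $k$ to $j$ in $T$ is already directed from $k$ to $j$; the descendant version of the statement then follows by reversing all edge orientations of $T$, an involution that preserves the class of separating trees (swapping the roles of parents and children). First, since $k > M$ and $k < m$, we have $k \notin L \cup R$, so $k$ lies in a descendant subtree $D_c$ of $j$, and the path has the form $\pi = (k = v_0, v_1, \ldots, v_{p-1} = c, v_p = j)$ with its last edge correctly oriented as $c \to j$. Suppose for contradiction that $\pi$ contains a \emph{backward} edge (one oriented opposite to the path direction), and let $i^*$ be the smallest such index. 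The case $i^* = p-1$ is immediately ruled out, since it would make $c$ a parent of $j$ and hence place $c \in L \cup R$, contradicting $c \in D_c$.

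The main case is $i^* \geq 1$: the edges at $v_{i^*}$ are then both incoming (the one from $v_{i^*-1}$ is forward by minimality of $i^*$, and the one from $v_{i^*+1}$ is the chosen backward edge), so $v_{i^*}$ has two children $v_{i^*-1}, v_{i^*+1}$. The separating property at $v_{i^*}$ places them in two descendant subtrees on opposite numerical sides. In the subcase $v_{i^*-1} < v_{i^*} < v_{i^*+1}$, the lower descendant subtree contains the path back to $k$ so that $k < v_{i^*}$, while the upper descendant subtree contains the forward path to $j$. The crucial observation is that $L$ is connected to $j$ directly in $T$ through the outgoing edge of $j$ into $L$, and this edge does not pass through $v_{i^*}$ (since $v_{i^*} \in D_c$ is disjoint from $L \cup R \cup \{j\}$); therefore $L$ lies in the same component of $T \setminus \{v_{i^*}\}$ as $j$, namely the upper descendant subtree, forcing every element of $L$ to exceed $v_{i^*}$. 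Thus $M > v_{i^*} > k$, contradicting $k > M$. The mirror subcase $v_{i^*-1} > v_{i^*} > v_{i^*+1}$ routes $R$ through $j$ in the same way to yield $m < v_{i^*} < k$, contradicting $k < m$. The remaining case $i^* = 0$ is handled symmetrically: if $p = 1$ the backward edge $j \to k$ directly makes $k$ a parent of $j$, placing $k \in L \cup R$; otherwise $v_1$ has two outgoing edges (to $k$ and to $v_2$), hence two parents, and the separating property at $v_1$ combined with the same routing of $L$ or $R$ through the edge out of $j$ (which bypasses $v_1$) gives the analogous numerical contradictions.

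The main obstacle is identifying the right geometric content of the separating property: at the \enquote{pivot} node $v_{i^*}$ (or $v_1$) the local separation of its two children (or parents) must be upgraded, by routing the ancestor subtree $L$ of $j$ through the edge leaving $j$ into $L$ which bypasses the pivot, to a global statement placing the \emph{entirety} of $L$ on one side of the pivot. This upgrade is what closes the argument, collapsing the hypothesis $k \in (M,m)$.
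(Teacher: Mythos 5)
Your proof follows essentially the same route as the paper's: take the unique (a priori undirected) path $\pi$ from $k$ to $j$, note that $M<k<m$ forces $k\notin L\cup R$ so that $\pi$ ends with an edge into $j$, locate a node of $\pi$ where the orientation breaks, and contradict the separating property there by routing all of $L$ (resp.\ $R$) into the component of $j$, since the parent edges of $j$ bypass the breaking node. Your main case $i^*\ge 1$, and the ``upgrade'' of the local separation to the global placement of $L$ or $R$, are correct and are exactly what the paper compresses into the sentence ``$\ell$ cannot separate $k$ from $\{M,m\}$''.

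There is, however, a small slip in the case $i^*=0$. Taking $i^*$ to be the \emph{smallest} backward index guarantees, when $i^*\ge 1$, that both path edges at $v_{i^*}$ are incoming; but when $i^*=0$ it does not guarantee that $v_1$ has two outgoing edges, because the edge between $v_1$ and $v_2$ may also be backward. For instance $\pi$ could begin $k\leftarrow v_1\leftarrow v_2\rightarrow v_3\rightarrow\cdots\rightarrow j$: here $v_1$ is a plain pass-through node (one outgoing, one incoming path edge) and the branching actually occurs at $v_2$, which your case analysis never examines. The repair is immediate: since edge $0$ is backward while the last edge $c\to j$ is forward, there is an index $i$ with edge $i$ backward and edge $i+1$ forward, and at $v_{i+1}$ both path edges are outgoing, so $v_{i+1}$ has two parents; your separation-plus-routing argument then applies verbatim at $v_{i+1}$ in place of $v_1$. (Equivalently, as the paper does, observe directly that any non-directed path contains a node with either two incoming or two outgoing path edges, and argue only at such a node.) With this adjustment the proof is complete.
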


\begin{proof}
Assume that~$j$ has two ancestors, the proof for two descendants being symmetric.
As~$T$ is a tree, there is a (\apriori undirected) path~$\pi$ between~$k$ and~$j$.
As~$M < k < m$, we obtain that~$k$ is not an ancestor of~$j$, so that the path~$\pi$ ends with an incoming edge at~$j$.
If~$\pi$ is not a directed path, then there exists some node~$\ell$ along~$\pi$ such that~$\pi$ has either two outgoing or two incoming edges at~$\ell$.
This contradicts the separating property, as $M < k < m$ so that~$\ell$ cannot separate~$k$ from~$\{M,m\}$.
We conclude that~$\pi$ is a directed path from~$k$ to~$j$.
\end{proof}

\cref{lem:separatingTree1,lem:separatingTree3,subsec:insertionMap} motivate the following definitions of arcs associated to a separating tree~$T$, which will be crucial in \cref{def:admissibleSeparatingTrees,prop:admissibleSeparatingTrees}.

\begin{definition}
\label{def:mandatoryArcsSeparatingTree}
The \defn{mandatory arcs} of a separating tree~$T$ are the arcs of the form~$(i, k, A, B)$, where~$i < k$ form an edge of~$T$ and~$A$ (resp.~$B$) is the set of nodes~$j \in {]i,k[}$ such that there is a directed path in~$T$ joining the node $j$ to the edge~$i-k$ (resp.~the edge~$i-k$ to the node~$j$).
\end{definition}

\begin{definition}
\label{def:forbiddenArcsSeparatingTree}
The \defn{forbidden up} (resp.~\defn{down}) \defn{arcs} of a separating tree~$T$ are the up arcs~$u(M, m)$ (resp.~down arcs~$d(M, m)$), with~$M \eqdef \max(L)$ and~$m \eqdef \min(R)$ where~$L$ and~$R$ are two ancestor (resp.~descendant) subtrees of a node~$j$ of~$T$ with~$L < R$.
\end{definition}

\begin{example}
For instance, \cref{fig:mandatoryForbiddenArcsSeparatingTrees} shows the mandatory (blue, top) and forbidden (red, bottom) arcs of the four separating trees of \cref{fig:separatingTrees}.

\begin{figure}
	\capstart
	\centerline{\includegraphics[scale=.85]{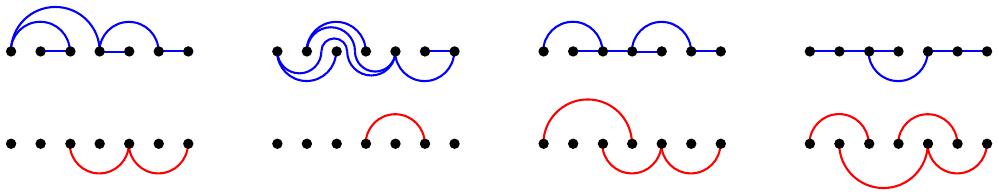}}
	\caption{The mandatory arcs (blue, top) and the forbidden arcs (red, bottom) of the four separating trees of \cref{fig:separatingTrees}.}
	\label{fig:mandatoryForbiddenArcsSeparatingTrees}
\end{figure}
\end{example}

\begin{remark}
\label{rem:separatingTreeFallHorizontalAxis}
If we draw $T$ such that its nodes are ordered increasingly from left to right, and its edges are oriented from bottom to top, then the mandatory arcs of~$T$ are obtained by letting $T$ fall down to the horizontal axis, allowing the edges to curve, but not to cross each other nor to pass through any node.
\end{remark}

Finally, we give a planar characterization of separating trees which will be useful in the proof of \cref{prop:simpleImpliesSeparatingTrees}.

\begin{lemma}
\label{lem:separatingTree2}
Let~$G$ be a directed graph on~$[n]$ drawn such that the nodes are ordered increasingly from left to right, and the edges are oriented from bottom to top.
Then~$G$ is a separating tree~if~and~only~if
\begin{enumerate}[(i)]
\item each node~$j$ of~$G$ has at most one outgoing (resp.~incoming) neighbor~$i < j$ and at most one outgoing (resp.~incoming) neighbor~$k > j$,
\item if~$i < j < k$ and~$j$ has two parents (resp.~children) and~$i-k$ is an edge of~$G$, then~$i-k$ passes below (resp.~above)~$j$.
\end{enumerate}
In other words, when we draw a wall above each node with two parents and below each node with two children, the edges of the separating tree never cross the walls.
\end{lemma}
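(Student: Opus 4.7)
Plan:

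We prove the two directions separately.

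For the forward direction, assume $G$ is a separating tree~$T$, drawn as the arc diagram of its mandatory arcs (cf.~\cref{def:mandatoryArcsSeparatingTree} and \cref{rem:separatingTreeFallHorizontalAxis}). Condition~(i) follows directly from \cref{def:separatingTree}: the separating property at~$j$ forces any two parents or two children of~$j$ to lie on opposite sides of~$j$ in the integer order. For~(ii), suppose~$j$ has two parents (the case of two children is symmetric) and let $i$-$k$ be an edge of~$T$ with $i<j<k$. A tree-cycle argument forces~$i$ and~$k$ to lie in a common descendant subtree~$D$ of~$j$: the ancestor subtrees $L<j<R$ and any descendant subtrees are distinct components of $T\setminus\{j\}$, with $L$ entirely $<j$ and $R$ entirely $>j$; moreover if~$j$ had two children, the two descendant subtrees would also be separated, which is incompatible with~$D$ containing both $i<j$ and $k>j$. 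In particular, $D$ contains no directed ancestor of~$j$, since all directed ancestors of~$j$ lie in $L\cup R$. Applying \cref{lem:separatingTree1} to the triple $i<j<k$ and the edge $i$-$k$ produces a directed path from~$i$ to~$j$, from~$j$ to~$k$, from~$j$ to~$i$, or from~$k$ to~$j$ depending on the edge's orientation; the alternatives $j\to k$ and $j\to i$ would make~$k$ or~$i$ a directed ancestor of~$j$, contradicting $i,k\in D$. The remaining alternatives yield a directed path from~$i$ or~$k$ to~$j$, so $j\in B$ in the mandatory arc $(i,k,A,B)$ and the arc passes below~$j$.

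For the backward direction, assume (i) and~(ii). We first show $G$ is a tree. For acyclicity, suppose~$G$ contains an undirected cycle~$C$, and let~$j$ be a vertex of~$C$ of maximal height in the drawing; both cycle-edges at~$j$ are incoming, and by~(i) they connect~$j$ to children $c_L<j<c_R$, so~$j$ has two children and hence a wall below it. Traversing $C$ from $c_L$ to $c_R$ while avoiding~$j$, some edge $i$-$k$ of $C$ satisfies $i<j<k$, but condition~(ii) in the two-children case forces this edge to pass above~$j$, contradicting the maximality of $j$'s height on~$C$. Connectedness (with $n-1$ edges) then follows from~(i) together with the essentiality convention. For the separating property, if~$j$ has two parents then~(i) positions them as $p_1<j<p_2$; a hypothetical node $n>j$ in the left ancestor subtree of~$j$ would produce, along the path from~$p_1$ to~$n$ within that subtree, an edge $i$-$k$ with $i<j<k$, which~(ii) would force to pass below~$j$---contradicting its containment in the ancestor region above~$j$. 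The case of two children is handled symmetrically.

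The forward direction reduces cleanly to \cref{lem:separatingTree1}; the main obstacle is the backward direction, particularly making the cycle argument and the derivation of the separating property fully rigorous when~$G$ is not assumed \emph{a~priori} to be a tree.
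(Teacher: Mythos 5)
Your forward direction is correct and is essentially the paper's (which dispatches condition~(ii) as ``a direct application of \cref{lem:separatingTree1}''); your descendant-subtree argument is a valid way to carry that application out. The acyclicity part of the backward direction also matches the paper's proof verbatim (topmost node of the cycle has two children, the cycle must contain a crossing edge $i$--$k$ with $i<j<k$, and (ii) forces it above $j$, contradicting maximality). The genuine gap is in your verification of the separating property. You dismiss a node $k>j$ in the left ancestor subtree of $j$ by saying the resulting crossing edge $i$--$k$ with $i<j<k$ ``passes below $j$, contradicting its containment in the ancestor region above $j$.'' But nothing established so far places the left ancestor subtree \emph{above} $j$: its nodes are reached from $j$ by undirected paths that may go up and down, so their heights relative to $j$ are exactly what is unknown --- indeed, the claim that this subtree lies entirely to the left of $j$ is the statement being proved, and without it the subtree has no geometric location. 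As written, this step would fail. The paper closes it with an induction on the value of the node: since the crossing edge passes below $j$ by (ii), the path $\pi$ from $j$ to $k$ cannot be directed, hence has a node $h<j$ at which $\pi$ has two incoming arrows, so that $k$ lies in the left descendant subtree of $h$; the induction hypothesis applied to $h$ gives $k<h<j$, a contradiction. Some such induction (or an equivalent extremal argument) is the missing ingredient in your sketch.

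A smaller point: your claim that connectedness ``follows from (i) together with the essentiality convention'' is not a proof --- conditions (i) and (ii) are satisfied by disconnected graphs --- but the paper itself passes silently from acyclicity to ``tree,'' so this is an imprecision of the statement shared with the paper rather than a defect specific to your argument; in the application the graph is a Hasse diagram, which is connected for essential congruences.
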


\begin{proof}
First, if~$G$ is a separating tree, then it satisfies~(i) by definition and~(ii) by a direct application of \cref{lem:separatingTree1}.
Conversely, assume that~$G$ satisfies~(i) and~(ii).
Assume that~$G$ contains a (non oriented) cycle~$C$, and let~$j$ be the topmost node of~$C$.
Hence~$j$ has two incoming edges~$p \to j$ and~$q \to j$, and we can assume that~$p < j < q$ by Condition~(i).
Therefore, $C$ has an edge~$i-k$ with~$i < j < k$ to connect~$p$ with~$q$.
As~$j$ is the topmost node of~$C$, the edge~$i-k$ passes below~$j$, contradicting Condition~(ii).
We conclude that~$G$ is a tree.
Consider now a node~$j$ of~$G$.
By~(i), $j$ has at most two parents and at most two children.
We now prove by induction on~$j$ that if~$j$ has two parents (resp.~children), then all nodes in its left ancestor (resp.~descendant) subtree are smaller than~$j$.
Assume by contradiction that this subtree contains some~$\ell > j$, and consider the path~$\pi$ from~$j$ to~$\ell$ in~$G$.
Along this path, there is an edge~$i - k$ with~$i < j < k$.
By~(ii), this edge must pass below (resp.~above)~$j$.
Therefore, the path~$\pi$ is not a directed path from~$j$ to~$k$.
Hence, there exists a node~$h < j$ along~$\pi$ such that~$\pi$ has two incoming (resp.~outgoing) arrows at~$h$, and $\ell$ belongs to the left descendant (resp.~ancestor) subtree of~$h$.
By induction, we obtain that~$\ell < h < j$, contradicting our assumption that~$\ell > j$.
The proof for the right ancestor (resp.~descendant) is symmetric.
We conclude that~$G$ is indeed a separating tree.
\end{proof}


\subsection{Posets of simple congruences}
\label{subsec:posetsSimpleCongruences}

We now provide a short combinatorial proof of the implication~(i)~$\Longrightarrow$~(ii) of \cref{thm:simpleCongruences}.
Our proof is a very elementary approach based on the insertion map described in~\cref{subsec:insertionMap}.
In fact, we prove the following stronger statement on the $\equiv$-posets of a simple congruence~$\equiv$.

\begin{proposition}
\label{prop:simpleImpliesSeparatingTrees}
If~$\equiv$ is a simple essential congruence of the weak order, then the Hasse diagram of any $\equiv$-poset is a separating tree.
\end{proposition}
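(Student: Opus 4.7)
The plan is to verify that the Hasse diagram of $\preccurlyeq^\equiv_\sigma$ satisfies the planar characterization of separating trees given by \cref{lem:separatingTree2}. By \cref{coro:insertionMap} and \cref{prop:insertionMap}, this Hasse diagram is obtained by placing the nodes $1, \dots, n$ on a horizontal line and realizing each Hasse edge as an arc: up-right edges come from arcs of $\arcDiagramUp(\sigma)$ (smaller label below, larger label above in the poset), and up-left edges come from arcs of $\arcDiagramDown(\sigma)$. The problem thus reduces to checking conditions~(i) and~(ii) of \cref{lem:separatingTree2} for this drawing.

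Condition~(i) of \cref{lem:separatingTree2} will be immediate: each of $\arcDiagramUp(\sigma)$ and $\arcDiagramDown(\sigma)$ is a noncrossing arc diagram, so each integer is the left endpoint of at most one arc and the right endpoint of at most one arc in each of these two diagrams. Translating, a node $j$ has at most one Hasse-outgoing edge to a larger label (from an $\arcDiagramUp(\sigma)$-arc with left endpoint $j$), at most one Hasse-outgoing edge to a smaller label (from an $\arcDiagramDown(\sigma)$-arc with right endpoint $j$), and symmetrically for the two types of incoming edges at $j$, which is exactly condition~(i).

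The crucial step is condition~(ii), where the simplicity of $\equiv$ enters. The argument will proceed by contradiction: assume that $j$ has two parents in the Hasse diagram (the symmetric case of two children is handled analogously) and that some Hasse edge $i$--$k$ with $i < j < k$ has its associated arc passing above $j$. The two parents of $j$ correspond to an arc $\beta_\mathrm{r} = (j, k', A_1, B_1) \in \arcDiagramUp(\sigma)$ with $k' > j$ and an arc $\beta_\mathrm{l} = (i', j, A_2, B_2) \in \arcDiagramDown(\sigma)$ with $i' < j$, both lying in $\arcs_\equiv$. The offending edge $i$--$k$ corresponds to an arc $\alpha = (i, k, A, B) \in \arcs_\equiv$ (in either $\arcDiagramUp(\sigma)$ or $\arcDiagramDown(\sigma)$) with $j \in A$. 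Tracking the positions $p_j, p_i, p_k, p_{k'}, p_{i'}$ in the permutation table of $\sigma$, and combining the rectangle and $\prec$-maximality conditions built into the definitions of $\boxslash_\equiv(\sigma)$ and $\boxbslash_\equiv(\sigma)$ with the arc-ideal closure of $\arcs_\equiv$, I will pinpoint an explicit arc $\gamma \in \arcs_n \ssm \arcs_\equiv$ with both $A_\gamma \ne \varnothing$ and $B_\gamma \ne \varnothing$ that is subarc-minimal in $\arcs_n \ssm \arcs_\equiv$. This contradicts the simplicity of $\equiv$.

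The main obstacle is the explicit identification of $\gamma$ and the verification of its subarc-minimality. Several geometric sub-cases arise according to whether $\alpha$ lies in $\arcDiagramUp$ or $\arcDiagramDown$ and the relative order of $k$ versus $k'$ (and of $i$ versus $i'$); in each sub-case $\gamma$ is pieced together from fragments of $\alpha$, $\beta_\mathrm{r}$, and $\beta_\mathrm{l}$ around $j$, and the $\prec$-maximality of $(p_j, p_{k'}) \in \boxslash_\equiv(\sigma)$ and $(p_j, p_{i'}) \in \boxbslash_\equiv(\sigma)$ dictates where exactly the wavy reversal must occur. Subarc-minimality of $\gamma$ must then be certified by showing that every strict subarc of $\gamma$ lies in $\arcs_\equiv$, using the arc-ideal property applied to the three in-arcs $\alpha$, $\beta_\mathrm{r}$, $\beta_\mathrm{l}$ together with the same $\prec$-maximality constraints.
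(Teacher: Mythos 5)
Your reduction to \cref{lem:separatingTree2} and your treatment of condition~(i) are correct and coincide with the paper's proof (the paper phrases (i) via uniqueness of $(i,j)\in\boxslash_\equiv(\sigma)$ for fixed $j$, you phrase it via distinct endpoints in the noncrossing arc diagrams; both work). The problem is condition~(ii): this is the only place where simplicity of $\equiv$ enters, and you do not prove it --- you only announce that you ``will pinpoint'' a suitable forbidden arc $\gamma$ and that its subarc-minimality ``must then be certified''. That deferred step is the entire content of the proposition, so as written the proposal has a genuine gap exactly at its core.

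Moreover, the strategy you sketch for that step is more demanding than necessary and is unlikely to close as stated. You want to exhibit an explicit $\gamma\in\arcs_n\ssm\arcs_\equiv$ with both sides nonempty which is \emph{itself} subarc-minimal in $\arcs_n\ssm\arcs_\equiv$; certifying that requires showing that \emph{every} proper subarc of $\gamma$ lies in $\arcs_\equiv$, and the information you actually have (membership in $\arcs_\equiv$ of the three arcs $\alpha$, $\beta_{\mathrm r}$, $\beta_{\mathrm l}$, plus $\prec$-maximality) only controls a few specific subarcs, not all of them. The paper runs the implication in the opposite direction: from the two children $p<j<q$ of $j$ and the $\prec$-maximality of $(p,j)$ in $\boxslash_\equiv(\sigma)$, it produces one specific arc $\arcUp(p,r,\sigma)$ known to lie \emph{outside} $\arcs_\equiv$ (where $r$ is the leftmost point of the table in the relevant rectangle), picks an arbitrary subarc-minimal non-arc $(u,v,U,V)$ inside it, invokes simplicity to force $U=\varnothing$ or $V=\varnothing$, rules out the up case because $\arcUp(p,r,\sigma)$ passes below $j$, and is left with a forbidden down arc $d(u,v)$ with $u<j<v$ that is a subarc of both $\arcUp(i,k,\sigma)$ and $\arcDown(i,k,\sigma)$ --- so the offending edge $i$--$k$ cannot exist at all. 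To complete your argument you should adopt this direction: use simplicity to \emph{classify} a subarc-minimal non-arc extracted from an arc already known to be forbidden, rather than trying to manufacture a wiggling non-arc and prove its minimality.
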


\begin{proof}
Assume that $\equiv$ is simple, and consider the Hasse diagram~$H$ of a $\equiv$-poset~$\preccurlyeq$.
Pick a linear extension~$\sigma$ of~$\preccurlyeq$.
By \cref{coro:insertionMap}, we obtain~$H$ starting from the permutation table of~$\sigma$.
We check that~$H$ fulfills the two conditions of \cref{lem:separatingTree2}.

First, for each $j \in [n]$, there is at most one~$i \in [n]$ such that~$(i,j) \in \boxslash_\equiv(\sigma)$, so that $j$ has at most one incoming neighbor~$i < j$ in~$H$.
Similarly, each $j \in [n]$ has at most one incoming neighbor~$k > j$.
The same arguments holds for outgoing neighbors
(Note that this part holds for all congruences, not only the simple ones.)

Second, assume that~$i < j < k$ are such that~$j$ has two parents~$p < j < q$, and~$i$ and~$k$ appear after~$j$ in~$\sigma$.
Assume for instance that~$p$ appears before~$q$ in~$\sigma$ (the other case is symmetric).
Consider the rightmost point~$r$ of the table of~$\sigma$ in the rectangle~${]p, q]} \times {]\sigma^{-1}_p,\sigma^{-1}_q]}$.
Note that~${p \le r < j}$ as~$(j,q) \in \boxslash(\sigma)$, and that~$(r,q) \in \boxslash(\sigma)$.
As~$(r,q) \trianglelefteq (j, q)$ and~$(j,q) \in \boxslash_\equiv(\sigma)$ we therefore obtain that~$\arcUp(r, q, \sigma) \notin \arcs_\equiv$.
Let~$\alpha \eqdef (u, v, U, V)$ be a subarc minimal arc of~$\arcs_n \ssm \arcs_\equiv$ which is a subarc of~$\arcUp(r, q, \sigma)$.
As~$\arcDown(p, j, \sigma)$ and~$\arcUp(j, q, \sigma)$ both belong to~$\arcs_\equiv$, we have~$p \le u < j < v \le r$.
As~$\equiv$ is a simple congruence, we have either~$U = \varnothing$ or~$V = \varnothing$.
If~$U = \varnothing$, then~$V = {]u,v[} \ni j$, contradicting that~$\alpha$ is a subarc of~$\arcUp(r, q, \sigma)$.
Hence, $U = {]u,v[}$ and~$V = \varnothing$, so that all points of~$U$ appear before~$j$ in~$\sigma$, hence before~$i$ and~$k$.
This implies that~$\alpha = (u, v, {]u,v[}, \varnothing)$ is a subarc of~$\arcUp(i, k, \sigma)$ and~$\arcDown(i, k, \sigma)$.
Hence,~$i-k$ cannot be an edge of~$H$.
\end{proof}

We can actually characterize the $\equiv$-posets of a simple congruence~$\equiv$.
Recall that we have defined the mandatory and forbidden arcs of a separating tree in \cref{def:mandatoryArcsSeparatingTree,def:forbiddenArcsSeparatingTree}.

\begin{definition}
\label{def:admissibleSeparatingTrees}
Fix an essential lattice congruence~$\equiv$ of the weak order.
A separating tree~$T$ is \defn{$\equiv$-admissible} if~$\arcs_\equiv$ contains all mandatory arcs of~$T$ and none of the forbidden arcs of~$T$.
\end{definition}

\begin{example}
For instance, the separating trees which are admissible for the sylvester congruence (resp.~$\decoration$-permutree congruence) are precisely the binary trees (resp.~$\decoration$-permutrees) on~$[n]$ to which we have deleted all pending leaves to keep only the~$n$ internal nodes and~$n-1$ internal edges.
\end{example}

\begin{lemma}
\label{lem:admissibleSeparatingTreesWithCommonLinearExtensions}
If~$T$ and~$T'$ are two $\equiv$-admissible separating trees with a common linear extension, then~${T = T'}$.
\end{lemma}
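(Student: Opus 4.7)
The plan is to show that any $\equiv$-admissible separating tree with linear extension $\sigma$ coincides with the Hasse diagram of $\preccurlyeq_\sigma^\equiv$, so that applying this to both $T$ and $T'$ gives $T = T'$. By \cref{prop:simpleImpliesSeparatingTrees} this Hasse diagram is itself a separating tree on $[n]$ and thus has $n-1$ edges, as does $T$; hence it suffices to inject the edges of $T$ into the cover relations of $\preccurlyeq_\sigma^\equiv$, which by \cref{prop:insertionMap,coro:insertionMap} are indexed by $\boxslash_\equiv(\sigma) \cup \boxbslash_\equiv(\sigma)$.

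Given an edge $u - v$ of $T$ with $u <_T v$, set $p \eqdef \sigma^{-1}(u)$ and $q \eqdef \sigma^{-1}(v)$; I treat the case $u < v$ (the case $u > v$ being the analogous $\boxbslash$ statement). Applying \cref{lem:separatingTree1} to the edge $u \to v$, each $w \in {]u,v[}$ lies below $u$ or above $v$ in $T$, hence sits at position $< p$ or $> q$ in $\sigma$. This simultaneously yields $(p,q) \in \boxslash(\sigma)$ and identifies $\arcUp(p,q,\sigma)$ with the mandatory arc of $u - v$, which belongs to $\arcs_\equiv$ by $\equiv$-admissibility.

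The heart of the argument is the $\prec$-maximality of $(p,q)$. Suppose $(p,q) \prec (p',q')$ strictly with $\arcUp(p',q',\sigma) \in \arcs_\equiv$, set $u' \eqdef \sigma_{p'}$ and $v' \eqdef \sigma_{q'}$, so that $u' \le u < v \le v'$, and assume without loss of generality $u' < u$ (the case $u' = u$, $v' > v$ being symmetric, using $v$'s descendant structure in place of $u$'s ancestor structure). Since $\sigma$ is a linear extension, $u' \not\le_T u$, so $u'$ lies in some ancestor subtree of $u$. If $u$ has two parents with subtrees $L < u < R$ and $v \in R$, the separating property places $u' \in L$, so $u' \le M \eqdef \max L$ and $m \eqdef \min R \le v \le v'$; \cref{lem:separatingTree3} shows the interior ${]M,m[}$ is entirely below $u$ in $T$, hence before position $p$ in $\sigma$, so the forbidden up arc $(M, m, {]M,m[}, \varnothing)$ at $u$ is a subarc of $\arcUp(p',q',\sigma)$, which would force it into $\arcs_\equiv$ by the ideal property and contradict admissibility. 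If $u$ has $v$ as its only parent, the undirected path from $u$ to $u'$ in $T$ starts with the edge $u - v$ and cannot remain monotone ascending: were it so, $v$ would sit at an intermediate position $q$ with $p < q < p'$, contradicting $p' < q' \le q$. Thus the path turns back at a first peak $w$, whose two children on the path lie by separation in opposite-side subtrees containing $u$ and $u'$ respectively, and the forbidden down arc at $w$, whose interior lies above $v$ in $T$ by \cref{lem:separatingTree3} (since $w \ge_T v$ via the monotone ascending portion of the path from $v$ to $w$), is then a subarc of $\arcUp(p',q',\sigma)$, giving the same contradiction.

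Combining these steps, the injection from edges of $T$ into $\boxslash_\equiv(\sigma) \cup \boxbslash_\equiv(\sigma)$ is a bijection by cardinality, so $T$ equals the Hasse diagram of $\preccurlyeq_\sigma^\equiv$; likewise for $T'$, proving $T = T'$. The main obstacle is the maximality step, whose key insight is the short position-counting argument ruling out a monotone ascending path from $u$ to $u'$ and forcing the presence of a peak on which a forbidden down arc can be built.
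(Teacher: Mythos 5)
Your overall strategy --- showing that a $\equiv$-admissible separating tree coincides with the Hasse diagram of $\preccurlyeq_\sigma^\equiv$ for any of its linear extensions $\sigma$, and deducing the lemma by applying this to both $T$ and $T'$ --- is a legitimate route, genuinely different from the paper's (which takes an edge of $T$ absent from the transitive closure of $T'$, follows the path between its endpoints in $T'$ to its last turning point, and nests the forbidden arc of $T'$ found there under the mandatory arc of $T$). Your counting step and the verification that each edge of $T$ gives an element of $\boxslash(\sigma)$ or $\boxbslash(\sigma)$ whose arc is the corresponding mandatory arc are correct. The gap is in the $\prec$-maximality step: the assertion ``$u' \not\le_T u$, so $u'$ lies in some ancestor subtree of $u$'' is false. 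In a separating tree a node can be incomparable to $u$ yet lie in a \emph{descendant} subtree of $u$, reached through a child of $u$ that has a second parent. Concretely, take $T$ on $[4]$ with edges $2\to 1$, $2\to 3$, $3\to 4$, the linear extension $\sigma = 2314$, and the edge $u\to v = 3\to 4$, so $(p,q)=(2,4)$. Then $(p',q')=(3,4)$ satisfies $(2,4)\prec(3,4)$ with $u'=\sigma_3=1<3=u$, but $1$ lies in the descendant subtree of $3$ attached to its child $2$; moreover $3$ has a single parent and the path from $3$ to $1$ is $3-2-1$, which does not start with the edge $3-4$. Neither of your two cases applies, so your argument does not exclude $\arcUp(3,4,\sigma)\in\arcs_\equiv$ here (it is in fact excluded because $\arcUp(3,4,\sigma)=u(1,4)$ contains the forbidden arc $u(1,3)$ of $T$ as a subarc).

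The missing case can be repaired by a symmetric argument: when the path from $u$ to $u'$ leaves $u$ through a child, follow it to its first turning point $c$, a node with two outgoing path-edges, hence two ancestor subtrees $L < R$ with $u'\in L$ and $u\in R$, so that $u'\le \max(L) < \min(R)\le u$. By \cref{lem:separatingTree3} every $k\in{]\max(L),\min(R)[}$ admits a directed path to $c$, hence to $u$, so it occupies a position $<p\le p'$ and lies in the first block of $\arcUp(p',q',\sigma)$; the forbidden \emph{up} arc of $T$ at $c$ is therefore a subarc of $\arcUp(p',q',\sigma)$, contradicting admissibility as before. Until this case (and its mirror in the situation $v'>v$) is included, the proof is incomplete.
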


\begin{proof}
\enlargethispage{.3cm}
Assume that~$T$ and~$T'$ are two distinct $\equiv$-admissible separating trees with a common linear extension. 
By symmetry, we can assume that there are~$1 \le p < q \le n$ such that the edge from~$p$ to~$q$ is in~$T$ but not in the transitive closure of~$T'$.
As $T$ and~$T'$ have a common linear extension, this implies that~$p$ and~$q$ are incomparable in~$T'$.
As~$T'$ is a tree, there is an undirected path~$\pi$ from~$p$ to~$q$ in~$T'$.
Let~$j$ be the last node along~$\pi$ with either two outgoing edges, or two incoming edges.
By symmetry, we can assume that~$\pi$ has two outgoing edges at~$j$.
Let~$M$ (resp.~$m$) denotes the maximum (resp.~minimum) of the left (resp.~right) ancestor subtree of~$j$ in~$T'$.
Note that~$p \le M < j < m \le q$ since~$T'$ is separating.
For any~$k \in {]M,m[}$, \cref{lem:separatingTree3} ensures the existence of a directed path from~$k$ to~$j$ in~$T'$, hence from~$k$ to~$q$.
Consider now the edge between~$p$ and~$q$ in~$T$, and let~$(p, q, A, B)$ denote the mandatory arc of~$T$ defined in \cref{def:mandatoryArcsSeparatingTree}.
As~$T$ and~$T'$ are both $\equiv$-admissible, we obtain that~$\arcs_\equiv$ contains the mandatory arc~$(p, q, A, B)$ of~$T$ but not the forbidden arc~$(M, m, {]M,m[}, \varnothing)$ of~$T'$.
As~$p \le M < m \le q$, this implies that there is~$k \in {]M,m[} \ssm A$.
By \cref{lem:separatingTree1,def:mandatoryArcsSeparatingTree}, there is a directed path from~$q$ to~$k$ in~$T$.
We conclude that there is a path from~$q$ to~$k$ in~$T$, and from~$k$ to~$q$ in~$T'$, so that~$T$ and~$T'$ cannot have a common linear extension, contradicting our original assumption.
\end{proof}

\begin{proposition}
\label{prop:admissibleSeparatingTrees}
If~$\equiv$ is a simple essential congruence of the weak order, then the Hasse diagrams of the $\equiv$-posets are precisely the $\equiv$-admissible separating trees.
\end{proposition}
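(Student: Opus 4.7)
My plan is to prove the two implications separately, with the backward direction being essentially a corollary of the forward direction and \cref{lem:admissibleSeparatingTreesWithCommonLinearExtensions}. For the backward direction, I will take a $\equiv$-admissible separating tree $T$ and any linear extension $\sigma$ of $T$, and set $T'$ to be the Hasse diagram of $\preccurlyeq_\sigma^\equiv$. By \cref{prop:simpleImpliesSeparatingTrees}, $T'$ is a separating tree, and by the forward direction (proved below), it is $\equiv$-admissible. As $\sigma$ linearly extends both $T$ and $T'$, \cref{lem:admissibleSeparatingTreesWithCommonLinearExtensions} will force $T=T'$, exhibiting $T$ as the Hasse of a $\equiv$-poset.

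For the forward direction, I start with a Hasse diagram $T$ of some $\preccurlyeq_\sigma^\equiv$. By \cref{prop:simpleImpliesSeparatingTrees} it is a separating tree, and it remains to check the two conditions of $\equiv$-admissibility. The mandatory-arc part is routine: each edge $i-k$ of $T$ encodes a cover relation of $\preccurlyeq_\sigma^\equiv$ coming from a pair in $\boxslash_\equiv(\sigma)\cup\boxbslash_\equiv(\sigma)$, so the associated $\arcUp$ (or $\arcDown$) belongs to $\arcs_\equiv$. Invoking \cref{lem:separatingTree1}, the interior values $v \in {]i,k[}$ lying to the left of $i$ in $\sigma$ are exactly the descendants of $i$ in $T$ (and symmetrically on the other side), which shows that this arc coincides with the mandatory arc of \cref{def:mandatoryArcsSeparatingTree}.

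The main obstacle will be to show that a forbidden up arc $u(M,m)$, associated to a two-parent node $j$ with left/right ancestor subtrees $L$ and $R$, cannot lie in $\arcs_\equiv$. Observing that descendants and ancestors of $j$ in $T$ are disjoint, and that $M$ and $m$ are incomparable in the $T$-poset (lying in distinct ancestor subtrees of $j$), I will choose a linear extension $\sigma$ of $T$ enumerating first the descendants of $j$, then $j$ itself, then a linear extension of $L$, then a linear extension of $R$, so that $\sigma^{-1}_j < \sigma^{-1}_M < \sigma^{-1}_m$. By \cref{lem:separatingTree3}, every $v \in {]M,m[}$ is a descendant of $j$, hence $\sigma^{-1}_v \leq \sigma^{-1}_j < \sigma^{-1}_M$; this puts no point of $\sigma$'s table in the rectangle defining $(\sigma^{-1}_M,\sigma^{-1}_m) \in \boxslash(\sigma)$ and identifies $\arcUp(\sigma^{-1}_M,\sigma^{-1}_m,\sigma)$ with $u(M,m)$. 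Assuming $u(M,m) \in \arcs_\equiv$, some $\prec$-maximal pair $(c,d) \succeq (\sigma^{-1}_M,\sigma^{-1}_m)$ will lie in $\boxslash_\equiv(\sigma)$, contributing a cover $\sigma_c \preccurlyeq \sigma_d$ in $\preccurlyeq_\sigma^\equiv$ with $\sigma_c \leq M < j < m \leq \sigma_d$ and $d \geq \sigma^{-1}_m > \sigma^{-1}_j$; for $T$ to be the Hasse this cover must correspond to an edge of $T$, but then \cref{lem:separatingTree2}(ii) applied at the two-parent node $j$ will force this edge to pass below $j$, placing $j$ among the ancestors of $\sigma_d$ in $T$ and hence $d < \sigma^{-1}_j$, contradicting the previous inequality. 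Forbidden down arcs will be handled by the symmetric argument using $\boxbslash$.
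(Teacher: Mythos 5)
Your proof is correct and follows essentially the same architecture as the paper's: the forward direction goes through the insertion map of \cref{subsec:insertionMap} (mandatory arcs via \cref{lem:separatingTree1}, forbidden arcs via a tailored linear extension of~$T$), and the backward direction combines the forward one with \cref{prop:simpleImpliesSeparatingTrees} and \cref{lem:admissibleSeparatingTreesWithCommonLinearExtensions} exactly as the paper does. The one place you diverge is the forbidden-arc step: the paper chooses~$\sigma$ with~$M$ \emph{immediately} before~$m$, so that $(\sigma^{-1}_M,\sigma^{-1}_m)$ is automatically $\prec$-maximal and $u(M,m)\in\arcs_\equiv$ would directly force the edge $M\to m$ in~$T$, contradicting the incomparability of~$M$ and~$m$; you instead pass to a $\prec$-maximal pair $(c,d)$ dominating $(\sigma^{-1}_M,\sigma^{-1}_m)$ and invoke \cref{lem:separatingTree2}(ii), which also works but is longer. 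Two small slips to repair: the relation $(c,d)\succeq(\sigma^{-1}_M,\sigma^{-1}_m)$ yields $\sigma^{-1}_M\le c<d\le\sigma^{-1}_m$, so your inequality $d\ge\sigma^{-1}_m$ is backwards --- the contradiction survives because $d>c\ge\sigma^{-1}_M>\sigma^{-1}_j$ while the edge passing below~$j$ forces $\sigma_c$ or $\sigma_d$ to lie below~$j$, hence at a position $<\sigma^{-1}_j$; and your explicit enumeration ``descendants of~$j$, then~$j$, then~$L$, then~$R$'' omits the nodes of the descendant subtrees of~$j$ that are not actually below~$j$ in the order (e.g.\ a second parent of a child of~$j$), though these can simply be appended at the end without affecting the inequalities you use.
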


\begin{proof}
Consider first a $\equiv$-poset.
By \cref{prop:simpleImpliesSeparatingTrees}, its Hasse diagram is a separating tree~$T$, and we just need to prove that~$T$ is $\equiv$-admissible.
First, $\arcs_\equiv$ clearly contains the mandatory arcs of~$T$ by \cref{subsec:insertionMap}.
Consider now a forbidden up arc~$u(M,m)$ of~$T$.
Since~$M$ and~$m$ are incomparable in~$T$, there exists a linear extension~$\sigma$ of~$T$ in which~$M$ appears just before~$m$.
Hence,~$(M,m)$ is in~$\boxslash_\sigma$ and is minimal for~$\trianglelefteq$.
Moreover, \cref{lem:separatingTree3} implies that~${\arcUp(M, m, \sigma) = u(M,m)}$.
As~$(M,m)$ is not an edge of~$T$, we conclude that~${\arcUp(M, m, \sigma) = u(M,m) \notin \arcs_\equiv}$.
The proof is symmetric for forbidden down arcs.
We conclude that~$T$ is~$\equiv$-admissible.

Assume now that~$T$ is a $\equiv$-admissible separating tree, let~$\sigma$ be any linear extension of~$T$, and let~$T'$ be the tree obtained by insertion of $\sigma$ according to \cref{subsec:insertionMap}.
By the previous paragraph, we obtain that~$T'$ is a $\equiv$-admissible separating tree.
As~$\sigma$ is a linear extension of both~$T$ and~$T'$, we obtain by \cref{lem:admissibleSeparatingTreesWithCommonLinearExtensions} that~$T = T'$, and we conclude that~$T$ is the Hasse diagram of a $\equiv$-poset.
\end{proof}

\begin{corollary}
The $\equiv$-admissible separating trees are in bijection with the maximal cones of the quotient fan~$\quotientFan$ and with the vertices of the quotientope~$\Quotientope$.
\end{corollary}

\enlargethispage{.3cm}
\cref{prop:admissibleSeparatingTrees} characterizes the separating trees~$T$ which are (Hasse diagrams of) $\equiv$-posets for a given essential lattice congruence~$\equiv$.
Conversely, we can also describe the essential lattice congruences~$\equiv$ for which a given separating tree~$T$ is (the Hasse diagram of) a $\equiv$-poset.

\begin{proposition}
\label{prop:whichCongruences}
The simple essential lattice congruences~$\equiv$ for which a given separating tree~$T$ is (the Hasse diagram of) a $\equiv$-poset form an interval~$[\equiv_\circ, \equiv_\bullet]$ in the simple congruence lattice, where
\begin{itemize}
\item $\equiv_\circ$ is the congruence where~$\arcs_n \ssm \arcs_{\equiv_\circ}$ is the upper ideal of the arc poset generated by all up and down arcs which are not subarcs of any mandatory arc of~$T$,
\item $\equiv_\bullet$ is the congruence where~$\arcs_n \ssm \arcs_{\equiv_\bullet}$ is the upper ideal of the arc poset generated by the forbidden arcs of~$T$.
\end{itemize}
\end{proposition}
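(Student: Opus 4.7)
The plan is to reduce, via \cref{prop:admissibleSeparatingTrees}, the condition ``$T$ is (the Hasse diagram of) a $\equiv$-poset'' to the conjunction of (b) $\arcs_\equiv$ contains every mandatory arc of $T$ and (c) $\arcs_\equiv$ contains no forbidden arc of $T$. These two conditions are monotone in opposite directions under refinement — (b) is preserved by passing to finer congruences (larger $\arcs_\equiv$), and (c) by passing to coarser ones (smaller $\arcs_\equiv$) — so the set of $T$-admissible congruences is already an interval in the full congruence lattice. The task then reduces to showing that the coarsest and finest such congruences are simple and coincide with the $\equiv_\circ$ and $\equiv_\bullet$ described in the statement.

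The main obstacle, on which both endpoints depend, is the combinatorial fact that \emph{no forbidden arc of $T$ is a subarc of any mandatory arc of $T$}. To prove this, I would take a forbidden up arc $u(M, m)$ arising from a node $j$ with ancestor subtrees $L$ and $R$, $M = \max L$ and $m = \min R$, and suppose it were a subarc of a mandatory arc $(p, q, A, B)$ corresponding to an edge $p-q$ of $T$. Then $p \le M < j < m \le q$ and $j \in {]M, m[} \subseteq A$, so $T$ contains a directed path $\pi$ from $j$ to an endpoint of $p-q$. Since $j$'s outgoing edges lead into~$L$ and~$R$, and ancestor subtrees are closed under going up, $\pi$ stays entirely in~$L$ or in~$R$; combined with $p \le M$ and $q \ge m$, the only possibilities are $p \in L$ or $q \in R$. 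Treating the first (the second is symmetric), the vertex $q$ must lie in one of the four components of $T \ssm \{j\}$ — the ancestor subtrees $L, R$ or the descendant subtrees $D_1, D_2$ of $j$ — and in each case combining the edge $p-q$ with the undirected path through $j$ yields a cycle in $T$, contradicting that $T$ is a tree. Forbidden down arcs are treated symmetrically.

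With this lemma in hand, I would then verify that $\equiv_\bullet$ and $\equiv_\circ$ are simple essential congruences satisfying (b) and (c). For $\equiv_\bullet$: the non-arcs form the upper ideal generated by the forbidden arcs (which are up or down), so subarc-minimal non-arcs are up or down, giving simplicity; (c) is immediate; and (b) holds because by the lemma no mandatory arc is a superarc of any forbidden arc. For $\equiv_\circ$: the generators are by construction up or down arcs, giving simplicity; a mandatory arc~$\mu$ in the upper ideal would dominate an up or down arc~$\beta$ that is not a subarc of any mandatory arc, contradicting the fact that $\beta \le \mu$ already makes $\beta$ such a subarc; and the lemma guarantees each forbidden arc is itself a generator, so (c) holds. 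Essentiality of both extremes follows from the observation that each basic arc $(i, i+1, \varnothing, \varnothing)$ is a subarc of any mandatory arc of~$T$ spanning positions $i$ and $i+1$, and at least one such edge of $T$ exists by connectedness of $T$.

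Finally, I would establish the interval property by direct containment arguments. Let $\equiv$ be any simple essential congruence satisfying (b) and~(c). The upper ideal $\arcs_n \ssm \arcs_\equiv$ contains every forbidden arc by (c), hence the upper ideal they generate, which is exactly $\arcs_n \ssm \arcs_{\equiv_\bullet}$, so $\equiv \le \equiv_\bullet$. Conversely, any non-arc $\alpha$ of $\equiv$ dominates (by simplicity of $\equiv$) an up or down subarc-minimal non-arc $\beta$ of $\equiv$; if $\beta$ were a subarc of some mandatory arc $\mu$, then (b) together with the lower-ideal property of $\arcs_\equiv$ would force $\beta \in \arcs_\equiv$, a contradiction. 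Hence $\beta$ is a generator of $\arcs_n \ssm \arcs_{\equiv_\circ}$, and so $\alpha \in \arcs_n \ssm \arcs_{\equiv_\circ}$, giving $\equiv_\circ \le \equiv$. The reverse inclusion is automatic by monotonicity: any simple $\equiv$ with $\equiv_\circ \le \equiv \le \equiv_\bullet$ inherits (b) from $\equiv_\circ$ (since $\arcs_\equiv \supseteq \arcs_{\equiv_\circ}$) and (c) from $\equiv_\bullet$ (since $\arcs_n \ssm \arcs_\equiv \supseteq \arcs_n \ssm \arcs_{\equiv_\bullet}$).
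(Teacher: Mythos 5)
Your proposal is correct and takes essentially the same approach as the paper: the paper's entire proof is the one-line remark that the statement ``immediately follows'' from \cref{def:admissibleSeparatingTrees,prop:admissibleSeparatingTrees}, and your argument is exactly that reduction carried out in full, including the key fact the paper leaves implicit, namely that no forbidden arc of~$T$ is a subarc of any mandatory arc of~$T$. (One cosmetic point: in your cycle argument the case $q \in L$ does not actually route the tree path through~$j$, but that case is vacuous anyway since $q \ge m > \max(L)$.)
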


\begin{proof}
Immediately follows from \cref{def:admissibleSeparatingTrees,prop:admissibleSeparatingTrees}.
\end{proof}

\begin{example}
For instance, \cref{fig:minMaxCongruencesSeparatingTrees} shows the subarc minimal uncontracted arcs of the minimal (blue, top) and maximal (red, bottom) simple congruences for the four separating trees of \cref{fig:separatingTrees}.

\begin{figure}
	\capstart
	\centerline{\includegraphics[scale=.85]{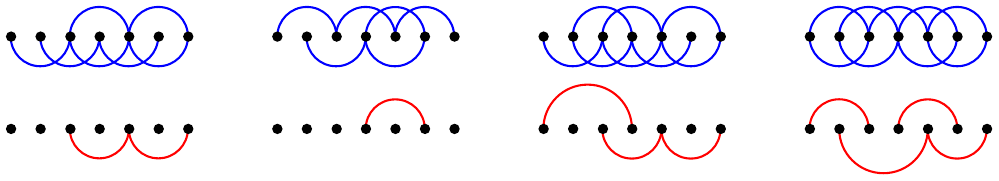}}
	\caption{The subarc minimal uncontracted arcs of the minimal (blue, top) and maximal (red, bottom) simple congruences for the four separating trees of \cref{fig:separatingTrees} (whose mandatory and forbidden arcs are shown in \cref{fig:mandatoryForbiddenArcsSeparatingTrees}).}
	\label{fig:minMaxCongruencesSeparatingTrees}
\end{figure}
\end{example}

\begin{remark}
\label{rem:separatingTreesArePermutrees}
As a follow-up of \cref{rem:separatingTreesVSPermutrees}, we obtain that any separating tree appears in at least one permutree congruence.
Indeed, the minimal simple congruence~$\equiv_\circ$ in \cref{prop:whichCongruences} is always a permutree congruence.
Indeed, consider an up arc~$u(i, j)$ with~$1 \le i < j \le n$ with~$j-i > 2$, and assume that both~$u(i,j-1)$ and~$u(i+1,j)$ are in~$\arcs_{\equiv_\circ}$.
By definition, $u(i,j-1)$ and~$u(i+1,j)$ are subarcs of some mandatory arcs~$\alpha$ and~$\beta$ of~$T$.
Since~$i-j > 2$ and~$\alpha$ and~$\beta$ are not crossing (see \cref{rem:separatingTreeFallHorizontalAxis}), either~$\alpha$ passes above~$j$ or~$\beta$ passes above~$i$.
Hence, $u(i,j)$ is a subarc of either~$\alpha$ or~$\beta$, thus it is in~$\arcs_{\equiv_\circ}$.
We conclude that the subarc minimal arcs in~$\arcs \ssm \arcs_{\equiv_\circ}$ have length at least~$2$, so that~$\equiv_\circ$ is a permutree congruence (see \cref{rem:permutreeCongruences}).
\end{remark}


\section{Schröder separating trees and preposets of simple congruences}
\label{sec:SchroderSeparatingTreesPreposetsSimpleCongruences}

In this section, we introduce Schröder separating trees (\cref{subsec:SchroderSeparatingTrees}) and prove that they provide a combinatorial description of all faces of the simple quotientopes (\cref{subsec:preposetsSimpleCongruences}).


\subsection{Schröder separating trees}
\label{subsec:SchroderSeparatingTrees}

We now define the higher dimensional analogue of separating trees.

\begin{definition}
\label{def:ShcroderSeparatingTree}
A \defn{Schröder separating tree} is a directed tree on the parts of a set partition of~$[n]$ such that any two ancestor (resp.~descendant) subtrees of a node~$J$ are separated by an integer~$j \in J$, meaning that all integers that appear in a node of one subtree are smaller than~$j$, while all integers that appear in a node of the other subtree are larger than~$j$.
\end{definition}

\begin{example}
\label{exm:SchroderSeparatingTrees}
\cref{fig:SchroderSeparatingTrees} represents some Schröder separating trees. We use the same conventions as in \cref{exm:separatingTrees}.
\begin{figure}[h]
	\capstart
	\centerline{\includegraphics[scale=.85]{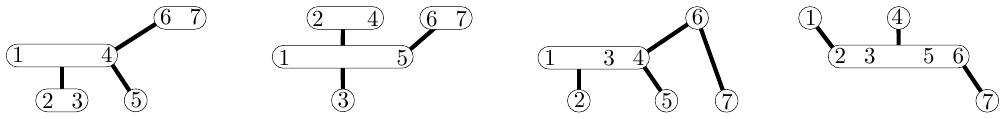}}
	\caption{Some Schröder separating trees on~$[7]$.}
	\label{fig:SchroderSeparatingTrees}
\end{figure}
\end{example}

\begin{remark}
Note that the separating trees are precisely the Schröder separating trees with $n$ nodes (or equivalently, where each node of~$S$ is a singleton).
\end{remark}

\begin{remark}
The number of Schröder separating trees on~$[n]$ with $k$ nodes is given by the following table:
\[
\begin{array}{c|cccccccccc}
k \backslash n & 1 & 2 & 3 & 4 & 5 & 6 & 7 & 8 & \dots \\
\hline
1 & 1 & 1 & 1 & 1 & 1 & 1 & 1 & 1 & \dots\\
2 & & 2 & 6 & 14 & 30 & 62 & 126 & 254 & \dots \\
3 & & & 8 & 46 & 184 & 638 & 2064 & 6438 & \dots\\
4 & & & & 42 & 388 & 2344 & 11818 & 54210 & \dots\\
5 & & & & & 264 & 3556 & 30134 & 207494 & \dots\\
6 & & & & & & 1898 & 35134 & 398040 & \dots\\ 
7 & & & & & & & 15266 & 372862 & \dots\\ 
8 & & & & & & & & 135668 & \dots \\
\vdots & & & & & & & & & \ddots \\
\hline
\Sigma & 1 & 3 & 15 & 103 & 867 & 8499 & 94543 & 1174967 & \dots
\end{array}
\]
\end{remark}

\begin{definition}
\label{def:contraction}
The \defn{contraction} of an edge~$e$ in a Schröder separating tree~$S$ is the tree obtained by contracting the edge~$e$ and labeling the resulting node with the union of the parts labelling the endpoints~of~$e$.
\end{definition}

\begin{lemma}
\label{lem:contractionSchroderSeparatingTree}
Schröder separating trees are stable by contraction.
\end{lemma}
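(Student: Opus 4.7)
My plan is to fix an edge~$e$ of a Schröder separating tree~$S$, orient it from~$J$ to~$K$ so that~$K$ is a parent of~$J$, let~$S'$ be the contracted tree with merged vertex~$N \eqdef J \cup K$, and verify that the separating property of \cref{def:ShcroderSeparatingTree} continues to hold at every vertex of~$S'$. Since~$S'$ is again a tree on a set partition of~$[n]$ by construction, the verification reduces to checking the separating property, and this splits naturally into the easy case of a vertex~$L \neq N$ and the main case of the new vertex~$N$.

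First, for any vertex~$L$ of~$S'$ distinct from~$N$, contracting~$e$ does not alter the integer support of any ancestor or descendant subtree of~$L$: the only change inside such a subtree is that two of its interior vertices (namely~$J$ and~$K$, when they occur) are relabeled into a single part. Hence the separating property at~$L$ in~$S'$ is inherited verbatim from the separating property at~$L$ in~$S$.

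The main case is the separating property at~$N$ itself. The parents of~$N$ in~$S'$ are the parents of~$J$ in~$S$ other than~$K$, together with the parents of~$K$ in~$S$; the children of~$N$ are described symmetrically. Given two ancestor subtrees of~$N$ in~$S'$ rooted at parents~$P_1, P_2$, I would distinguish three subcases: both~$P_i$ parents of~$J$ in~$S$, both~$P_i$ parents of~$K$ in~$S$, or one of each. The first two subcases follow immediately from the separating property at~$J$ (resp.~at~$K$) in~$S$, producing a separator in~$J \subseteq N$ (resp.~in~$K \subseteq N$). For the mixed subcase, I would apply the separating property at~$J$ in~$S$ to its two ancestor subtrees rooted at~$P_1$ and at~$K$, producing a separator~$j \in J$, and then conclude that the same~$j \in N$ still separates the two chosen ancestor subtrees of~$N$ in~$S'$. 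Descendant subtrees of~$N$ are handled symmetrically, with the roles of~$J$ and~$K$ swapped, the separator coming from~$K$ instead of~$J$ in the mixed subcase.

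The main obstacle is the bookkeeping in the mixed subcase: the key observation needed there is that the integer support of the subtree of~$S'$ rooted at~$P_2$ is contained in the integer support of the ancestor subtree of~$J$ in~$S$ rooted at~$K$ (since the former is the connected component of~$P_2$ in~$S \setminus \{P_2K\}$, while the latter is the connected component of~$K$ in~$S \setminus \{JK\}$, which contains~$P_2$ and everything beyond). Once this containment is carefully stated, the old separator supplied by the separating property at~$J$ in~$S$ still works at~$N$ in~$S'$, and the rest of the proof is routine case analysis.
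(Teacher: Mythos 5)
Your proof is correct and takes essentially the same route as the paper's: both reduce the claim to verifying the separating property at the merged node, and in the mixed subcase both apply the separating property at one endpoint of the contracted edge to its subtree rooted at the other endpoint, concluding via the containment of the relevant subtree of the contracted tree inside that larger subtree. Your write-up is if anything slightly more explicit about the case split (both subtrees attached to~$J$, both to~$K$, or one of each) and about which endpoint supplies the separator in the ancestor versus descendant cases.
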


\begin{proof}
Consider the contraction~$T$ of an edge~$J_\circ \to J_\bullet$ in a Schröder separating tree~$S$, and denote by~$J_{\circ\bullet} \eqdef J_\circ \sqcup J_\bullet$ the merged node of~$T$.
By definition, $T$ is a directed tree on the parts of a partition of~$[n]$.
Consider now two ancestor subtrees~$X$ and~$Y$ of a node~$J$ of~$T$ (the case of descendant subtrees is symmetric).
If~$J \ne J_{\circ\bullet}$, we have nothing to prove, so we assume that~$J = J_{\circ\bullet}$.
If~$X$ and~$Y$ were both ancestor subtrees of~$J_\circ$ (or similarly of~$J_\bullet$), there is nothing to prove.
Hence, assume that~$X$ was an ancestor subtree of~$J_\circ$ while~$Y$ was an ancestor subtree of~$J_\bullet$.
Then~$Y$ is contained in the ancestor subtree~$Z$ of~$J_\circ$ in~$S$ containing~$J_\bullet$.
As~$S$ is a Schröder separating tree, there is~$j \in J_\circ$ separating~$X$ from~$Z$.
As~$Y \subset Z$, the subtrees~$X$ and~$Y$ of~$T$ are separated by~$j \in J_{\circ\bullet}$.
We conclude that~$T$ is indeed a Schröder separating tree.
\end{proof}

\begin{lemma}
\label{lem:decontractionSchroderSeparatingTree}
The contraction minimal Schröder separating trees are precisely the separating trees.
\end{lemma}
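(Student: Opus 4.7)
The plan is to prove the two inclusions separately. The easy direction is that every separating tree is contraction minimal: since a separating tree has exactly $n$ singleton nodes, any Schröder separating tree admitting it as a proper contraction would need strictly more than $n$ nodes, contradicting the fact that its nodes form a partition of $[n]$.

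The substantive direction, and what I expect to be the main obstacle, is the converse: every Schröder separating tree $S$ with a non-singleton node $J$ is a proper contraction of some Schröder separating tree $S'$. I would handle this by explicit construction. Pick $j_\circ \eqdef \min J$, set $J_\circ \eqdef \{j_\circ\}$ and $J_\bullet \eqdef J \ssm \{j_\circ\}$, and replace $J$ in $S$ by these two new nodes joined by an edge oriented $J_\circ \to J_\bullet$ (so that $J_\bullet$ is a parent of $J_\circ$). The delicate point is how to redistribute the ancestor and descendant subtrees of $J$ between $J_\circ$ and $J_\bullet$: I would attach to $J_\circ$ the (at most one) ancestor subtree of $J$ whose range lies strictly below $j_\circ$, together with the descendant subtrees whose range lies strictly below $j_2 \eqdef \min J_\bullet$ or straddles $j_\circ$, and attach every other subtree of $J$ to $J_\bullet$.

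The verification rests on two structural observations about Schröder separation at $J$, which I would prove first as lemmas: at most one ancestor (resp.~descendant) subtree of $J$ has any element strictly below $j_\circ$, since two such subtrees of the same type would require a separator in $J$ strictly smaller than $j_\circ$, of which there is none; and analogous rigidity governs subtrees straddling $j_\circ$ or lying entirely in the gap $(j_\circ, j_2)$. With these in hand, the Schröder axiom at $J_\circ$ is immediate, as its left and right attachments are separated by the unique element $j_\circ$. At $J_\bullet$, the key point is that no two of its newly assigned subtrees were separated in $S$ using $j_\circ$ (the only candidate for such a separation was precisely the subtree routed through $J_\circ$), so each original separator already lies in $J_\bullet$; moreover, the subtree hanging from $J_\bullet$ through $J_\circ$ has range below $j_2$ and is therefore separated from the remaining siblings by $j_2$ itself. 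Finally, contracting the new edge $J_\circ \to J_\bullet$ of $S'$ manifestly returns $S$, which completes the argument.
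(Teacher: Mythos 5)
Your easy direction and your overall strategy (split off $\min J$ as a singleton $J_\circ$, make $J_\bullet$ its parent, and redistribute the subtrees of $J$) match the paper's proof, but your attachment rule for the descendant subtrees is wrong, and the flaw is in the construction itself, not merely in the verification. Concretely, take $n=4$ and let $S$ be the Schröder separating tree with the two nodes $\{1,3\}$ and $\{2,4\}$ and the single edge $\{2,4\}\to\{1,3\}$; this satisfies \cref{def:ShcroderSeparatingTree} vacuously (each node has at most one ancestor and one descendant subtree), and it is even the Hasse diagram of a $\equiv$-preposet for the trivial congruence, namely the face $x_2=x_4\le x_1=x_3$ of the braid fan. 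Splitting $J=\{1,3\}$ gives $j_\circ=1$ and $j_2=3$, and the descendant subtree $D=\{2,4\}$ neither has range strictly below $j_2=3$ nor straddles $j_\circ=1$, so your rule attaches it to $J_\bullet=\{3\}$. The node $\{3\}$ then has the two descendant subtrees $\{1\}$ and $\{2,4\}$, which cannot be separated by $3$ since $2<3<4$. The same example shows where your verification at $J_\bullet$ breaks: the subtree through $J_\circ$ does have range below $j_2$ here, but the sibling $\{2,4\}$ is not entirely above $j_2$, so $j_2$ does not separate them. In general, a descendant subtree sent to $J_\bullet$ by your rule is only guaranteed not to lie entirely below $j_2$ and not to straddle $j_\circ$; its minimum may lie in the gap $\left]j_\circ, j_2\right[$, and then no element of $J_\bullet$ can separate it from the subtree containing $j_\circ$.

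The correct dichotomy, which is the one the paper uses, is governed by the minimum of the descendant subtree rather than by its maximum or by straddling: attach $X$ to $J_\bullet$ only when $X$ lies entirely above $\min(J_\bullet)$, and to $J_\circ$ otherwise (symmetrically, an ancestor subtree goes to $J_\circ$ only when it lies entirely below $\max(J_\circ)$). With that rule the subtree hanging below $J_\bullet$ through $J_\circ$ need not have range below $j_2$ (for instance when a descendant subtree straddling $j_\circ$ is attached to $J_\circ$), but every descendant subtree kept at $J_\bullet$ is entirely above $j_2$, and one checks that the original separator in $J$ of any two subtrees kept at $J_\bullet$ already lies in $J_\bullet$. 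Two secondary remarks: your structural observation that at most one ancestor (resp.\ descendant) subtree of $J$ contains an element below $j_\circ$ is correct and is indeed the right kind of lemma to isolate; and restricting to the split $J_\circ=\{\min J\}$ is legitimate for this lemma (the paper performs an arbitrary split $J=J_\circ\sqcup J_\bullet$ with $J_\circ<J_\bullet$ because that flexibility is needed later for the $\equiv$-admissible version).
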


\begin{proof}
A separating tree is clearly a contraction minimal Schröder separating tree since all nodes are singletons.
Conversely, consider a Schröder separating tree~$T$ which is not a separating tree.
Then it has a node~$J_{\circ\bullet}$ with~$|J_{\circ\bullet}| > 1$.
Let~$J_{\circ\bullet} = J_\circ \sqcup J_\bullet$ for some arbitrary~$J_\circ \ne \varnothing \ne J_\bullet$ with~$J_\circ < J_\bullet$.
Let~$S$ be the tree obtained by splitting the node~$J_{\circ\bullet}$ of~$T$ into two nodes~$J_\circ$ and~$J_\bullet$, adding an edge~$J_\circ \to J_\bullet$, and attaching each ancestor (resp.~descendant) subtree~$X$ of~$J_{\circ\bullet}$ to~$J_\circ$ if~$X < \max(J_\circ)$ (resp.~if~$\min(J_\bullet) \not< X$) and to~$J_\bullet$ otherwise.
By definition, $S$ is a tree on the parts of a partition of~$[n]$, and $T$ is the contraction of the edge~$J_\circ \to J_\bullet$ in the tree~$S$, so we just need to check that~$S$ is separating.
Consider two ancestor subtrees~$X$ and~$Y$ of a node~$J$ of~$S$ (the case of descendant subtrees is symmetric).
If~$J \notin \{J_\circ, J_\bullet\}$, there is nothing to prove.
Assume first that~$J = J_\bullet$.
We thus obtain that $X$ and~$Y$ were ancestor subtrees of~$J_{\circ\bullet}$ in~$T$.
As~$T$ is a Schröder separating tree, there is~$j \in J_{\circ\bullet}$ separating~$X$ and~$Y$.
As~$X$ and~$Y$ were attached to~$J_\bullet$, we have~$X \not < \max(J_\circ)$ and~$Y \not< \max(J_\circ)$, from which we deduce that~$j \in J_\bullet = J$.
Assume now that~$J = J_\circ$.
If neither~$X$ nor~$Y$ contains~$J_\bullet$, then they were both ancestor subtrees of~$J_{\circ\bullet}$ in~$T$.
As~$T$ is a Schröder separating tree, there is~$j \in J_{\circ\bullet}$ separating~$X$ and~$Y$.
As~$X$ and~$Y$ were attached to~$J_\circ$, we have~$X < \max(J_\circ)$ and~$Y < \max(J_\circ)$, from which we deduce that~$j \in J_\circ = J$.
Assume now that~$X$ does not contain~$J_\bullet$ but that~$Y$ contains~$J_\bullet$.
We have~$X < \max(J_\circ)$ so that we can define~$j_\circ \eqdef \min\set{j \in J_\circ}{X < j}$.
Any ancestor subtree~$Z$ of~$J_\bullet$ in~$S$ is an ancestor subtree of~$J_{\circ\bullet}$ in~$T$, hence there is~$j_Z \in J_{\circ\bullet}$ with~$X < j_Z < Z$, which implies that~$X < j_\circ < Z$.
Any descendant subtree~$Z$ of~$J_\bullet$ in~$Y$ satisfies~$\min(J_\bullet) < Z$ by definition of~$S$, so that~$X < j_\circ < Z$.
Finally, as $X < j_\circ < J_\bullet$, we obtain that~$j_\circ \in J_\circ$ separates~$X$ and~$Y$.
We thus conclude that~$S$ is indeed a Schröder separating tree.
\end{proof}

The following statement is the analogue of \cref{lem:separatingTree1}.

\begin{lemma}
\label{lem:SchroderSeparatingTree1}
Consider three vertices~$I,J,K$ in a Schröder separating tree~$S$ such that there is~$i \in I$, $j \in J$ and~$k \in K$ with~$i < j < k$.
If~$I = K$ or~$I - K$ is an edge of~$S$, then there is a directed path in~$S$ joining either~$\{I,K\}$ to~$J$, or~$J$ to~$\{I,K\}$.
\end{lemma}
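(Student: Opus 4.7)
The plan is to mimic the proof of \cref{lem:separatingTree1}, with the key modification that the integer separating two subtrees at a branching node is now chosen inside the part labelling the node, rather than being the node itself. First I would handle the degenerate case $I = K$: one considers the unique undirected path $\pi$ in $S$ from $J$ to $I$, and shows it must be directed. If $\pi$ were not directed, there would be an internal node $L$ at which two adjacent edges of $\pi$ are both outgoing or both incoming, so that $L$ has two ancestor (resp.\ descendant) subtrees along $\pi$, one containing $J$ (hence $j$) and the other containing $I$ (hence both $i$ and $k$). The Schröder separating property would then produce some $\ell \in L$ separating these two subtrees, but $i < j < k$ precludes this.

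For the case where $I \neq K$ and $I-K$ is an edge of $S$, let $\pi'$ denote the unique undirected path in $S$ from $J$ to the edge $I-K$, that is, to whichever of $I$ and $K$ is reached first from $J$, so that $\pi'$ does not use the edge $I-K$. Without loss of generality, $\pi'$ terminates at $I$. Because the edge $I-K$ still hangs off the $I$-end of $\pi'$, the subtree of $S$ attached to any internal node $L$ of $\pi'$ on the $I$-side contains both $I$ and $K$, hence both $i$ and $k$. The same branching argument as above then yields a contradiction with the separating property if $\pi'$ is not directed: at any branching node $L$ along $\pi'$, one subtree would contain $j$ while the other would contain both $i$ and $k$, and no $\ell \in L$ could separate $\{j\}$ from $\{i,k\}$ since $i<j<k$. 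So $\pi'$ is directed, and concatenating it with the edge $I-K$ (oriented either way) produces a directed path joining $J$ to the edge $I-K$ or vice versa, as required.

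The main technical point, and the only place where the proof really departs from that of \cref{lem:separatingTree1}, is ensuring that in the second case both $i$ and $k$ end up on the same side of any hypothetical branching node of $\pi'$. This is exactly why $\pi'$ is chosen to avoid the edge $I-K$: that edge keeps $K$, and hence $k$, pinned to the $I$-side, opposite to $J$, at every internal node of $\pi'$. Once this setup is arranged, the rest is a direct transcription of the proof of \cref{lem:separatingTree1}, substituting ``separating integer $\ell$ in the part labelling $L$'' for ``separating vertex''.
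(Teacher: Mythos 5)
Your proof is correct and follows essentially the same route as the paper, which simply transplants the branching-node argument of \cref{lem:separatingTree1} to the Schröder setting; your extra care in checking that $i$ and $k$ land on the same side of any hypothetical branching node (because the edge $I-K$ keeps $K$ attached to the $I$-side) is precisely the point the paper leaves implicit. One small remark: the final concatenation with the edge $I-K$ ``oriented either way'' is unnecessary and cannot always be performed in a directed manner; the directed path $\pi'$ already joins $J$ to the edge (or the edge to $J$), which is all the statement requires.
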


\begin{proof}
The proof is identical to that of~\cref{lem:separatingTree1}.
As~$S$ is a tree, there is a (\apriori undirected) path~$\pi$ between~$\{I,K\}$ and~$J$ in~$S$.
If~$\pi$ is not directed, then there exists some node~$L$ along~$\pi$ such that~$\pi$ has either two incoming or two outgoing edges at~$L$.
This would contradict the separating property at~$L$ since~$i < j < k$.
\end{proof}

The following  definitions are the analogues of \cref{def:mandatoryArcsSeparatingTree,def:forbiddenArcsSeparatingTree}.

\begin{definition}
\label{def:mandatoryArcsSchroderSeparatingTree}
The \defn{mandatory arcs} of a Schröder separating tree~$S$ are the arcs of the \linebreak form~$(i, k, A, B)$, where~$i < k$ and~$A \sqcup B = {]i,k[}$ are such that
\begin{itemize}
\item the nodes~$I$ and~$K$ of~$S$ with~$i \in I$ and~$k \in K$ either coincide or form an edge in~$S$, and moreover~${]i,k[} \cap (I \cup K) = \varnothing$, 
\item $A$ (resp.~$B$) is the set of integers~$j \in {]i,k[}$ contained in a node~$J$ of~$S$ such that there is a directed path in~$S$ joining~$J$ to~$\{I,K\}$ (resp.~$\{I,K\}$ to~$J$).
\end{itemize}
\end{definition}

\begin{definition}
\label{def:forbiddenArcsSchroderSeparatingTree}
The \defn{forbidden up} (resp.~\defn{down}) \defn{arcs} of a Schröder separating tree~$S$ are the up arcs~$u(M, m)$ (resp.~down arcs~$d(M, m)$), with~$M \eqdef \max(L)$ and~$m \eqdef \min(R)$ where~$L$ and~$R$ are two ancestor (resp.~descendant) subtrees of a node~$J$ of~$S$ with~$L < R$.
\end{definition}

\begin{remark}
Note that the number of mandatory arcs of a Schröder separating tree on~$[n]$ is at least~$n-1$ but can be larger.
\end{remark}

\begin{example}
For instance, \cref{fig:mandatoryForbiddenArcsSchroderSeparatingTrees} shows the mandatory (blue, top) and forbidden (red, bottom) arcs of the four Schröder separating trees of \cref{fig:SchroderSeparatingTrees}.
\begin{figure}
	\capstart
	\centerline{\includegraphics[scale=.85]{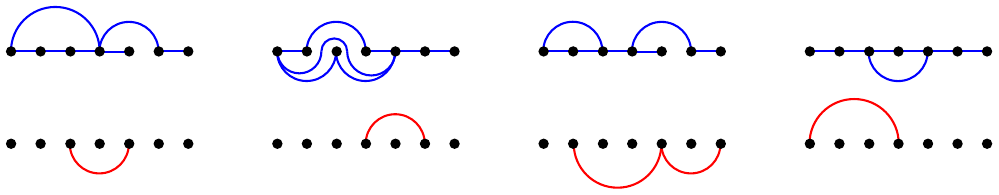}}
	\caption{The mandatory arcs (blue, top) and the forbidden arcs (red, bottom) of the four Schröder separating trees of \cref{fig:SchroderSeparatingTrees}.}
	\label{fig:mandatoryForbiddenArcsSchroderSeparatingTrees}
\end{figure}
\end{example}


\subsection{Preposets of simple congruences}
\label{subsec:preposetsSimpleCongruences}

The following statement is the analogue of \cref{prop:simpleImpliesSeparatingTrees}.

\begin{proposition}
\label{prop:simpleImpliesSchroderSeparatingTrees}
If~$\equiv$ is a simple essential congruence of the weak order, then the Hasse diagram of any $\equiv$-preposet is a Schröder separating tree.
\end{proposition}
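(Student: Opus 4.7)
The plan is to reduce this to \cref{prop:simpleImpliesSeparatingTrees} via \cref{lem:contractionSchroderSeparatingTree}. Given an $\equiv$-preposet~$\preccurlyeq$, I would first realize its cone~$\Cone(\preccurlyeq)$ as a face of some maximal cone of the quotient fan~$\quotientFan$. Concretely, pick any linear extension~$\sigma$ of~$\preccurlyeq$ and set~${\preccurlyeq_0 \, \eqdef \, \preccurlyeq_\sigma^\equiv}$. Then $\Cone(\preccurlyeq) \subseteq \Cone(\preccurlyeq_0)$, and since~$\quotientFan$ is a fan, $\Cone(\preccurlyeq)$ is in fact a face of~$\Cone(\preccurlyeq_0)$. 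Faces of a preposet cone are obtained by saturating some defining inequalities $x_a \le x_b$ indexed by cover relations of~$\preccurlyeq_0$, so $\preccurlyeq$ is obtained from~$\preccurlyeq_0$ by equating the endpoints of a set~$E$ of cover relations of~$\preccurlyeq_0$ (and taking the transitive closure).

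By \cref{prop:simpleImpliesSeparatingTrees}, the Hasse diagram of~$\preccurlyeq_0$ is a separating tree~$T$. The equivalence classes of~$\preccurlyeq$ are then precisely the vertex sets of the connected components of the spanning subgraph of~$T$ with edge set~$E$. Contracting these edges in~$T$ yields a tree (no multi-edges can appear since~$T$ is itself a tree, and no cover relation becomes redundant by transitivity since~$T$ has no cycles), and this tree is the Hasse diagram of~$\preccurlyeq$ on the parts of the corresponding set partition of~$[n]$.

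Finally, a separating tree is trivially a Schröder separating tree (whose nodes are singletons), so iterating \cref{lem:contractionSchroderSeparatingTree} on the edges of~$E$ shows that the resulting contracted tree is still a Schröder separating tree, which would conclude the proof. The main obstacle I foresee lies in the first step: one needs to verify carefully that every face of~$\Cone(\preccurlyeq_0)$ corresponds to equating endpoints of cover relations of~$\preccurlyeq_0$, rather than an arbitrary subset of the defining inequalities. This is a standard fact about cones cut out by inequalities of the form $x_a \le x_b$, but it deserves an explicit justification, likely by observing that each facet of~$\Cone(\preccurlyeq_0)$ is cut out by a single hyperplane~$x_a = x_b$ for some cover relation~$a$ to~$b$, and iterating.
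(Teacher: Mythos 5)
Your proposal is correct and follows essentially the same route as the paper: the paper's proof simply asserts that for a simple congruence the Hasse diagrams of the $\equiv$-preposets are the iterated contractions of the Hasse diagrams of the $\equiv$-posets, and then invokes \cref{prop:simpleImpliesSeparatingTrees} and \cref{lem:contractionSchroderSeparatingTree}. Your polyhedral justification of that contraction claim (every preposet cone is a face of a maximal cone, faces of a poset cone arise by saturating cover-relation inequalities, and tree Hasse diagrams contract cleanly without creating redundant covers) is precisely the detail the paper leaves implicit.
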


\begin{proof}
As~$\equiv$ is a simple congruence, the Hasse diagrams of the $\equiv$-preposets are precisely the (iterated) contractions of the Hasse diagrams of the $\equiv$-posets.
The statement thus immediately follows from \cref{prop:simpleImpliesSeparatingTrees,lem:contractionSchroderSeparatingTree,lem:decontractionSchroderSeparatingTree}.
\end{proof}

We now aim at characterizing the $\equiv$-preposets of a simple congruence~$\equiv$, analoguously to \cref{def:admissibleSeparatingTrees}.
Recall that we have defined the mandatory and forbidden arcs of a Schröder separating tree in \cref{def:mandatoryArcsSeparatingTree,def:forbiddenArcsSeparatingTree}.

\begin{definition}
\label{def:admissibleSchroderSeparatingTrees}
Fix a simple essential lattice congruence~$\equiv$ of the weak order.
A Schröder separating tree~$S$ is \defn{$\equiv$-admissible} if~$\arcs_\equiv$ contains all mandatory arcs of~$S$ and none of the forbidden arcs of~$S$.
\end{definition}

\begin{example}
For instance, the Schröder separating trees which are admissible for the sylvester congruence (resp.~$\decoration$-permutree congruence) are precisely the Schröder trees (resp.~Schröder  \mbox{$\decoration$-per}\-mutrees) on~$[n]$ to which we have deleted all pending leaves to keep only the internal nodes and internal edges.
\end{example}

\begin{remark}
Note that the $\equiv$-admissible separating trees are precisely the $\equiv$-admissible Schröder separating trees with $n$ nodes (or equivalently, where each node of~$S$ is a singleton).
\end{remark}

The following statements are analogues of \cref{lem:contractionSchroderSeparatingTree,lem:decontractionSchroderSeparatingTree} for $\equiv$-admissible Schröder separating trees.

\begin{lemma}
\label{lem:contractionAdmissibleSchroderSeparatingTree}
$\equiv$-admissible Schröder separating trees are stable by contraction.
\end{lemma}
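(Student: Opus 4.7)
The plan is to check the two admissibility conditions for the contraction $T$ of an $\equiv$-admissible Schröder separating tree $S$ along an edge $J_\circ \to J_\bullet$, writing $J_{\circ\bullet} \eqdef J_\circ \sqcup J_\bullet$ for the merged node. By \cref{lem:contractionSchroderSeparatingTree}, $T$ is already a Schröder separating tree. Since $\arcs_\equiv$ is a lower ideal of the subarc poset, it suffices to prove: \textbf{(A)} every mandatory arc of $T$ is a subarc of some mandatory arc of $S$; and \textbf{(B)} every forbidden arc of $T$ has some forbidden arc of $S$ as a subarc. Admissibility of $S$ will then force the mandatory arcs of $T$ into $\arcs_\equiv$ and keep the forbidden arcs of $T$ out.

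For (B), consider a forbidden up arc $u(M, m)$ of $T$ witnessed by a node $J$ with ancestor subtrees $L < R$, $M = \max L$, $m = \min R$ (forbidden down arcs are symmetric). If $J \ne J_{\circ\bullet}$, the ancestor subtrees of $J$ in $T$ and in $S$ have identical integer content, so $u(M, m)$ is itself a forbidden arc of $S$. If $J = J_{\circ\bullet}$, then $L, R$ pull back to ancestor subtrees of $J_\circ$ (not through $J_\bullet$) and/or of $J_\bullet$ in $S$; when both pull back to the same $J_\circ$ or $J_\bullet$ the arc is forbidden there, and in the mixed case (say $L$ an ancestor subtree of $J_\circ$ and $R$ an ancestor subtree of $J_\bullet$) the separating property at $J_\circ$ in $S$ yields $j_\circ \in J_\circ$ with $M < j_\circ < m' \le m$, where $m'$ is the minimum of the ancestor subtree of $J_\circ$ through $J_\bullet$; thus $u(M, m')$ is a forbidden up arc of $S$ at $J_\circ$ and a subarc of $u(M, m)$.

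For (A), consider a mandatory $T$-arc $(i, k, A, B)$ with endpoint nodes $I_T, K_T$. When neither is $J_{\circ\bullet}$, the arc arises from an edge or node of $S$ unchanged by the contraction and is a mandatory arc of $S$; the reachability sets $A, B$ match because separating at $J_\circ$ and $J_\bullet$ rules out any shortcut through $J_{\circ\bullet}$ contributing an integer of $]i, k[$. The same conclusion holds when $I_T = K_T = J_{\circ\bullet}$: the corresponding mandatory arc of $S$ lives on the edge $J_\circ - J_\bullet$ (when $i \in J_\circ, k \in J_\bullet$ or vice versa) or within $J_\circ$ or $J_\bullet$ alone. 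The delicate case is when exactly one of $I_T, K_T$ equals $J_{\circ\bullet}$; say $I_T = J_{\circ\bullet}$, so $K_T$ is adjacent in $S$ to a unique one of $J_\circ, J_\bullet$, call it $J_?$. If $i \in J_?$, the arc is already a mandatory arc of $S$ along the edge $J_? - K_T$. If $i$ lies on the other side, the subcase where $K_T$ is a child of $J_\bullet$ in $S$ is ruled out (separating at $J_\bullet$ for its children $J_\circ$ and $K_T$ would place an element of $J_\bullet$ inside $]i, k[$, contradicting mandatoriness), and the subcase where $K_T$ is a parent of $J_\bullet$ exhibits $(i, k, A, B)$ as a subarc of a suitable mandatory arc of $S$ along $J_\bullet - K_T$, within $K_T$, along $J_\circ - J_\bullet$, or within $J_\circ$, depending on the position of $J_\bullet$ relative to $]i, k[$.

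The main obstacle is verifying the inclusions $A \subseteq A_S$ and $B \subseteq B_S$ for the candidate super-arc in the delicate case, because the contraction can create new directed paths through $J_{\circ\bullet}$ absent in $S$. The uniform remedy is that any such shortcut must enter $J_{\circ\bullet}$ from the descendant side of $J_\bullet$ and leave via the ancestor side of $J_\circ$, or the symmetric configuration; separating at $J_\circ$ (or at $J_\bullet$) then places the offending integer on the wrong side of a separator $j_\circ \in J_\circ$ (or $j_\bullet \in J_\bullet$), so that it sits strictly outside $]i, k[$. Hence no shortcut contributes a spurious element to $A$ or $B$ within the arc, and the subarc relation follows.
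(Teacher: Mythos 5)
Your overall strategy is the same as the paper's: reduce everything to subarc relations (every mandatory arc of $T$ is a subarc of a mandatory arc of $S$, every forbidden arc of $T$ admits a forbidden arc of $S$ as a subarc) and invoke the ideal property of $\arcs_\equiv$, with the case analysis governed by which endpoint nodes equal $J_{\circ\bullet}$. Your treatment of the forbidden arcs is essentially identical to the paper's (including the choice of $m' = \min(R')$ for the ancestor subtree $R'$ of $J_\circ$ containing $R$, and the use of the separator $j_\circ \in J_\circ$ to guarantee $M < m' \le m$), and your exclusion of the subcase ``$K_T$ a child of $J_\bullet$'' via the separating property at $J_\bullet$ is also the paper's argument.

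The gap is in your verification of the containments $A \subseteq A_S$ and $B \subseteq B_S$ for the candidate super-arc. You try to lift directed paths from $T$ to $S$, and your ``uniform remedy'' only handles paths that \emph{pass through} $J_{\circ\bullet}$ via a shortcut (entering from a child of $J_\bullet$ and exiting to a parent of $J_\circ$). It does not address the genuine difficulty of the delicate case: there the $T$-arc's set $A$ is defined by reachability to $\{J_{\circ\bullet}, K_T\}$, while the super-arc of $S$ you must compare with (along $J_\circ \to J_\bullet$, or within $J_{\circ\bullet}$) has endpoint nodes $\{J_\circ, J_\bullet\}$ only. A directed path in $T$ from the node of some $\ell \in {]i,k[}$ to $K_T$ that avoids $J_{\circ\bullet}$ lifts to a path in $S$ ending at $K' = K_T$, which is \emph{not} one of the endpoint nodes of the super-arc; no shortcut is involved, so your remedy says nothing about why $\ell$ still lies in $A_S$. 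The paper sidesteps this entirely by arguing in the opposite (trivial) direction: every directed path of $S$ contracts to one of $T$, so $A_S \cap {]i,k[} \subseteq A$ and $B_S \cap {]i,k[} \subseteq B$; since $A \sqcup B = {]i,k[}$ and $A_S \sqcup B_S$ covers ${]i,k[}$, these reverse inclusions force $A = A_S \cap {]i,k[}$ and $B = B_S \cap {]i,k[}$, which is exactly the subarc relation. I recommend replacing your lifting argument by this partition trick; as written, the ``main obstacle'' you correctly identify is not actually overcome.
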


\begin{proof}
Consider the contraction~$T$ of an edge~$J_\circ \to J_\bullet$ in a $\equiv$-admissible Schröder separating tree~$S$, and denote by~$J_{\circ\bullet} \eqdef J_\circ \sqcup J_\bullet$ the merged node of~$T$.
We have seen in \cref{lem:contractionSchroderSeparatingTree} that~$T$ is a Schröder separating tree, so we just need to check that~$T$ is $\equiv$-admissible.

Consider first a mandatory arc~$(i,k,A,B)$ of~$T$.
Let~$I$ and~$K$ be the nodes of~$T$ such that~$i \in I$ and~$k \in K$, and similarly let~$I'$ and~$K'$ be the nodes of~$S$ such that~$i \in I'$ and~$k \in K'$.
Note that~$I' \subseteq I$ and~$K' \subseteq K$.
By \cref{def:mandatoryArcsSchroderSeparatingTree}, $I$ and~$K$ either coincide or form an edge of~$T$, and~${{]i,k[} \cap (I \cup K) = \varnothing}$.
Assume first that~$I'$ and~$K'$ coincide or form an edge of~$S$.
Since~${{]i,k[} \cap (I' \cup K') \subseteq {]i,k[} \cap (I \cup K) = \varnothing}$ and any directed path in~$S$ contracts to a directed path in~$T$, we then obtain that~$(i,k,A,B)$ is also a mandatory arc of~$S$, hence belongs to~$\arcs_\equiv$ since~$S$ is $\equiv$-admissible.
Assume now that~$I'$ and~$K'$ are distinct and there is no edge between~$I'$ and~$K'$ in~$S$.
Up to symmetry, we can then assume that~$I = J_{\circ\bullet}$, that~$I' = J_\circ$ and that~$K = K'$ is a neighbor of~$J_\bullet$ in~$S$.
Since~${]i,k[} \cap (I \cup K) = \varnothing$, we have~$i < k < j$ or~$j < i < k$  for all~$j \in J_\bullet$.
Since~$S$ is a Schröder separating tree and contains the edge~$I' = J_\circ \to J_\bullet$, it cannot contain the edge~$K' \to J_\bullet$, as otherwise~$I'$ and~$K'$ would be contained in two descendant subtrees of~$J_\bullet$ not separated by~$J_\bullet$.
Hence we obtain that~$S$ contains the path~$I' \to J_\bullet \to K'$.
Assume for instance that there is~$j \in J_\bullet$ with~$i < k < j$.
As~$i \in I'$ and~$j \in J_\bullet$ and~$I' \to J_\bullet$ in~$S$, we obtain an arc~$(i, j, A', B')$ of~$S$ for some~$A' \sqcup B' = {]i,j[}$, which is thus in~$\arcs_\equiv$ since $S$ is $\equiv$-admissible.
As~$i < k < j$ and any directed path in~$S$ contracts to a directed path in~$T$, we obtain that~$(i,k,A,B)$ is a subarc of~$(i,j,A',B')$, thus also belongs to~$\arcs_\equiv$.

Consider now a forbidden up arc~$u(M,m)$ of~$T$.
Let~$J$ be the node of~$T$ with two ancestor subtrees~$L$ and~$R$ with~$L < R$ such that~$M = \max(L)$ and~$m = \min(R)$.
If~$L$ and~$R$ are both ancestor subtrees of the same node of~$S$, then the up arc~$u(M,m)$ is not in~$\arcs_\equiv$ since~$S$ is \mbox{$\equiv$-admissible}.
Otherwise, we have that~$J = J_{\circ\bullet}$, and $L$ is an ancestor subtree of~$J_\circ$ while~$R$ is an ancestor subtree of~$J_\bullet$ (or the opposite, which is similar).
Let~$R'$ be the ancestor subtree of~$J_\circ$ in~$S$ containing~$R$, and let~$m' \eqdef \max(R')$.
As~${R \subseteq R'}$, we have $m' < m$, so that the up arc~$u(M, m')$ is a subarc of the arc~$u(M,m)$.
We conclude that the arc~$u(M,m)$ is not in~$\arcs_\equiv$ since its subarc~$u(M, m')$ is not in~$\arcs_\equiv$ since~$S$ is $\equiv$-admissible.
The proof is symmetric for the forbidden down arcs.
We conclude that~$T$ is indeed $\equiv$-admissible.
\end{proof}

\begin{lemma}
\label{lem:decontractionAdmissibleSchroderSeparatingTree}
The contraction minimal $\equiv$-admissible Schröder separating trees are precisely the $\equiv$-admissible separating trees.
\end{lemma}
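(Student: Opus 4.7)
The forward direction is immediate: a $\equiv$-admissible separating tree has only singleton nodes, so by Lemma~\ref{lem:decontractionSchroderSeparatingTree} it is already contraction minimal among Schröder separating trees, hence a fortiori among $\equiv$-admissible ones.

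For the converse, I would argue the contrapositive. Given a $\equiv$-admissible Schröder separating tree $S$ that is not a separating tree, I produce a $\equiv$-admissible separating tree $T$ whose contraction is $S$. Since the Lemma~\ref{lem:decontractionSchroderSeparatingTree} construction does not in general preserve $\equiv$-admissibility (as one can already check on small examples where the canonical redistribution of subtrees creates a new forbidden up or down arc that belongs to $\arcs_\equiv$), I would instead proceed via the insertion map of Section~\ref{subsec:insertionMap}. Pick a linear extension $\sigma$ of the preposet of $S$ in which the elements of each node of $S$ occupy consecutive positions, listed in increasing order of value; such a $\sigma$ always exists, obtained by topologically sorting the quotient poset of classes of $S$ and listing each class consecutively in increasing order. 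Insert $\sigma$ to obtain a $\equiv$-admissible separating tree $T$ by Proposition~\ref{prop:admissibleSeparatingTrees}.

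The next step is to verify that $T$ contracts to $S$. For each node $J = \{j_1 < \dots < j_p\}$ of $S$ and each pair of consecutive elements $j_k, j_{k+1}$, the positions of $j_k, j_{k+1}$ in $\sigma$ are adjacent, so the corresponding box pair is trivially $\prec$-maximal. The arc it produces coincides with the internal mandatory arc of $J$ in $S$: indeed, for any $j \in {]j_k, j_{k+1}[}$, the node of $j$ is necessarily comparable to $J$ in the preposet of $S$ (otherwise $J$ and the node of $j$ would lie in separated subtrees of a common ancestor, but no element of that ancestor can strictly separate them in value), so $j$ lies before $J$ in $\sigma$ exactly when $j \in A_S$. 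This arc therefore lies in $\arcs_\equiv$ by $\equiv$-admissibility of $S$, and $T$ contains a cover between $j_k$ and $j_{k+1}$. Hence the elements of each node of $S$ form a path in $T$, and contracting these paths yields, by Lemma~\ref{lem:contractionAdmissibleSchroderSeparatingTree}, a $\equiv$-admissible Schröder separating tree $\bar T$ on the same partition of $[n]$ as $S$, admitting $\sigma$ as a common linear extension with $S$.

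The final step is to identify $\bar T = S$, which I would establish via a Schröder analogue of Lemma~\ref{lem:admissibleSeparatingTreesWithCommonLinearExtensions}: two $\equiv$-admissible Schröder separating trees admitting a common linear extension coincide. The proof should follow the pattern of Lemma~\ref{lem:admissibleSeparatingTreesWithCommonLinearExtensions}, locating a discrepancy between the two trees and using a mandatory arc of one together with a forbidden arc of the other to reach a contradiction, with Lemmas~\ref{lem:separatingTree1} and~\ref{lem:SchroderSeparatingTree1} providing the needed path constructions. The main obstacle is proving this Schröder uniqueness lemma in full generality: multi-element nodes introduce additional cases, in particular discrepancies internal to a single node (when an integer appears in a non-singleton node of one tree but in a different node of the other), which do not arise in the pure separating-tree setting and must be handled by combining the internal mandatory arcs of Definition~\ref{def:mandatoryArcsSchroderSeparatingTree} with the forbidden arcs of Definition~\ref{def:forbiddenArcsSchroderSeparatingTree}.
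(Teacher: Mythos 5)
Your route is genuinely different from the paper's, and its first three steps are sound: choosing a linear extension $\sigma$ of $S$ in which each node is consecutive and increasing, observing that adjacent positions give $\prec$-maximal boxes whose arcs are exactly the internal mandatory arcs of $S$ (hence lie in $\arcs_\equiv$), and concluding via \cref{coro:insertionMap} and \cref{prop:admissibleSeparatingTrees} that each node of $S$ spans a directed path in the inserted $\equiv$-admissible separating tree $T$. You also correctly diagnose that the splitting rule of \cref{lem:decontractionSchroderSeparatingTree} can destroy admissibility; the paper says as much.

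The gap is the final identification $\bar T = S$. Knowing that each node of $S$ is a path in $T$ only pins down the $n-k$ intra-node edges of $T$; the remaining $k-1$ edges of $\bar T$ could a priori connect the $k$ parts differently than $S$ does, and a common linear extension only rules out reversed comparabilities, not an incomparability in $S$ becoming a comparability in $\bar T$. So the entire converse rests on the ``Schröder analogue'' of \cref{lem:admissibleSeparatingTreesWithCommonLinearExtensions}, which you state, flag as the main obstacle, and do not prove; it is not a formal consequence of the singleton case, and proving it (locating a turning node on the path in $S$, producing a forbidden arc $u(M,m)$ or $d(M,m)$ of $S$, and pairing it against a suitable mandatory arc of $\bar T$ spanning $[M,m]$ -- which need not exist for an arbitrary pair of elements of the two endpoints) is comparable in difficulty to the paper's own argument. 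The paper sidesteps this entirely: it splits one non-singleton node of $S$ into $J_\circ \to J_\bullet$ using a reattachment rule that consults $\arcs_\equiv$ (an ancestor subtree $X$ goes to $J_\circ$ only if $X < \max(J_\circ)$ \emph{and} $u(\max(X),\min(J_\bullet)) \notin \arcs_\equiv$, and symmetrically below), so that $S$ is the contraction of $T$ by construction, and the work consists of verifying that this $T$ is separating and $\equiv$-admissible, the latter using simplicity of $\equiv$ to split a putative minimal non-arc along the $a_j$'s and $b_j$'s. As written, your proposal is an outline whose decisive step is missing.
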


\begin{proof}
A $\equiv$-admissible separating tree is clearly a contraction minimal $\equiv$-admissible Schröder separating tree since all nodes are singletons.
Conversely, consider a $\equiv$-admissible Schröder separating tree~$T$ which is not a $\equiv$-admissible separating tree.
Then it has a node~$J_{\circ\bullet}$ with~$|J_{\circ\bullet}| > 1$.
Let~$J_{\circ\bullet} = J_\circ \sqcup J_\bullet$ for some arbitrary~${J_\circ \ne \varnothing \ne J_\bullet}$ with~$J_\circ < J_\bullet$.
Let~$S$ be the tree obtained by splitting the node~$J_{\circ\bullet}$ of~$S$ into two nodes~$J_\circ$ and~$J_\bullet$, adding an edge~$J_\circ \to J_\bullet$, and attaching each ancestor subtree~$X$ of~$J_{\circ\bullet}$ to~$J_\circ$ if~$X < \max(J_\circ)$ and the up arc~$u(\max(X), \min(J_\bullet))$ is not in~$\arcs_\equiv$ and to~$J_\bullet$ otherwise, and similarly each descendant subtree~$X$ of~$J_{\circ\bullet}$ to~$J_\bullet$ if~$\min(J_\bullet) < X$ and the down arc~$d(\max(J_\circ), \min(X))$ is not in~$\arcs_\equiv$ and to~$J_\circ$ otherwise.
Note that our definition of~$S$ here slightly differs from that of the proof of \cref{lem:decontractionSchroderSeparatingTree} since we need to additionally ensure that~$S$ is still $\equiv$-admissible.
By definition, $S$ is a tree on the parts of a partition of~$[n]$, and $T$ is the contraction of the edge~$J_\circ \to J_\bullet$ in the tree~$S$, so we just need to check that~$S$ is separating and $\equiv$-admissible.

We first prove that~$S$ is separating. 
Consider two ancestor subtrees~$X$ and~$Y$ of a node~$J$ of~$S$ (the case of descendant subtrees is symmetric).
We want to prove that there is~$j \in J$ separating~$X$ from~$Y$.
The proof is identical to that of \cref{lem:decontractionSchroderSeparatingTree} if~$J \ne J_\bullet$.
Assume now that~$J = J_\bullet$.
We thus obtain that $X$ and~$Y$ were ancestor subtrees of~$J_{\circ\bullet}$ in~$T$.
As~$T$ is a Schröder separating tree, there is~$j \in J_{\circ\bullet}$ separating~$X$ and~$Y$.
Assume by symmetry that~$X < j < Y$.
As~$T$ is $\equiv$-admissible, the forbidden up arc~$u(\max(X), \min(Y))$ of~$T$ is not in~$\arcs_\equiv$.
Since~$X$ is attached to~$J_\bullet$, we have two options:
\begin{itemize}
\item either~$X \not< \max(J_\circ)$, then we obtain that~$j \in J_\bullet = J$ and~$j$ separates~$X$ from~$Y$,
\item or~$X < \max(J_\circ)$ and the up arc~$u(\max(X), \min(J_\bullet))$ is in~$\arcs_\equiv$. As~$u(\max(X), \min(Y))$ is not in~$\arcs_\equiv$, we obtain that~$\min(J_\bullet) < \min(Y)$, so that~$\min(J_\bullet)$ separates~$X$ from~$Y$.
\end{itemize}
We conclude that~$S$ is indeed a Schröder separating tree.

Consider now a mandatory arc~$(i,k,A,B)$ of~$S$.
Let~$I$ and~$K$ be the nodes of~$S$ such that~$i \in I$ and~$k \in K$, and similarly let~$I'$ and~$K'$ be the nodes of~$T$ such that~$i \in I'$ and~$k \in K'$.
Note that~$I \subseteq I'$ and~$K \subseteq K'$.
Moreover, either~$I = I'$ or~$I \in \{J_\circ, J_\bullet\}$ and~$I' = J_{\circ\bullet}$ and similarly for~$K$ and~$K'$.
By \cref{def:mandatoryArcsSchroderSeparatingTree}, $I$ and~$K$ either coincide or form an edge of~$S$, and~${{]i,k[} \cap (I \cup K) = \varnothing}$.
Hence, $I'$ and~$K'$ either coincide or form an edge of~$T$.
If we have~${]i,k[} \cap (I' \cup K') = \varnothing$, then we obtain that $(i,k,A,B)$ is a mandatory arc of~$T$ since any directed path in~$S$ contracts to a directed path in~$T$.
As $T$ is $\equiv$-admissible, we therefore obtain that~$(i,k,A,B)$ is in~$\arcs_\equiv$ as desired.
Hence, we can assume that~${]i,k[} \cap (I' \cup K') \ne \varnothing$.
Note that this cannot happen if~$I = I'$ and~$K = K'$ (since we would have~$I \cup K = I' \cup K'$), nor if~$I \ne I'$ and~$K \ne K'$ (because, if~$I \ne K$ then~$I \cup K = J_{\circ\bullet} = I' \cup K'$, while if~$I = K$ then~${]i,k[} \subseteq I \subseteq I'$ so that~${]i,k[} \cap (I' \cup K') = {]i,k[} \cap I = \varnothing$).
Hence, we can assume by symmetry that~$I = J_\circ$ and~$I' = J_{\circ\bullet}$ while~$K = K'$ is a neighbor of~$J_\circ$ in~$S$ distinct from~$J_\circ$ and~$J_\bullet$.
Moreover, we must have~$i < \min(J_\bullet) < k$, so that~$i = \max(J_\circ)$ and~$k = \min(K \ssm [i])$ since~${{]i,k[} \cap (I \cup K) = \varnothing}$.
Since~$\max(J_\circ) = i < k \in K$, the definition of~$S$ prevents~$K$ to be a parent of~$J_\circ$.
Hence, $K$ is a child of~$J_\circ$, and we let~$X$ be the descendant subtree of~$J_\circ$ containing~$K$, and~$m \eqdef \min(X)$.
Note that the down arc~$d(i,m)$ is a subarc of~$(i,k,A,B)$.
We moreover claim that it is always in~$\arcs_\equiv$.
Indeed, the definition of~$S$ leaves us with two possibilities:
\begin{itemize}
\item If~$\min(J_\bullet) \not< X$, then~$m < \min(J_\bullet)$, hence~$d(i,m)$ is a subarc of the mandatory arc of~$T$ defined by the two consecutive elements~$\max(J_\circ)$ and~$\min(J_\bullet)$ of~$J_{\circ\bullet}$, which is in~$\arcs_\equiv$ since~$T$ is $\equiv$-admissible.
\item If~$\min(J_\bullet) < X$ then~$d(i,m) = d(\max(J_\circ), \min(X))$ is in~$\arcs_\equiv$ by construction of~$S$.
\end{itemize}
We now consider two increasing sequences~$i = a_0 < \dots < a_r = k$ and~$i = b_0 < \dots < b_s = k$ of elements of~$[i,k]$, where
\begin{itemize}
\item $m = a_1 < \dots < a_r = k$ are such that for all~$j \in [r-1]$, the nodes~$A_j$ and~$A_{j+1}$ of~$T$ containing~$a_j$ and~$a_{j+1}$ either coincide or form an edge of~$T$, and~${{]a_j, a_{j+1}[} \cap (A_j \cup A_{j+1}) = \varnothing}$,
\item $b_1 < \dots < b_{s-1}$ denote the elements of~${]i,k[} \cap J_\bullet$.
\end{itemize}
Note that~$\{a_1, \dots, a_{r-1}\} \subseteq A$ while~$\{b_1, \dots, b_{s-1}\} \subseteq B$.
Moreover, $(a_{j-1}, a_j)$ defines a mandatory arc~$\alpha_j$ of~$T$ for each~$j \in [2,r]$ (because $a_{j-1}$ and~$a_j$ are consecutive either in a common node of~$T$ or in neighbors in~$T$).
Similarly, $(b_{j-1}, b_j)$ defines a mandatory arc~$\beta_j$ of~$T$ for each~$j \in [s]$ (because $b_{j-1}$ and~$b_j$ are consecutive in the node~$J_{\circ\bullet}$ of~$T$ for~$j \in [s-1]$, and $b_{s-1} \in J_{\circ\bullet}$ while $b_s = k \in K$ are consecutive in neighbors of~$T$).
Moreover, all these arcs are subarcs of~$(i,k,A,B)$ and belong to~$\arcs_\equiv$ since~$T$ is $\equiv$-admissible.
Finally, we have seen that~$\alpha_1 \eqdef d(a_0,a_1) = d(i,m)$ is a subarc of~$(i,k,A,B)$ and belongs to~$\arcs_\equiv$.
Assume now by means of contradiction that~$(i,k,A,B)$ is not in~$\arcs_\equiv$.
Let~$i \le p < q \le k$ be such that the subarc~$\gamma \eqdef (p, q, A \cap {]p,q[}, B \cap {]p,q[})$ of~$(i,k,A,B)$ is minimal in~$\arcs \ssm \arcs_\equiv$ for the subarc order.
There is no~$j \in [r]$ (resp.~$j \in [s]$) such that~${a_{j-1} \le p < q \le a_j}$ (resp.~$b_{j-1} \le p < q \le b_j$) since~$\alpha_j$ (resp.~$\beta_j$) is a subarc of~$(i,k,A,B)$ and belongs to~$\arcs_\equiv$.
Hence, there is~$j \in [r-1]$ (resp.~$j \in [s-1]$) such that~$p < a_j < q$ (resp.~$p < b_j < q$).
Since~$\{a_1, \dots, a_{r-1}\} \subseteq A$ while~$\{b_1, \dots, b_{s-1}\} \subseteq B$, the subarc~$\gamma$ of~$(i,k,A,B)$ is neither an up arc, nor a down arc.
This contradicts our assumption that~$\equiv$ is a simple congruence.
We finally conclude that the mandatory arc~$(i,k,A,B)$ indeed belongs to~$\arcs_\equiv$.

Consider now a forbidden up arc~$u(M,m)$ of~$S$.
Let~$J$ be the node of~$S$ with two ancestor subtrees~$L$ and~$R$ with~$L < R$ such that~$M = \max(L)$ and~$m = \min(R)$.
If~$L$ and~$R$ are both ancestor subtrees of the same node of~$T$, then the up arc~$u(M,m)$ is not in~$\arcs_\equiv$ since~$T$ is \mbox{$\equiv$-admissible}.
Hence, we can assume that~$J = J_\circ$ and~$L \not\supseteq J_\bullet$ while~$R \supseteq J_\bullet$.
Since~$L$ is attached to~$J_\circ$, the up arc~$u(\max(L), \min(J_\bullet))$ is not in~$\arcs_\equiv$.
We can thus assume that~$m < \min(J_\bullet)$.
Let~$Y$ be the ancestor subtree of~$J_\bullet$ containing~$m$.
Then~$L$ and~$Y$ are distinct ancestor subtrees of~$J_{\circ\bullet}$ in~$T$ with~$L < Y$ and~$M = \max(L)$ while~$m = \min(Y)$, so that~$u(M,m)$ is a forbidden up arc of~$T$.
Since~$T$ is $\equiv$-admissible, we conclude that~$u(M,m)$ is not in~$\arcs_\equiv$.
The proof is symmetric for the forbidden down arcs.
We conclude that~$S$ is indeed $\equiv$-admissible.
\end{proof}

\begin{proposition}
\label{prop:admissibleSchroderSeparatingTrees}
If~$\equiv$ is a simple essential congruence of the weak order, then the Hasse diagrams of the $\equiv$-preposets are precisely the $\equiv$-admissible Schröder separating trees.
\end{proposition}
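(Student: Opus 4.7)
The plan is to reduce the statement to its analogue for posets and separating trees (\cref{prop:admissibleSeparatingTrees}) via the edge contraction operation, using \cref{lem:contractionAdmissibleSchroderSeparatingTree,lem:decontractionAdmissibleSchroderSeparatingTree}. The key observation, already invoked in the proof of \cref{prop:simpleImpliesSchroderSeparatingTrees}, is that because $\equiv$ is simple, the Hasse diagrams of the $\equiv$-preposets are precisely the iterated edge contractions of the Hasse diagrams of the $\equiv$-posets. Geometrically, this reflects the fact that the faces of a maximal simplicial cone of $\quotientFan$ are obtained by saturating inequalities along cover relations, which at the combinatorial level amounts to identifying the two endpoints of an edge of the Hasse diagram.

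For the forward direction, let $P$ be a $\equiv$-preposet with Hasse diagram $T$. Then $T$ is an iterated edge contraction of the Hasse diagram $T_0$ of some $\equiv$-poset $P_0$. By \cref{prop:admissibleSeparatingTrees}, the tree $T_0$ is a $\equiv$-admissible separating tree, hence a $\equiv$-admissible Schröder separating tree. Iterating \cref{lem:contractionAdmissibleSchroderSeparatingTree} then shows that $T$ is a $\equiv$-admissible Schröder separating tree.

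For the backward direction, let $S$ be a $\equiv$-admissible Schröder separating tree. As long as $S$ has a node of size at least~$2$, the construction from the proof of \cref{lem:decontractionAdmissibleSchroderSeparatingTree} produces a $\equiv$-admissible Schröder separating tree $S'$ with one more node such that $S$ is the contraction of a single edge of $S'$. Since each such step strictly increases the number of nodes, this process terminates at a contraction-minimal $\equiv$-admissible Schröder separating tree $T_0$, which by \cref{lem:decontractionAdmissibleSchroderSeparatingTree} is a $\equiv$-admissible separating tree. By \cref{prop:admissibleSeparatingTrees}, $T_0$ is the Hasse diagram of some $\equiv$-poset $P_0$. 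Performing the sequence of contractions backwards on $P_0$ (that is, identifying the elements paired by the contracted cover relations) produces a preposet $P$ whose Hasse diagram is $S$, and $P$ is a $\equiv$-preposet because $\Cone(P)$ is the face of $\Cone(P_0)$ obtained by saturating the corresponding inequalities, which is a face of the simplicial fan~$\quotientFan$.

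The main obstacle is the correspondence invoked at the outset between $\equiv$-preposets and iterated edge contractions of $\equiv$-posets. Although natural geometrically, justifying it requires verifying both that every edge contraction of the Hasse diagram of a $\equiv$-poset corresponds to a face of its simplicial cone (hence to a $\equiv$-preposet, using that $\quotientFan$ is a simplicial fan by \cref{thm:simpleCongruences}), and conversely that every face of such a simplicial cone arises from exactly such a contraction. Once this correspondence is in hand, the proof assembles purely by combining the two contraction lemmas with the vertex-level statement of \cref{prop:admissibleSeparatingTrees}.
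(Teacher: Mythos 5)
Your proposal is correct and follows exactly the paper's route: the paper's proof likewise asserts that for a simple congruence the Hasse diagrams of the $\equiv$-preposets are precisely the iterated edge contractions of the Hasse diagrams of the $\equiv$-posets, and then combines \cref{prop:admissibleSeparatingTrees} with \cref{lem:contractionAdmissibleSchroderSeparatingTree,lem:decontractionAdmissibleSchroderSeparatingTree} in the same two directions you describe. Your version is simply more explicit, in particular in flagging (and sketching a justification for) the preposet-versus-contraction correspondence that the paper takes for granted.
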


\begin{proof}
As $\equiv$ is a simple congruence, the Hasse diagrams of the $\equiv$-preposets are precisely the (iterated) contractions of the Hasse diagrams of the $\equiv$-posets.
The statement thus follows from \cref{prop:admissibleSeparatingTrees,lem:contractionAdmissibleSchroderSeparatingTree,lem:decontractionAdmissibleSchroderSeparatingTree}.
\end{proof}

\begin{corollary}
The contraction poset on $\equiv$-admissible Schröder separating trees is isomorphic to the face lattice of the quotientope~$\Quotientope$ (or to the reversed face lattice of the quotient fan~$\quotientFan$).
\end{corollary}

\begin{remark}
The number of $\equiv$-admissible Schröder separating trees is
\[
\sum_T 2^{\#\des(T)}
\]
where the sum ranges over all $\equiv$-admissible separating trees, and~$\des(T)$ denotes the descents of~$T$ (\ie~the edges~$i \leftarrow j$ with~$1 \le i < j \le n$).
\end{remark}

\begin{remark}
\cref{prop:whichCongruences} extends verbatim to Schröder separating trees.
\end{remark}

\begin{example}
For instance, \cref{fig:minMaxCongruencesSchroderSeparatingTrees} shows the subarc minimal uncontracted arcs of the minimal (blue, top) and maximal (red, bottom) simple congruences for the four Schröder separating trees of \cref{fig:SchroderSeparatingTrees}.

\begin{figure}
	\capstart
	\centerline{\includegraphics[scale=.85]{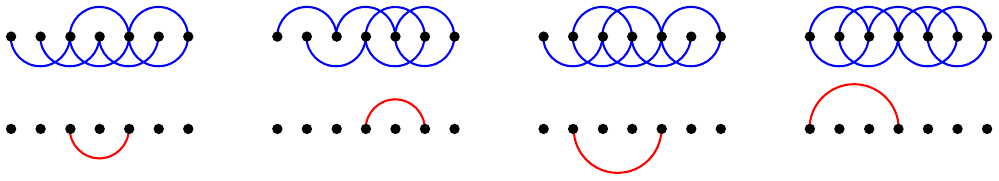}}
	\caption{The subarc minimal uncontracted arcs of the minimal (blue, top) and maximal (red, bottom) simple congruences for the four Schröder separating trees of \cref{fig:SchroderSeparatingTrees} (whose mandatory and forbidden arcs are shown in \cref{fig:mandatoryForbiddenArcsSchroderSeparatingTrees}).}
	\label{fig:minMaxCongruencesSchroderSeparatingTrees}
\end{figure}
\end{example}


\section{Algebraic structure}
\label{sec:algebraicStructure}

This section is devoted to algebraic aspects of the essential simple congruences.
Since there is no natural coproduct in this context, we shall only sketch the algebraic structure and refer the reader to~\cite{ChatelPilaud, PilaudPons-permutrees, Pilaud-arcDiagramAlgebra} for more detailed explanations and basic definitions.

The natural algebra structure between sets of permutations is given by the shuffle product of the Hopf algebra on permutations of C.~Reutenauer and C.~Malvenuto~\cite{MalvenutoReutenauer}.
Recall that the \defn{shifted shuffle} of two permutations~$\rho \in \f{S}_m$ and~$\sigma \in \f{S}_n$ is the set~$\rho \shiftedShuffle \sigma$ of permutations of~$\f{S}_{m+n}$ whose first $m$ (resp.~last~$n$) values appear in the same order as~$\rho$ (resp.~$\sigma$).
For instance,
\[
{\red 12} \shiftedShuffle {\darkblue 231} = \{ {\red 12}{\darkblue 453}, {\red 1}{\darkblue 4}{\red 2}{\darkblue 53}, {\red 1}{\darkblue 45}{\red 2}{\darkblue 3}, {\red 1}{\darkblue 453}{\red 2}, {\darkblue 4}{\red 12}{\darkblue 53}, {\darkblue 4}{\red 1}{\darkblue 5}{\red 2}{\darkblue 3}, {\darkblue 4}{\red 1}{\darkblue 53}{\red 2}, {\darkblue 45}{\red 12}{\darkblue 3}, {\darkblue 45}{\red 1}{\darkblue 3}{\red 2}, {\darkblue 453}{\red 12} \}.
\]

Given two essential simple congruences~$\equiv$ on~$\f{S}_m$ and~$\equiv'$ on~$\f{S}_n$ and two congruence classes~$R$ of~$\f{S}_m/{\equiv}$ and~$S$ of~$\f{S}_n/{\equiv'}$, we can consider the shuffle product~$R \shiftedShuffle S = \bigcup_{\rho \in R, \sigma \in S} \rho \shiftedShuffle \sigma$.
To provide an algebraic structure, we need~$R \shiftedShuffle S$ to be a disjoint union of classes of an essential simple congruence that depends on~$\equiv$ and~$\equiv'$.
There are various possible solutions, but we want our choice to specialize to the existing constructions for binary trees~\cite{LodayRonco}, Cambrian trees~\cite{ChatelPilaud}, and permutrees~\cite{PilaudPons-permutrees}.

To this end, let us define a \defn{birational sequence} of length $n$ as a sequence~$\b{r}$ of~$n$ pairs of non-negative rational numbers.
For instance, $((0,0), (\frac12,\frac12), (\frac13,\frac12), (\frac12,\frac12), (1,1))$ is a birational sequence of length~$5$.
We represent~$\b{r}$ by placing vertical walls above and below the points of~$[n]$, each labeled by a rational number.
We associate to~$\b{r}$ the simple congruence~$\equiv_\b{r}$ whose subarc minimal forbidden arcs are the shortest up or down arcs that cross walls whose label sums are greater than or equal to~$1$.
For example, the birational sequence~$((0,0), (\frac12,\frac12), (\frac13,\frac12), (\frac12,\frac12), (1,1))$ forbids the arcs $d(1, 4), u(1, 5), d(2, 5)$ and all their superarcs.
See \cref{fig:birationalSequence}.
Note that the first and last birationals of~$\b{r}$ do not matter at the moment since no arcs cross these walls.
We first observe that this model encompasses all essential simple congruences.

\begin{figure}[h]
	\capstart
	\centerline{\includegraphics[scale=1]{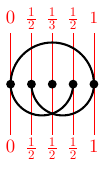}}
	\caption{Representation of the birational sequence $((0,0), (\frac12,\frac12), (\frac13,\frac12), (\frac12,\frac12), (1,1))$ and its corresponding minimal forbidden arcs $d(1, 4), u(1, 5), d(2, 5)$.}
	\label{fig:birationalSequence}
\end{figure}

\begin{proposition}
The essential simple congruences are precisely the congruences~$\equiv_\b{r}$ defined by birational sequences~$\b{r}$.
\end{proposition}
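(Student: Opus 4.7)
The plan is to prove the two directions separately. The forward direction that each~$\equiv_\b{r}$ is essential and simple is immediate from the definition: by construction, every subarc minimal element of~$\arcs_n \ssm \arcs_{\equiv_\b{r}}$ is an up or down arc, hence~$\equiv_\b{r}$ is simple, and as long as we normalize~$\b{r}$ so that walls at adjacent positions sum to at most~$1$, the basic arcs~$(i, i+1, \varnothing, \varnothing)$ remain uncontracted, giving essentiality.

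For the backward direction, fix an essential simple congruence~$\equiv$. Using~\cref{def:simpleCongruence} together with the structural description recalled in~\cref{subsec:simpleCongruences}, the subarc minimal elements of~$\arcs_n \ssm \arcs_\equiv$ split into two independent pairwise nonnesting families: a collection~$\mathcal{U} = \{u(i_r, j_r) : r \in [k]\}$ of up arcs, ordered so that~$i_1 < \cdots < i_k$ and~$j_1 < \cdots < j_k$ (with each~$j_r - i_r \geq 2$ by essentiality), and, symmetrically, a collection~$\mathcal{D}$ of down arcs. Since the upper walls govern only forbidden up arcs and the lower walls only forbidden down arcs, the problem decouples into two independent instances of the following lemma: for any nonnesting family~$\mathcal{I} = \{[i_r, j_r]\}_{r=1}^k$ of intervals in~$[n]$ with each~$j_r - i_r \geq 2$, there exist positive rationals~$a_1, \ldots, a_n$ such that, for every interval~$J \subseteq [n]$, the sum~$\sum_{l \in J} a_l$ exceeds~$1$ if and only if~$J$ contains some~$[i_r, j_r]$.

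To prove this lemma I would exploit the staircase structure of~$\mathcal{I}$. The natural attempt of concentrating weight~$\tfrac{1}{2}+\eta$ at each endpoint~$i_r, j_r$ and putting tiny residuals elsewhere makes every~$a_{I_r}$ exceed~$1$, but it fails whenever a ``bad'' interval~$J$ (one containing no~$I_r$) accumulates two endpoints from distinct~$I_r$'s, as already happens for~$\mathcal{I} = \{[1,4],[2,5]\}$ with~$J = [1,3]$. To handle overlaps I would pursue either (a) an inductive construction, adding intervals one by one and locally rebalancing weights at each new overlap so that each new~$a_{I_r}$ surpasses~$1$ while previously controlled bad sums remain at most~$1$, or (b) a linear-programming feasibility argument verifying the Farkas condition combinatorially: any infeasibility certificate would yield a nonnegative covering of every~$I_r$ by bad intervals with aggregate weight strictly less than the total~$I_r$-mass, a configuration precluded by the nonnesting hypothesis. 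Clearing a common denominator at the end produces the required positive rationals. The main obstacle is precisely this overlap analysis, which is where the nonnesting hypothesis is genuinely used and where I expect the proof to become technical.
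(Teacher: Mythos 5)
Your reduction is the same as the paper's: decouple the up and down arcs (upper walls only affect up arcs, lower walls only down arcs), and reduce to constructing, for a nonnesting family of intervals, nonnegative weights whose sums over subintervals exceed~$1$ exactly on the intervals containing a member of the family. You have also correctly diagnosed that the naive ``put $\tfrac12+\eta$ at each endpoint'' weighting fails on overlapping intervals. But at that point the proposal stops: neither route (a) nor route (b) is carried out, and the overlap analysis you defer is the entire content of the proof. As written, there is a genuine gap --- you have stated the right lemma but not proved it.

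For the record, the paper resolves the overlap problem with a concrete inductive step that exploits the nonnesting order. Order the minimal forbidden up arcs $u(a_1,b_1),\dots,u(a_k,b_k)$ so that $a_1<\dots<a_k$ and $b_1<\dots<b_k$ (equivalent orderings, by nonnesting), and strengthen the induction hypothesis to require that the weight sequence forbidding the first $k-1$ arcs \emph{vanishes outside} ${]a_1,b_{k-1}[}$. To add $u(a_k,b_k)$: first add a uniform $\varepsilon$ on ${]a_k,b_k[}$, small enough (by openness of the ``not forbidden'' condition) to forbid nothing new; then dump all the remaining mass needed to bring the total over ${]a_k,b_k[}$ up to $1$ at the single position $b_k-1$. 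Since $b_k-1\ge b_{k-1}$, this position lies outside the support of the previous sequence, so no arc with right endpoint $<b_k$ ever sees the large new weight; and any up arc reaching $b_k$ but starting strictly right of $a_k$ misses the strictly positive weight at $a_k+1$ and hence stays below $1$. This is exactly the ``local rebalancing'' your option (a) gestures at, and the support condition in the strengthened induction hypothesis is what makes it go through; without some such device your induction has no reason to preserve the previously controlled bad sums. Two smaller remarks: your normalization for essentiality is unnecessary, since an arc crosses only the walls strictly between its endpoints, so the basic arcs $(i,i+1,\varnothing,\varnothing)$ cross no walls and are never forbidden; and your lemma should be phrased for the open interiors ${]i_r,j_r[}$ rather than the closed intervals, since that is what the wall-crossing condition actually measures.
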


\begin{proof}
The forbidden arcs above and below the axis depending on disjoint sets of rationals, we can limit ourselves to proving that any set of nonnesting essential up arcs is precisely \linebreak the set of subarc minimal forbidden arcs by a rational sequence.
Consider a set of~$k$ \linebreak nonnesting essential up arcs~$u(a_1, b_1), \dots, u(a_k, b_k)$, ordered such that~${1 \le a_1 < \dots < a_k \le n}$ \linebreak and~${1 \le b_1 < \dots < b_k \le n}$ (these conditions are equivalent since the arcs are nonnesting).
We prove by induction on~$k$ that this set is indeed the set of subarc minimal forbidden arcs by some rational sequence of length~$n$, which moreover vanishes outside~${]a_1, b_k[}$.
For~$k = 0$, the sequence~$(0, \dots, 0)$ of length~$n$ forbids no up arc.
By induction hypothesis, there exists a rational sequence~$\b{r} \eqdef (r_1, \dots, r_n)$  which precisely forbids the arcs~$u(a_1,b_1), \dots, u(a_{k-1},b_{k-1})$ and vanishes outside~${]a_1, b_{k-1}[}$.
As the condition for preserving an arc is open, there exists a sufficiently small~${\varepsilon > 0}$ so that~$\b{r} + \varepsilon \one_{]a_k, b_k[}$ still forbids precisely~$u(a_1,b_1), \dots, u(a_{k-1},b_{k-1})$.
Finally, let~${\b{s} \eqdef \b{r} + \varepsilon \one_{]a_k, b_k[} + (1 - \varepsilon (b_k-a_k-1) - \sum_{a_k < i < b_{k-1}} r_i) \one_{\{b_k-1\}}}$.
By definition, $\b{s}$ vanishes outside~${]a_1,b_k[}$.
As~$\b{r} + \varepsilon \one_{]a_k, b_k[}$ precisely forbids~$u(a_1,b_1), \dots, u(a_{k-1},b_{k-1})$ and vanishes outside~${]a_1, b_k[}$, the sequence~$\b{s}$ also forbids these arcs and an additional subarc minimal arc of the form~$(a,b_k)$.
The choice of the weight at~$b_k-1$ ensures that~$a = a_k$.
\end{proof}

We shall now see that the concatenation of the birational sequences indeed defines an algebra structure for essential simple congruences.
We denote by~$\b{rs}$ the birational sequence of length~$m+n$ obtained by the concatenation of a birational sequence~$\b{r}$ of length~$m$ with a birational sequence~$\b{s}$ of length~$n$.

\begin{proposition}
\label{prop:product}
For any birational sequences~$\b{r}, \b{s}$ of length~$m$ and~$n$, and any congruence classes~$R$ of~$\f{S}_m/{\equiv_\b{r}}$ and~$S$ of~$\f{S}_n/{\equiv_\b{s}}$, the shifted shuffle~$R \shiftedShuffle S$ is a disjoint union of congruence classes of~$\f{S}_{m+n}/{\equiv_\b{rs}}$.
\end{proposition}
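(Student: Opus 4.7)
The plan is to show that $R \shiftedShuffle S$ is a weak-order interval $[\tau_\circ, \tau_\bullet]$ whose endpoints are the bottom and top of their respective $\equiv_\b{rs}$-classes. The union-of-classes property will then follow formally from the monotonicity of the canonical projections. First, since $R$ and $S$ are intervals in their respective weak orders (as any lattice-congruence class is), the shifted shuffle $R \shiftedShuffle S$ is itself a weak-order interval, with minimum $\tau_\circ \eqdef \projDown(R) \cdot (\projDown(S) + m)$ and maximum $\tau_\bullet \eqdef (\projUp(S) + m) \cdot \projUp(R)$. Indeed, $\tau \in R \shiftedShuffle S$ if and only if the subword of $\tau$ on the values of $[m]$ (resp.~on those of $[m+1, m+n]$) lies in $R$ (resp.~in $S$), a pair of conditions that only constrain the ``within-block'' parts of $\inv(\tau)$; since $\inv(\tau)$ decomposes as these two within-block inversion sets plus a completely free ``cross'' part, the defining constraints cut out an interval whose minimum $\tau_\circ$ has no cross inversion and whose maximum $\tau_\bullet$ has them all.

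Next, I would show that $\tau_\circ$ is the bottom of its $\equiv_\b{rs}$-class (the argument for $\tau_\bullet$ being symmetric). By the characterization of bottom elements, it suffices that every arc of $\arcDiagramDown[](\tau_\circ)$ lies in $\arcs_{\equiv_\b{rs}}$. The key auxiliary observation is that an arc of $\arcs_{m+n}$ entirely supported on $[m]$ (resp.~on $[m+1, m+n]$) belongs to $\arcs_{\equiv_\b{rs}}$ if and only if it belongs to $\arcs_{\equiv_\b{r}}$ (resp.~its $m$-shift belongs to $\arcs_{\equiv_\b{s}}$). This holds because the forbidden arcs of $\equiv_\b{r}$ form the upper ideal generated by subarc-minimal forbidden arcs, which are determined purely by the wall-sums strictly between their endpoints, and the walls of $\b{rs}$ at positions $[m]$ are exactly those of $\b{r}$. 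Since $\tau_\circ$ lists all of $[m]$ before $[m+1, m+n]$, its descents split into descents of $\projDown(R)$ and descents of $\projDown(S) + m$, yielding arcs confined to either $[m]$ or $[m+1, m+n]$; these arcs lie in $\arcs_{\equiv_\b{r}}$ or in the $m$-shift of $\arcs_{\equiv_\b{s}}$ because $\projDown(R)$ and $\projDown(S)$ are themselves bottoms of their classes, and hence in $\arcs_{\equiv_\b{rs}}$ by the auxiliary observation.

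Finally, by monotonicity of $\projDown$ and $\projUp$ as lattice morphisms, any $\tau \in [\tau_\circ, \tau_\bullet]$ satisfies $\tau_\circ = \projDown(\tau_\circ) \le \projDown(\tau)$ and $\projUp(\tau) \le \projUp(\tau_\bullet) = \tau_\bullet$, so the entire $\equiv_\b{rs}$-class $[\projDown(\tau), \projUp(\tau)]$ of $\tau$ lies inside $[\tau_\circ, \tau_\bullet]$. I expect the auxiliary observation about block-confined arcs to be the main (though mild) technical point, since one must verify that \emph{every} arc with endpoints on a single side — not merely a subarc-minimal forbidden one — is insensitive to the concatenation of the birational sequences; once this is established, the rest of the argument is essentially formal.
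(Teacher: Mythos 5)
Your proof is correct, but it takes a genuinely different route from the paper's. The paper argues locally: since congruence classes are connected in the weak order, it suffices to show that if $\pi \in \rho \shiftedShuffle \sigma$ and $\pi'$ is obtained from $\pi$ by an adjacent transposition not separating $\pi$ from its $\equiv_{\b{rs}}$-class, then $\pi' \in R \shiftedShuffle S$; this is a short case analysis on whether the swapped values straddle $m$ or both lie in one block. You instead argue globally: you identify $R \shiftedShuffle S$ as the weak-order interval $[\projDown(R)\cdot(\projDown(S)+m),\, (\projUp(S)+m)\cdot\projUp(R)]$, show via the arc-diagram characterization of class-minimal and class-maximal permutations that its two endpoints are extremal in their $\equiv_{\b{rs}}$-classes, and conclude by monotonicity of the projections $\projDown$, $\projUp$. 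Both proofs hinge on the same key observation (the paper's opening sentence, your ``auxiliary observation''): an arc supported on $[m]$ (resp.\ on $[m+n]\ssm[m]$) is forbidden by $\b{rs}$ if and only if it is forbidden by $\b{r}$ (resp.\ its unshift is forbidden by $\b{s}$), because forbidden arcs form the up-set generated by the subarc-minimal ones and these only see the walls strictly between their endpoints. The paper's argument is more elementary (it needs only connectivity of classes, not the interval structure of the shuffle nor the characterization of extremal class representatives), while yours proves slightly more: it exhibits $R \shiftedShuffle S$ explicitly as an interval between a class-bottom and a class-top, which is exactly the ``over and under'' description the paper defers to a remark. Two small points to tighten: when you say the arcs of $\arcDiagramDown[](\tau_\circ)$ are ``confined to'' a block, you should note that for a descent inside one block of a concatenation, the above/below split of the intermediate values computed in $\tau_\circ$ coincides with the one computed in $\projDown(R)$ or $\projDown(S)$ alone (true, since all values between the endpoints lie in the same block); and you implicitly use that $\projDown$ and $\projUp$ are order-preserving, which is standard for lattice congruences but not stated in the paper.
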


\begin{proof}
Observe first that the arcs forbidden by~$\b{rs}$ with both endpoints in~$[m]$ (resp.~in~$[m+n] \ssm [m]$) are precisely the arcs forbidden by~$\b{r}$ (resp.~the arcs forbidden by~$\b{s}$ shifted by~$m$).

Since the shifted shuffles of distinct permutations are disjoint, we only need to prove that, if a permutation~$\pi$ belongs to~$R \shiftedShuffle S$, then so do all the permutations of its class.
As classes are connected in the weak order, we only need to prove it for a permutation~$\pi'$ adjacent~to~$\pi$.

Let~$\rho \in R$ and~$\sigma \in S$ be such that~$\pi$ belongs to~$\rho \shiftedShuffle \sigma$.
If~$\pi$ and~$\pi'$ differ by the transposition of a value in~$[m]$ and a value in~$[m+n] \ssm [m]$, then $\pi'$ also belongs to~$\rho \shiftedShuffle \sigma$ by definition of the shifted shuffle~$\shiftedShuffle$.
If~$\pi$ and~$\pi'$ differ by the transposition of two values~$u,v$ in~$[m]$ (resp.~in~$[m+n] \ssm [m]$), then $\pi'$ belongs to~$\rho' \shiftedShuffle \sigma$ (resp.~$\rho \shiftedShuffle \sigma'$) where~$\rho$ and~$\rho'$ (resp.~$\sigma$ and~$\sigma'$) differ by the transposition of the values~$u,v$.
As the arc corresponding to the transposition between~$\pi$ in~$\pi'$ is forbidden by~$\b{rs}$, it (resp.~its shift) is also forbidden by~$\b{r}$ (resp.~$\b{s}$), so that~$\rho' \in R$ (resp.~$\sigma' \in S$).
In both cases, we obtained that~$\pi'$ also belongs to~$R \shiftedShuffle S$.
\end{proof}

It follows from \cref{prop:product} that one can define the following algebra.

\begin{definition}
A \defn{decorated separating tree} is a pair~$(T, \b{t})$, where~$\b{t}$ is a birational sequence and~$T$ is a separating tree of~$\equiv_\b{t}$.
The \defn{decorated separating tree algebra} is the algebra with basis indexed by decorated separating trees and where the product is given by~$(R, \b{r}) \cdot (S, \b{s}) = \sum_{i \in [k]} (T_i, \b{rs})$ such that the union of the linear extensions of~$T_1, \dots, T_k$ is the shifted shuffle of the linear extensions of~$R$ with the linear extensions of~$S$.
\end{definition}

\begin{remark}
\begin{enumerate}
\item Restricting to bibinary sequences (where all entries are either~$0$ or~$1$), we recover the algebraic structure defined on permutrees in~\cite{PilaudPons-permutrees}. It encompasses in particular the algebraic structure on binary trees by J.-L.~Loday and M.~Ronco~\cite{LodayRonco} and on Cambrian trees by~\cite{ChatelPilaud}.
\item Similarly to~\cite{ChatelPilaud, PilaudPons-permutrees}, we can understand the product of~\cref{prop:product} by over and under operations on decorated separating trees, and we obtain a multiplicative basis of the decorated separating tree algebra using upper sets of congruence classes. The only crucial ingredient is that all congruence classes are intervals of the weak order. Details are left to the reader.
\item In contrast to~\cite{LodayRonco, ChatelPilaud, PilaudPons-permutrees}, there is unfortunately no natural Hopf algebra structure on decorated separating trees. The natural coproduct would be the deconcatenation of permutations (with pairs of rationals attached to values). Unfortunately, this operation does not define a coalgebra on decorated separating trees. For example, the birational sequence~$((0,0), (\frac12,0), (\frac13,0), (\frac12,0), (0,0))$ has only one forbidden arc~$u(1,5)$. Hence, the permutation~$24153$ is alone in its class. So its deconcatenation gives rise to a single term~$2314 \otimes 1$ in~$\f{S}_4 \otimes \f{S}_1$. The corresponding deconcatenated birational sequence~$((0,0), (\frac12,0), (\frac12,0), (0,0))$ forbids~$u(1,4)$, so that~$2314$ is equivalent to~$2341$. Hence, the deconcatenation does not define a coalgebra.
\item Restricting to bibinary sequences, this phenomenon disappears, explaining the Hopf algebra structures in~\cite{LodayRonco, ChatelPilaud, PilaudPons-permutrees}.
\end{enumerate}
\end{remark}


\section{Quiver representation theory}
\label{sec:representationTheory}

In this section we describe how our main results apply to the combinatorics of quiver representations~\cite{AssemSimsonSkowronski, Schiffler, AdachiIyamaReiten, DemonetIyamaReadingReitenThomas} (see also the survey articles~\cite{Thomas-TamariQuiverRepresentations, Thomas-surveyTorsionClasses}).

Fix a field $K$.
A quiver $Q \eqdef (Q_0,Q_1)$ is a finite directed graph with vertices~$Q_0$ and directed edges (or arrows)~$Q_1$.
The \defn{path algebra} $KQ$ of $Q$ is the $K$-algebra whose underlying $K$-vector space has a basis indexed by all directed paths in $Q$, and where the product of two paths is their concatenation when it is a path, or zero otherwise.

Let $I$ be a two-sided ideal of the path algebra $KQ$. 
A \defn{representation} of $(Q,I)$ is an assignment of a finite dimensional $K$-vector space~$V_i$ for each vertex~$i\in Q_0$, and an assignment of a linear map~$\phi_\alpha: V_i \to V_j$ for each arrow~$\alpha$ from~$i$ to~$j$ in~$Q_1$ such that relations in $I$ are satisfied.
When the ideal~$I$ is generated by paths in~$Q$, this means that whenever a path $w$ (which can be thought of as a word in the arrows of~$Q_1$) belongs to~$I$, the composition of the corresponding linear maps is zero.

There is a $K$-linear equivalence of 
the category of representations of $(Q,I)$ over $K$ and the category of finitely generated modules over $A \eqdef KQ/I$.
This equivalence allows us to discuss morphisms, quotients and extensions of a representation (by thinking of the representation as a module over $A$).

As combinatorialists, quiver representations are of interest because certain subcategories of quiver representations, ordered by containment, form interesting and rich lattices.
In particular, we focus on subcategories called \defn{torsion classes}.
A subcategory $\mathcal{T}$ is a \defn{torsion class} if it is closed under quotients and extensions. In particular, the empty subcategory, and the subcategory of all representations are both torsion classes.
Clearly, if $\mathcal{U}$ and $\mathcal{T}$ are torsion classes, then so is their intersection.
Thus, the set of torsion classes forms a lattice which we denote by $\tors(A)$.

In fact, the lattice $\tors(A)$ shares many pleasant properties with the weak order on a finite Coxeter group.
The lattice $\tors(A)$ is semidistributive, congruence uniform, and Hasse-regular, although it is not generally known whether $\tors(A)$ can be realized as the skeleton of a simple polytope.
One notable exception is when A is the preprojective algebra $\Pi_n$ of type~$A_n$.
In this paper, we work with a simpler algebra, which we denote as $\RA_n$ following~\cite{BarnardCarrollZhu}, also called the \defn{Orpheus Algebra} in~\cite{BarnardCoelhoSimoesGunawanSchiffler}.
Importantly, the lattices of torsion classes for both the Orpheus algebra $\RA_n$ \cite[Sect.~4]{BarnardCarrollZhu} and the preprojective algebra of type $A_n$ \cite[Thm.~2.3]{Mizuno, Mizuno-arcDiagrams} are isomorphic to the weak order on $A_n$.

\begin{definition}
The quiver $Q \eqdef (Q_0,Q_1)$ of $\RA_n$ has vertices $Q_0 = \{1,2,\ldots n\}$, arrows $Q_1$ given by $\alpha_i: i \to i+1$ and $\beta_i:i+1\to i$ for all $i\in [n-1]$, and two-sided ideal $I$ generated by all cycles of the form $\alpha_i\beta_{i} = \beta_i\alpha_i$.
\end{definition}

In \cite{DemonetIyamaReadingReitenThomas}, the authors study the lattice of congruences of the lattice~$\tors A$ of torsion classes, where~$A$ is any finite-dimensional algebra.
Their main result gives a correspondence between quotients $A/I'$ of $A$ by an ideal $I'$ and certain \defn{algebraic} congruences of $\tors A$.
In the particular case where $A$ is the Orpheus algebra~$\RA_n$, these algebraic congruences actually characterize simple congruences.

Now that we know that every simple congruence $\equiv$ of the weak order on $A_n$ is an algebraic congruence of the algebra $\RA_n$ by some ideal $I_\equiv$, it is natural to ask for a set of minimal generators~$w$ of $I_{\equiv}$, in the sense that no other generator contains $w$ as a consecutive subword.
Indeed, these generators can be read off as minimal arcs (in the sense of subarcs).

\enlargethispage{.3cm}
For each pair~$1 \le i < j \le n$, we earlier defined the up arc~$u(i,j) \eqdef (i, j, {]i,j[}, \varnothing)$ and the down arc~$u(i,j) \eqdef (i, j, \varnothing, {]i,j[})$, and we now define the up path~$up(i,j) \eqdef \alpha_i \dots \alpha_{j-1}$ and  the down path~$dp(i,j) \eqdef \beta_{j-1} \dots \beta_i$ in the quiver of~$\RA_{n-1}$.

\begin{theorem}
Consider any simple congruence~$\equiv$ of the weak order on $\f{S}_n$, and let~$\arcs_\equiv$ be its arc ideal and $I_\equiv$ be its path algebra ideal of~$\RA_n$. 
Then $u(i,j)$ (resp.~$d(i,j)$) is a subarc minimal arc of~$\arcs \ssm \arcs_\equiv$ if and only if~$up(i,j)$ (resp.~$dp(i,j)$) is a minimal generator of $I_{\equiv}$.
\end{theorem}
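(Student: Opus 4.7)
The plan is to combine the correspondence between simple congruences and two-sided ideals of the Orpheus algebra established in~\cite[Sect.~6.3]{DemonetIyamaReadingReitenThomas} with an explicit description of the bricks of~$\RA_n$ as string modules. First, I would use~\cite[Sect.~6.3]{DemonetIyamaReadingReitenThomas} to identify the ideal $I_\equiv$ as the annihilator of the surviving bricks, namely
\[
I_\equiv = \bigcap_{\arc \in \arcs_\equiv} \mathrm{Ann}_{\RA_n}(M_\arc),
\]
where $M_\arc$ denotes the brick of $\RA_n$ associated to the arc $\arc$ under the bijection between arcs and join-irreducibles of~$\tors(\RA_n)$.

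Second, I would pin down the action of monomial paths on these bricks. The brick $M_\arc$ for $\arc = (a, b, A, B)$ is a string module with a one-dimensional $K$-vector space at each vertex $k \in [a, b]$, and the arrows $\alpha_k, \beta_k$ act as identity or zero in a pattern encoding the geometry of the arc. The key computation is that $up(i, j) = \alpha_i \cdots \alpha_{j-1}$ acts nontrivially on $M_\arc$ if and only if $a \le i < j \le b$ and $]i, j[ \subseteq A$, which is exactly the condition that $u(i, j)$ is a subarc of $\arc$; the analogous statement holds for $dp(i, j)$ and $d(i, j)$. Since $\arcs_\equiv$ is a lower set of the subarc poset, this yields the equivalence $up(i, j) \in I_\equiv$ if and only if $u(i, j) \notin \arcs_\equiv$, and symmetrically for $dp(i, j)$ and $d(i, j)$: indeed if $u(i, j) \in \arcs_\equiv$ then $up(i, j)$ acts nontrivially on $M_{u(i, j)}$, while if $u(i, j) \notin \arcs_\equiv$ then no $\arc \in \arcs_\equiv$ can have $u(i, j)$ as a subarc by the lower set property.

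Third, I would analyze the minimal monomial generators of $I_\equiv$ inside $\RA_n$. The defining relations $\alpha_k \beta_k = 0 = \beta_k \alpha_k$ imply that the nonzero paths in $\RA_n$ are precisely the idempotents, the up paths $up(i', j')$, and the down paths $dp(i', j')$; moreover the two-sided ideal of $\RA_n$ generated by $up(i, j)$ equals $\mathrm{span}_K\{ up(i', j') : i' \le i < j \le j' \}$, since any extension by a~$\beta$ vanishes and up paths never produce down paths under multiplication. Consequently $up(i, j)$ is a minimal generator of $I_\equiv$ if and only if $up(i, j) \in I_\equiv$ and no shorter up path $up(i', j')$ with $i \le i' < j' \le j$ and $(i', j') \ne (i, j)$ lies in $I_\equiv$; by the previous paragraph this is equivalent to $u(i, j) \notin \arcs_\equiv$ while every strict subarc of $u(i, j)$ lies in $\arcs_\equiv$, which is precisely subarc minimality. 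The case of down paths is symmetric.

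The main obstacle is verifying Step 2: pinning down the precise arc-to-brick dictionary and computing the action of $up(i, j)$ and $dp(i, j)$ on the corresponding string modules of $\RA_n$. This is a combinatorial computation internal to the Orpheus algebra; once established, the remaining steps follow formally from the subarc lower-set property and the elementary structure of two-sided monomial ideals in~$\RA_n$.
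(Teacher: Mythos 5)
The paper gives no proof of this theorem: it is stated as a direct consequence of the framework of \cite[Sect.~6.3]{DemonetIyamaReadingReitenThomas}, so there is nothing to compare your argument against line by line. Your strategy --- identify $I_\equiv$ via the brick labelling, compute which monomial paths annihilate which bricks, and then translate subarc minimality into minimality of monomial generators --- is exactly the intended route, and Steps 1 and 3 are essentially correct as written. In particular, your observation that every subarc of an up arc is again an up arc is what makes the minimality translation work, and your description of the nonzero paths of $\RA_n$ (trivial paths, up paths, down paths, since any change of direction creates a $2$-cycle $\alpha_k\beta_k$ or $\beta_k\alpha_k$) correctly identifies the two-sided ideal generated by $up(i,j)$.

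That said, you have correctly flagged the one genuine gap, and it is not a formality. First, the identity $I_\equiv = \bigcap_{\arc \in \arcs_\equiv} \mathrm{Ann}_{\RA_n}(M_\arc)$ requires care: the map from ideals to algebraic congruences is not injective, so you must say which ideal the theorem means (the maximal one inducing $\equiv$) and why, for a \emph{simple} congruence, this intersection of annihilators actually kills every brick outside $\arcs_\equiv$ --- this is precisely where simplicity enters, since an arbitrary congruence of the weak order is not algebraic for $\RA_n$. Second, the string-module computation in Step 2 is the crux and is asserted rather than proved; it also contains an off-by-one that you must resolve before the endpoints match up. The brick attached to the arc $(a,b,A,B)$ of $\arcs_n$ is supported on the vertices $[a,b-1]$ of the quiver of $\RA_{n-1}$ (its dimension vector is the positive root $e_a + \dots + e_{b-1}$ of $A_{n-1}$), not on $[a,b]$, and the shape of the string at an interior vertex is governed by whether the corresponding interior point lies in $A$ or in $B$. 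You therefore need to check carefully that the path $\alpha_i\cdots\alpha_{j-1}$, which runs from vertex $i$ to vertex $j$, acts nontrivially on $M_{(a,b,A,B)}$ exactly when $u(i,j)$ is a subarc of $(a,b,A,B)$; with the supports as above this forces a shift somewhere (either in the indexing of the paths $up(i,j)$, or in the arc-to-brick dictionary), and the paper itself wavers between $\RA_n$ and $\RA_{n-1}$ on this point. Until that dictionary is pinned down and the identity/zero pattern of the arrows on $M_\arc$ is written out, the central equivalence of Step 2 remains unproved, and with it the theorem.
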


\begin{corollary}
For any simple congruence~$\equiv$ of the weak order on~$\f{S}_n$, the $\b{g}$-vector fan of~$\RA_{n-1}/I_\equiv$ is (up to a change of basis) the intersection of the hyperplane~$\set{\b{x} \in \R^n}{\sum_{i=1}^n x_i = 0}$ with the fan of~$\R^n$ with cones~$C(S) \eqdef \set{\b{x} \in \R^n}{x_i \le x_j \text{ for } i \to j \text{ in } S}$ defined by the $\equiv$-admissible Schröder separating trees~$T$ (and maximal cones correspond to $\equiv$-admissible separating trees).
\end{corollary}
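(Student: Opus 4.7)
The plan is to derive this corollary as a combination of three ingredients: the preceding theorem identifying the generators of~$I_\equiv$, the general correspondence between g-vector fans of quotients of~$\RA_{n-1}$ and quotient fans of simple congruences (as established in~\cite{DemonetIyamaReadingReitenThomas}, building on~\cite{Mizuno, Mizuno-arcDiagrams}), and the combinatorial description of preposets of simple congruences given by \cref{prop:admissibleSchroderSeparatingTrees}.

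The first step is to invoke~\cite{DemonetIyamaReadingReitenThomas}: since~$\equiv$ is a simple congruence, it corresponds to an algebraic congruence of~$\tors(\RA_{n-1})$, and the preceding theorem shows that its associated algebra ideal is exactly~$I_\equiv$. One then uses that the g-vector fan of~$\RA_{n-1}/I_\equiv$ coincides, after the standard change of basis sending the classes of simple modules~$[S_i]$ to the simple roots~$\b{e}_{i+1} - \b{e}_i$, with the intersection of the quotient fan~$\quotientFan$ with the hyperplane~$\set{\b{x} \in \R^n}{\sum_i x_i = 0}$. For the trivial congruence this is the statement that the g-vector fan of~$\RA_{n-1}$ is the type~$A_{n-1}$ braid fan, and for arbitrary simple~$\equiv$ one obtains the quotient fan by gluing g-vector chambers according to the algebraic congruence.

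The second step is purely combinatorial. By \cref{subsec:quotientFansQuotientopes}, the cones of~$\quotientFan$ are precisely the preposet cones~$\Cone({\preccurlyeq}) = \set{\b{x} \in \R^n}{x_i \le x_j \text{ for all } i \preccurlyeq j}$ as~$\preccurlyeq$ ranges over the $\equiv$-preposets. A direct transitivity argument shows that this preposet cone coincides with the cone~$C(S)$ cut out only by the cover relations of the Hasse diagram~$S$ of~$\preccurlyeq$: indeed, any relation~$i \preccurlyeq j$ is a concatenation of cover relations, and the corresponding chain of inequalities~$x_{i_0} \le x_{i_1} \le \dots \le x_{i_k}$ in~$C(S)$ implies~$x_i \le x_j$. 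By \cref{prop:admissibleSchroderSeparatingTrees}, these Hasse diagrams are exactly the $\equiv$-admissible Schröder separating trees, and by \cref{prop:admissibleSeparatingTrees} the maximal cones (coming from $\equiv$-posets) correspond to $\equiv$-admissible separating trees.

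The main anticipated obstacle is pinning down the change of basis and making precise the identification of the g-vector fan of~$\RA_{n-1}/I_\equiv$ with the restriction of~$\quotientFan$ to the hyperplane. This requires carefully extracting from~\cite{DemonetIyamaReadingReitenThomas} the explicit correspondence between the simple congruence~$\equiv$, the ideal~$I_\equiv$, and the algebraic congruence of~$\tors(\RA_{n-1})$, as well as the compatibility of g-vectors with the braid-arrangement coordinates; the rest of the proof is essentially bookkeeping once this identification is in place.
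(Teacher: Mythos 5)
The paper states this corollary without any proof at all (it is presented as an immediate application of the preceding theorem together with the machinery of \cite{DemonetIyamaReadingReitenThomas}), so there is no official argument to compare against; I can only assess your reconstruction on its own terms. Your outline is the natural and almost certainly the intended one: (a) the theorem identifies the ideal $I_\equiv$ whose minimal generators are the paths $up(i,j)$, $dp(i,j)$ corresponding to the subarc-minimal non-arcs; (b) the DIRRT correspondence between quotient algebras and algebraic congruences of $\tors(\RA_{n-1})$, together with the fact that the $\b{g}$-vector fan of $\RA_{n-1}$ is the type $A_{n-1}$ Coxeter fan under $[S_i] \mapsto \b{e}_{i+1}-\b{e}_i$, identifies the $\b{g}$-vector fan of the quotient algebra with the quotient fan $\quotientFan$ restricted to the sum-zero hyperplane; (c) \cref{prop:admissibleSchroderSeparatingTrees} and \cref{prop:admissibleSeparatingTrees} convert the cones of $\quotientFan$ into the cones $C(S)$ of $\equiv$-admissible (Schröder) separating trees, using that a preposet cone is cut out by the cover relations of its Hasse diagram alone. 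Step (c) is complete and correct.

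The one substantive gap is that step (b) is the entire mathematical content of the corollary, and you explicitly defer it (``the main anticipated obstacle'') rather than resolve it. Concretely, you need: that $I_\equiv$ lies in the radical so that $K_0$ of the projectives is unchanged and the $\b{g}$-vector fans of $\RA_{n-1}$ and $\RA_{n-1}/I_\equiv$ live in the same lattice; that $\RA_{n-1}/I_\equiv$ is $\tau$-tilting finite so its $\b{g}$-vector fan is complete with maximal cones indexed by functorially finite torsion classes; and that the algebraic congruence attached to $I_\equiv$ by \cite{DemonetIyamaReadingReitenThomas} is precisely $\equiv$, so that gluing the chambers of the Coxeter fan along this congruence reproduces $\quotientFan$. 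These are all available in the cited literature for the preprojective/Orpheus algebra in type $A$, but as written your proposal is a correct outline with its key identification left as a pointer rather than a finished proof. Since the paper itself treats the corollary exactly this way, this is a forgivable level of detail, but you should be aware that the ``bookkeeping'' you describe is not where the difficulty lies.
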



\addtocontents{toc}{\vspace{.1cm}}
\section*{Acknowledgments}

VP is grateful to Pierre-Guy Plamondon and Yann Palu for various working sessions which led to the presentation of \cref{sec:representationTheory}.


\bibliographystyle{alpha}
\bibliography{separatingTrees}
\label{sec:biblio}

\end{document}